\newcommand{\N}{\ensuremath{\mathbb{N}}}
\newtheorem{thm}{Theorem}[section]
\newtheorem{cor}[thm]{Corollary}
\newtheorem{lemma}[thm]{Lemma}
\newtheorem{prop}[thm]{Proposition}
\theoremstyle{remark}
\theoremstyle{definition}
\newtheorem{defin}{Definition}[section]
\newcommand{\on}{\operatorname}
\begin{document}

\title{Tensor products of $n$-complete algebras}
\author{Andrea Pasquali}
\address{Dept. of Mathematics, Uppsala University, P.O. Box 480, 751 06 Uppsala, Sweden}
\email{andrea.pasquali@math.uu.se}
\maketitle

\begin{abstract}
If $A$ and $B$ are $n$- and $m$-representation finite $k$-algebras, then their tensor product $\Lambda = A\otimes_k B$ is not in general 
$(n+m)$-representation finite. However, we prove that if $A$ and $B$ are acyclic and satisfy the weaker assumption of $n$- and $m$-completeness, then $\Lambda$ is $(n+m)$-complete.
This mirrors the fact that taking higher Auslander algebra does not preserve $d$-representation finiteness in general, but it does preserve $d$-completeness.
As a corollary, we get the necessary condition for $\Lambda$ to be $(n+m)$-representation finite which was found by Herschend and Iyama by using a certain twisted fractionally Calabi-Yau property.
\end{abstract}

\section{Introduction}
Higher Auslander-Reiten theory was developed in a series of papers \cite{Iya07}, \cite{Iya07b}, \cite{Iya08} as a tool to study module categories of finite-dimensional algebras.
The idea is to replace all the homological notions in classical Auslander-Reiten theory with higher-dimensional analogs. 
Some early results can be found in \cite{IO11}, \cite{HI11b}. 
This approach has been fruitful in the context of noncommutative algebraic geometry, see for instance
\cite{AIR15}, \cite{HIO14}, \cite{HIMO14}.
Higher Auslander-Reiten theory is also deeply tied with $d$-homological algebra (\cite{GKO13}, \cite{Jas16}, \cite{Jor15}). A presentation of the theory from this point of view can be found in \cite{JK16}.

In this setting, $d$-representation finite algebras were introduced in \cite{Iya11} as a generalisation of hereditary representation finite algebras. 
They are algebras of global dimension at most $d$ that have a $d$-cluster tilting module $M$.
The category $\on{add}M$ has nice homological properties and behaves in many ways like the module category of a hereditary representation finite algebra.
While classification of $d$-representation finite algebras seems far from being achieved, it makes sense to look for examples, and to try to understand how $d$-representation finiteness behaves with respect 
to reasonable operations. 
Notice that in this setting we have more freedom than in the hereditary case, since we are allowed to increase the global dimension and still fall within the scope of the theory.

For instance, in \cite{Iya11} Iyama investigates whether the endomorphism algebra of the $d$-cluster tilting module (called the higher Auslander algebra) is $(d+1)$-representation finite. 
This turns out to be false in general, but a necessary and sufficient condition is given: the only case where it is true is within the tower of iterated higher Auslander algebras of the upper triangular matrix algebra,
so this construction gives only a specific family of examples. 
On the other hand, in the same paper the weaker notion of $d$-complete algebra is introduced and studied. A $d$-complete algebra is an algebra of global dimension at most $d$ that has a module which is 
$d$-cluster tilting in a suitable exact subcategory of the module category. 
It turns out that this weaker notion is preserved under taking higher Auslander algebras, thereby producing many examples of $d$-complete algebras for any $d$. 

Another operation one might investigate is that of taking tensor products over the base field $k$. Indeed, if $k$ is perfect then $\on{gl.dim}A\otimes_k B = \on{gl.dim}A+\on{gl.dim}B$, 
so it makes sense to ask whether the tensor product of an $n$- and an $m$-representation finite algebras is $(n+m)$-representation finite.
This is false in general, and in \cite{HI11} Herschend and Iyama give a necessary and sufficient condition ($l$-homogeneity) for it to be true.

In this paper we prove that the same weaker notion of $d$-completeness which is used in \cite{Iya11} is preserved under tensor products, under the assumption 
of acyclicity. Namely, if $A$ is $n$-complete and acyclic and $B$ is $m$-complete and acyclic, 
then $A\otimes_k B$ is $(n+m)$-complete and acyclic. 
If we assume that $A$ and $B$ are $l$-homogeneous, we recover the result by Herschend and Iyama.
This gives a new way of producing $d$-complete algebras for any $d$. 

The proof we give is structured as follows. We prove that over the tensor product there are $(n+m)$-almost split sequences (using the same construction as in \cite{Pas17}), and moreover
that injective modules have source sequences. Then we use these sequences, combined with the assumption of acyclicity, to prove that the module $T$ in the definition of $(n+m)$-completeness
is tilting. By \cite[Theorem 2.2(b)]{Iya11}, the existence of the above sequences in $T^\perp$ is equivalent to $M$ being $(n+m)$-cluster tilting in $T^\perp$, which is the key point of $(n+m)$-completeness.

In Sections 2 we lay down notation, conventions, and preliminary definitions. Section 3 contains the statement of our main result. 
Section 4 contains the results about $d$-almost split sequences and tensor products which we want to use. Section 5 is dedicated to proving the main theorem, which amounts to checking
that the tensor product satisfies the defining properties of $(n+m)$-complete algebras.
In Section 6 we present some examples.

\section{Notation and conventions}  
Throughout this paper, $k$ denotes a fixed perfect field. All algebras are associative, unital, and finite dimensional over $k$. For an algebra $\Lambda$, $\on{mod}\Lambda$ (respectively
$\Lambda\on{mod}$) denotes the 
category of finitely generated right (left) $\Lambda$-modules. We denote by $D$ the duality 
$D = \on{Hom}_k(-, k)$ between $\on{mod}\Lambda $ and $\Lambda\on{mod}$ (in both directions). 
Subcategories are always assumed to be full and closed under isomorphisms, finite direct sums and summands. 
For $M\in\on{mod}\Lambda$, we denote by $\on{add}M$ the subcategory of $\on{mod}\Lambda$ whose objects are all modules isomorphic to finite direct sums of summands of $M$.
We write $\on{rad}_\Lambda(-,-)$ for the subfunctor of $\on{Hom}_\Lambda(-,-)$ defined by
\begin{align*}
  \on{rad}_{\Lambda}(X, Y) = \left\{ f\in\on{Hom}_{\Lambda}(X, Y)\ |\ \on{id}_X-g\circ f \text{ is invertible } \forall g\in \on{Hom}_\Lambda(Y, X) \right\}.
\end{align*}
Moreover, for $X, Y\in\on{mod}\Lambda$, we write $\on{top}_\Lambda(X ,Y) = \on{Hom}_\Lambda(X, Y) / \on{rad}_{\Lambda}(X, Y)$.
We often write $\on{Hom}$ instead of $\on{Hom}_{\Lambda}$ and similarly for $\on{rad}$ and $\on{top}$ when the context allows it.
We denote by $\mathcal D^b(\Lambda)$ the bounded derived category of $\on{mod}\Lambda$. For a subcategory $\mathcal C$ of $\mathcal D^b(\Lambda)$, we denote by $\on{thick}\mathcal C$ the smallest triangulated subcategory of
$\mathcal D^b(\Lambda)$ containing $\mathcal C$. If $\mathcal C= \on{add}M$ for some $M\in\on{mod}\Lambda\subseteq \mathcal D^b(\Lambda)$, we write $\on{thick}M = \on{thick}(\on{add}M)$.
All tensor products are over $k$, even when the specification is omitted to simplify the notation.

Throughout this section, let $\on{gl.dim}\Lambda\leq d$. 
Then we can define the \emph{higher Auslander-Reiten translations} by 
\begin{align*}
  \tau_d &= D\on{Ext}^d_{\Lambda}(-, \Lambda): \on{mod}\Lambda \to \on{mod}\Lambda\\
  \tau_d^- &= \on{Ext}^d_{\Lambda^{op}}(D-, \Lambda): \on{mod}\Lambda \to \on{mod}\Lambda.
\end{align*}
We are interested in categories associated to tilting modules. 
\begin{defin}
  A $\Lambda$-module $T$ is \emph{tilting} if the following conditions are satisfied:
  \begin{enumerate}
    \item $\on{Ext}^i(T, T) = 0$ for all $i\neq 0$,
    \item there is an exact sequence $0\to \Lambda\to T_0\to \cdots \to T_m\to 0$ for some $m$ with $T_i \in \on{add}T$ for all $i$.
  \end{enumerate}
\end{defin}
The second condition in the definition can be replaced by $$\on{thick}T = \mathcal D^b(\Lambda).$$
For a tilting module $T$, we have an exact subcategory of $\on{mod}\Lambda$
\begin{align*}
  T^\perp = \left\{ X\in \on{mod}\Lambda\ | \ \on{Ext}^i (T, X) = 0 \text{ for every } i\neq 0 \right\}
\end{align*}
We are interested in $d$-cluster tilting subcategories of $T^\perp$.
\begin{defin}
  Let $T$ be a tilting module. A subcategory $\mathcal C$ of $T^\perp$ is called\emph{ $d$-cluster tilting} if
  \begin{align*}
    \mathcal C &= \left\{ X\in T^\perp \ |\ \on{Ext}^i(\mathcal C, X) = 0 \text{ for every }  0<i<d \right\} = \\
    	&= \left\{ X\in T^\perp \ |\ \on{Ext}^i(X, \mathcal C) = 0 \text{ for every }  0<i<d \right\}.
  \end{align*}
\end{defin}
We follow \cite[Definition 1.11]{Iya11} and define the following subcategories of $\on{mod}\Lambda$:
\begin{enumerate}
  \item $\mathcal M  = \mathcal M(\Lambda)= \on{add}\left\{ \tau_d^iD\Lambda\ |\ i\geq 0 \right\}$,
  \item $\mathcal P = \left\{ X\in \mathcal M\ |\  \tau_dX = 0\right\}$, 
  \item $\mathcal M_P = \left\{  X\in \mathcal M\ |\  X \text{ has no nonzero summands in } \mathcal P \right\}$.
  \item $\mathcal M_I = \left\{  X\in \mathcal M\ |\  X \text{ has no nonzero summands in } \on{add}D\Lambda \right\}$.
\end{enumerate}
Let $T_\Lambda$ be a basic module such that $\on{add}T_\Lambda = \mathcal P$.

\begin{defin}
  \label{def:ncomplete}
  An algebra $\Lambda$ is \emph{$d$-complete} if the following conditions hold:
  \begin{enumerate}
    \item [$(A_d)$] $T_\Lambda$ is a tilting module.
    \item [$(B_d)$] $\mathcal M$ is a $d$-cluster tilting subcategory of $T^\perp_\Lambda$,
    \item [$(C_d)$] $\on{Ext}^i (\mathcal M_P, \Lambda) = 0 $ for every $0<i<d$. 
  \end{enumerate}
\end{defin}
Note that condition $(A_d) $ implies that $\tau_d^l = 0$ for large $l$ (\cite[Proposition 1.12(d) and 1.3(c)]{Iya11}).
Note moreover that if $\Lambda$ is $d$-complete then since $\on{gl.dim}\Lambda\leq d$ it follows that $\on{gl.dim}\Lambda\in \left\{ 0, d \right\}$.
This is a generalisation of the notion of $d$-representation finiteness which we use in \cite{Pas17}. 
Without loss of generality, from now on we assume that $\Lambda$ is basic. We write $T$ for $T_\Lambda$ when the context allows it.
Then \cite[Proposition 1.13]{Iya11}
says that ``$d$-representation finite'' is the same as ``$d$-complete with $T = \Lambda$''. 

If $\Lambda$ is $d$-complete, then for every indecomposable injective $I_i$ there is a unique $l_i\in \N$ such that $\tau_d^{l_i-1} I_i \in \mathcal P$, 
and \begin{align*}
  T_\Lambda = \bigoplus_i \tau_d ^{l_i-1} I_i.
\end{align*}

\begin{defin}[\cite{HI11}]
  Let $\Lambda$ be a $k$-algebra of global dimension $d$ such that $\tau_d^l=0$ for $l$ sufficiently large.
  We say that $\Lambda$ is \emph{$l$-homogeneous} if $\tau_d^{l-1}D\Lambda = T_\Lambda$.
\end{defin}

If $\Lambda$ is $d$-complete, this means that $l_i = l$ for every $i$.

Our main result is proved only for acyclic algebras, let us define what we mean by that.
Let $M\in \on{mod}\Lambda$, and let $\mathcal C = \on{add}M$. 
We want to define a preorder on the indecomposable objects $\on{ind}\mathcal C$ of $\mathcal C$. 
For $X, Y\in \on{ind}\mathcal C$, we say $X<Y$ if there is a sequence $(X=X_0, X_1, \dots, X_{m+1} = Y)$ for some $m\geq 0$, such that
$X_i\in \on{ind}\mathcal C$ and $\on{rad}_\Lambda(X_i, X_{i+1})\neq 0$ for all $i$. This defines a transitive relation $<$ on $\on{ind}\mathcal C$.
Notice that we can replace $\on{rad}_\Lambda(X_i, X_{i+1})\neq 0$ with $\on{rad}_{\mathcal C}(X_i, X_{i+1})\neq 0$ above.
\begin{defin}
	The category $\mathcal C$ is \emph{directed} if $<$ is antisymmetric, that is if no indecomposable module $X\in \mathcal C$ satisfies $X<X$.
	If $\mathcal C = \on{add}M$, we say that $M$ is directed. We call the algebra $\Lambda$ \emph{acyclic} if $\Lambda_\Lambda$ is directed.
\end{defin}

\section{Main result}
We now consider the case where $A$ is $n$-complete, $B$ is $m$-complete, and $\Lambda = A\otimes_k B$.
Since $k$ is perfect, we have that $\on{gl.dim} \Lambda = \on{gl.dim} A+ \on{gl.dim} B$.
Moreover, by the K\"unneth formula we have $\tau_{n+m} X\otimes Y= \tau_n X\otimes \tau_m Y$. Since indecomposable
injective $\Lambda$-modules are of the form $X\otimes Y$, it follows that all indecomposable modules in $\mathcal M$ are of this form.
Our main result is the following:
\begin{thm}
  \label{thm:main}
  Let $A, B$ be $n$- respectively $m$-complete acyclic $k$-algebras, with $k$ perfect. Then $A\otimes_k B$ is $(n+m)$-complete and acyclic.
\end{thm}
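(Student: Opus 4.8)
The plan is to verify directly the three defining conditions $(A_{n+m})$, $(B_{n+m})$, $(C_{n+m})$ of Definition \ref{def:ncomplete} for $\Lambda = A\otimes_k B$, together with acyclicity. Acyclicity is the cheapest part: the Gabriel quiver of $A\otimes_k B$ is the box product of the quivers of $A$ and $B$, and a box product of two acyclic quivers is again acyclic, so this needs no homological input. The two genuinely useful structural facts are already recorded in the excerpt: by the K\"unneth formula $\tau_{n+m}(X\otimes Y)=\tau_n X\otimes\tau_m Y$, so every indecomposable of $\mathcal M(\Lambda)$ has the form $X\otimes Y$ with $X\in\mathcal M(A)$ and $Y\in\mathcal M(B)$; and, since $k$ is perfect, the K\"unneth formula for extension groups gives $\on{Ext}^j_\Lambda(X\otimes Y,X'\otimes Y')=\bigoplus_{p+q=j}\on{Ext}^p_A(X,X')\otimes\on{Ext}^q_B(Y,Y')$. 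Because $A$ and $B$ have global dimension $n$ and $m$ and their categories $\mathcal M$ are cluster tilting, the only nonzero groups $\on{Ext}^p_A(\mathcal M(A),\mathcal M(A))$ occur for $p\in\{0,n\}$ and similarly for $B$ with $q\in\{0,m\}$; hence $\on{Ext}^j_\Lambda(\mathcal M,\mathcal M)$ is concentrated in $j\in\{0,n,m,n+m\}$, and the whole problem is to show that the cross terms in degrees $n$, $m$ and $n+m$ vanish on the relevant modules.

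First I would produce the $(n+m)$-almost split sequences over $\Lambda$, following the construction of \cite{Pas17}: starting from an $n$-almost split sequence ending at $X$ over $A$ and an $m$-almost split sequence ending at $Y$ over $B$, I would form the associated total complex to obtain an $(n+m)$-almost split sequence ending at $X\otimes Y$, whose source term is $\tau_n X\otimes\tau_m Y=\tau_{n+m}(X\otimes Y)$ exactly as predicted by K\"unneth. The same tensoring procedure applied to the source sequences of the injective modules over $A$ and $B$ yields source sequences of the indecomposable injective $\Lambda$-modules $I\otimes J$. The main bookkeeping here is that the lifetime of $I\otimes J$ under $\tau_{n+m}$ is the minimum of the lifetimes of $I$ and $J$, so the summands of $T_\Lambda$ are the last surviving modules $\tau_{n+m}^{\,l-1}(I\otimes J)$; in particular $T_\Lambda$ is in general strictly larger than $T_A\otimes T_B$, and equals it precisely in the $l$-homogeneous case, which is how the Herschend--Iyama condition will be recovered.

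The heart of the argument is condition $(A_{n+m})$, that $T_\Lambda$ is tilting. For the generation condition I would use that injectives generate, $\mathcal D^b(\Lambda)=\on{thick}(D\Lambda)$, and then peel off the source sequences constructed above to express each $I\otimes J$ inside $\on{thick}T_\Lambda$, which also provides the required finite coresolution $0\to\Lambda\to T_0\to\cdots\to T_N\to 0$. The self-orthogonality $\on{Ext}^j_\Lambda(T_\Lambda,T_\Lambda)=0$ for $j\neq0$ is where acyclicity is indispensable: by the paragraph above the only possibly nonzero contributions sit in degrees $j\in\{n,m,n+m\}$ and come from cross terms such as $\on{Ext}^n_A(-,-)\otimes\on{Hom}_B(-,-)$, and one must show these vanish on the mixed surviving tensor powers that make up $T_\Lambda$. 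I expect to prove this by exploiting the directedness coming from acyclicity: the absence of oriented cycles forces a compatibility between the $\tau$-orbit positions of the two tensor factors, so that whenever a top-degree extension group over one factor is nonzero the corresponding $\on{Hom}$ over the other factor is forced to vanish, and vice versa. This is the step I expect to be the main obstacle, since it is exactly the point at which the clean $l$-homogeneous bookkeeping breaks down and one must argue with the finer orbit combinatorics.

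Once $(A_{n+m})$ is in hand, the rest should fall into place. Having checked that the $(n+m)$-almost split sequences and the source sequences all live in $T_\Lambda^\perp$, I would invoke \cite[Theorem 2.2(b)]{Iya11} to conclude that $\mathcal M$ is an $(n+m)$-cluster tilting subcategory of $T_\Lambda^\perp$, which is condition $(B_{n+m})$. Finally, condition $(C_{n+m})$, namely $\on{Ext}^i_\Lambda(\mathcal M_P,\Lambda)=0$ for $0<i<n+m$, I would deduce from the extension K\"unneth formula together with the corresponding vanishing conditions $(C_n)$ and $(C_m)$ for $A$ and $B$, writing $\Lambda=A\otimes B$ and decomposing the indecomposable summands of $\mathcal M_P(\Lambda)$ as tensor products. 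Collecting $(A_{n+m})$, $(B_{n+m})$, $(C_{n+m})$ and acyclicity gives that $\Lambda$ is $(n+m)$-complete and acyclic.
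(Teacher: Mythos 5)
Your outline agrees with the paper on the easy parts (acyclicity, condition $(C_{n+m})$ via the K\"unneth formula for $\on{Ext}$, the construction of $(n+m)$-almost split sequences and of source sequences for injectives by tensoring, and the appeal to \cite[Theorem 2.2(b)]{Iya11} for condition $(B_{n+m})$). But the heart of the theorem is condition $(A_{n+m})$, and there your plan has two genuine gaps. First, the generation step: you propose to ``peel off the source sequences'' of the injectives $I\otimes J$ to land in $\on{thick}T_\Lambda$. Source sequences are not exact sequences (already for a semisimple algebra the source sequence of an injective is $I\to 0$), so they give no containment of $I\otimes J$ in a thick subcategory generated by their other terms. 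What the paper actually uses are the \emph{exact} $(n+m)$-almost split sequences $0\to\tau_{n+m}X\to C_{n+m}\to\cdots\to C_1\to X\to 0$, which let one trade $X$ for $\tau_{n+m}X$ and the middle terms inside $\on{thick}(-)$; and this is precisely where acyclicity is indispensable, because one needs a height function on $\on{ind}\mathcal M$ (available only when $\mathcal M$ is directed, via Lemmas \ref{lem:product} and \ref{lem:wellsuited}) to organize the induction showing $\on{thick}(E^{i+1}D\Lambda)=\on{thick}(E^iD\Lambda)=\mathcal D^b(\Lambda)$.

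Second, the self-orthogonality of $T_\Lambda$, which you yourself flag as ``the main obstacle,'' is left as an unproven claim that directedness forces the top-degree cross terms to vanish. This is not how the argument goes, and acyclicity is in fact \emph{not} used there. The middle-degree terms $\on{Ext}^n_A\otimes\on{Hom}_B$ and $\on{Hom}_A\otimes\on{Ext}^m_B$ are already killed by Proposition \ref{prop:rigid} (K\"unneth against $\Lambda$ plus \cite[Proposition 2.5(a)]{Iya11}), so only $\on{Ext}^{n+m}$ is at stake; the paper handles it by induction along the chain $D\Lambda, ED\Lambda,\dots, E^lD\Lambda=T_\Lambda$ (Lemma \ref{lem:rigid}): summands in $\on{add}T_\Lambda$ have one tensor factor relatively projective, which kills $\on{Ext}^n_A\otimes\on{Ext}^m_B$, and for the remaining summands one transports the vanishing back one step via $\on{Ext}^{n+m}(\tau_{n+m}S_2,Y)\cong\on{Ext}^{n+m}(S_2,\tau_{n+m}^-Y)$. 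Without some such mechanism your sketch does not establish $(A_{n+m})$, and everything downstream (including $(B_{n+m})$, which needs $T_\Lambda$ tilting and $\mathcal M\subseteq T_\Lambda^\perp$) is left hanging.
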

Note that as far as the author is aware, there are no known examples of $d$-complete algebras which are not acyclic (this is Question 5.9 in \cite{HIO14}).

This result can be applied inductively to construct $d$-complete algebras starting for example from hereditary representation finite algebras and taking tensor products. 
In this sense, it is similar in spirit to \cite[Theorem 1.14 and Corollary 1.16]{Iya11}, where Iyama constructs towers of $d$-complete algebras (with increasing $d$) by taking 
iterated higher Auslander algebras.
The algebra $A\otimes B$ is almost never $(n+m)$-representation finite by the characterisation given by Herschend and Iyama in \cite{HI11}. Our result specialises to their characterisation 
in the acyclic case:

\begin{cor}
  \label{cor:recover}
  Let $A, B$ be $n$- respectively $m$-representation finite acyclic $k$-algebras, with $k$ perfect. Then the following are equivalent:
  \begin{enumerate}
    \item $ A\otimes_k B $ is $(n+m)$-representation finite.
  \item $\exists l\in \N $ such that $ A$ and $B$ are $l$-homogeneous.
\end{enumerate}
Moreover, in this case $A\otimes_k B$ is also $l$-homogeneous.
\end{cor}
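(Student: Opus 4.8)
The plan is to reduce the statement to Theorem~\ref{thm:main} together with the explicit description of $T_\Lambda$, and then to compare the higher Auslander-Reiten orbits over $A$, $B$ and $\Lambda$ via the K\"unneth formula. First I would note that an $n$-representation finite algebra is in particular $n$-complete (with $T_A = A$) and, by hypothesis, acyclic, and likewise for $B$; hence Theorem~\ref{thm:main} applies and $\Lambda = A\otimes_k B$ is $(n+m)$-complete and acyclic. By \cite[Proposition 1.13]{Iya11}, an $(n+m)$-complete algebra is $(n+m)$-representation finite if and only if $T_\Lambda = \Lambda$. So the entire corollary reduces to determining exactly when $\on{add}T_\Lambda$ coincides with the projective $\Lambda$-modules.

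Next I would make the orbit data explicit. For each indecomposable injective $I^A$ of $A$ let $a$ be the unique integer with $\tau_n^{a-1}I^A\in\mathcal P(A)$; since $A$ is $n$-representation finite we have $\mathcal P(A) = \on{add}A = \{X\in\mathcal M(A) : \tau_n X = 0\}$, so among its nonzero powers $\tau_n^s I^A$ (with $0\leq s\leq a-1$) only the last one, $s = a-1$, is projective, and $\tau_n^s I^A = 0$ for $s\geq a$. Define $b$ analogously for each indecomposable injective $I^B$ of $B$. The indecomposable injective $\Lambda$-modules are exactly the $I^A\otimes I^B$, and by the K\"unneth formula $\tau_{n+m}^s(I^A\otimes I^B) = \tau_n^s I^A\otimes\tau_m^s I^B$, which is nonzero precisely for $s<\min(a,b)$; hence the corresponding summand of $T_\Lambda$ is $\tau_{n+m}^{\min(a,b)-1}(I^A\otimes I^B)$.

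I would then compute when this summand is projective. Since $k$ is perfect, the indecomposable projective $\Lambda$-modules are exactly the tensor products $P^A\otimes P^B$ of indecomposable projectives, so $\tau_n^{\min(a,b)-1}I^A\otimes\tau_m^{\min(a,b)-1}I^B$ is projective if and only if both (nonzero) tensor factors are. By the observation above, the first factor is projective iff $\min(a,b)-1 = a-1$, i.e. $a\leq b$, and the second iff $b\leq a$; both hold simultaneously iff $a = b$. Letting $I^A$ and $I^B$ range over all indecomposable injectives, $T_\Lambda = \Lambda$ holds if and only if $a = b$ for every such pair, which is equivalent to the existence of a single $l\in\N$ with $a = l$ for all injectives of $A$ and $b = l$ for all injectives of $B$; by definition this says precisely that $A$ and $B$ are both $l$-homogeneous. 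This establishes the equivalence of (1) and (2). For the final claim, in this case every summand of $T_\Lambda$ equals $\tau_{n+m}^{l-1}(I^A\otimes I^B)$, so $\tau_{n+m}^{l-1}D\Lambda = T_\Lambda$ and $\Lambda$ is $l$-homogeneous.

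The step I expect to require the most care is the identification of projective objects inside $\mathcal M$: one must use that, in the representation finite case, projectivity of an object of $\mathcal M$ is detected by the vanishing of $\tau_d$, so that the projective summand of $T_\Lambda$ sits at the very end of each $\tau_d$-orbit, and then combine this with the fact that projectivity of a tensor product over a perfect field is equivalent to projectivity of both tensorands.
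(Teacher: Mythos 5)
Your proof is correct, and its overall skeleton coincides with the paper's: both reduce the corollary, via Theorem~\ref{thm:main} and \cite[Proposition 1.13]{Iya11}, to deciding when $T_{A\otimes B}\cong A\otimes B$. Where you diverge is in how that last question is settled. The paper invokes Lemma~\ref{lem:homog}, which is stated for general $n$- and $m$-complete algebras and is proved by contradiction: assuming $T_{A\otimes B}\cong T_A\otimes T_B$ with $l_i\neq l_j$, one applies $\tau_{n+m}^{-}$ to a suitable summand $\tau_n^{l_i-1}E_i\otimes\tau_m^{l_j-1}F_j$ and produces a non-injective module that would have to be injective. You instead compute directly: using the K\"unneth identity $\tau_{n+m}^s(I^A\otimes I^B)=\tau_n^sI^A\otimes\tau_m^sI^B$ you read off that each $\tau_{n+m}$-orbit of an injective has length $\min(a,b)$ and ends at $\tau_n^{\min(a,b)-1}I^A\otimes\tau_m^{\min(a,b)-1}I^B$, which is projective exactly when $a=b$; this amounts to a direct, self-contained proof of the specialisation of Lemma~\ref{lem:homog} to the representation-finite case. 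Your route is more explicit and arguably more transparent (it exhibits the orbit lengths and makes the final $l$-homogeneity claim immediate), at the cost of relying on the facts that over a perfect field the indecomposable projectives of $\Lambda$ are the $P^A\otimes P^B$ and that a nonzero tensor product is projective iff both factors are -- both of which are consistent with the conventions the paper already uses. The paper's citation of Lemma~\ref{lem:homog} buys generality (the lemma covers the merely $d$-complete case) and a shorter proof of the corollary itself.
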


It should be noted that there is a choice involved in the definition we gave of $d$-completeness, namely that we take $\mathcal M$ to be the $\tau_d$-completion of $\on{add}D\Lambda$. 
We might as well take $\mathcal M$ to be the $\tau_d^- $-completion of $\on{add}\Lambda$, and call $\Lambda$ $d$-cocomplete if it satisfies the dual conditions to 
$(A_d), (B_d), (C_d)$. Then $\Lambda$ is $d$-complete if and only if $\Lambda^{op}$ is $d$-cocomplete.
Notice that $d$-representation finite is the same as $d$-complete and $d$-cocomplete with the same $\mathcal M$. However, if $A$ and $B$ are $n$- and $m$-representation finite, 
then $A\otimes B$ is $(n+m)$-complete and cocomplete, but in general not with the same $\mathcal M$.

\section{Preparation} 

\subsection{$d$-complete algebras} 
Following \cite{Iya11}, we make some observations about $d$-complete algebras in general.
Fix a finite-dimensional algebra $\Lambda$.

\begin{lemma}
  \label{lem:simplif}
  If $\on{gl.dim} \Lambda\leq d$, the following are equivalent:
  \begin{enumerate}
    \item  $\on{Ext}^i (\mathcal M_P, \Lambda) = 0$ for $0<i<d$
    \item $\on{Ext}^i (\mathcal M_P, \Lambda) = 0$ for $0\leq i<d$.
  \end{enumerate}
\end{lemma}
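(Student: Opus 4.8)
The implication from (2) to (1) is immediate, since (2) asserts the vanishing of $\on{Ext}^i(\mathcal M_P,\Lambda)$ on the larger range $0\le i<d$. So the content is $(1)\Rightarrow(2)$, which amounts to the single extra statement $\on{Hom}_\Lambda(\mathcal M_P,\Lambda)=0$, and it suffices to check this on an indecomposable $X\in\mathcal M_P$. I would first record the standard consequence of $\on{gl.dim}\Lambda\le d$ that for any module $Z$ one has $\on{Ext}^d(Z,\Lambda)\neq 0$ if and only if $\on{pd}Z=d$: applying $\on{Hom}(-,\Lambda)$ to a minimal projective resolution turns the top differential into a radical map between projectives, which is never surjective, so $\on{Ext}^d(Z,\Lambda)=\on{coker}$ is nonzero exactly when $P_d\neq 0$. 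Since $X\in\mathcal M_P$ means $\tau_dX=D\on{Ext}^d(X,\Lambda)\neq 0$, every indecomposable of $\mathcal M_P$ has $\on{pd}X=d$ and, by (1), satisfies $\on{Ext}^i(X,\Lambda)=0$ for $0<i<d$.

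The engine of the proof is the identity $\on{Hom}_\Lambda(\tau_d^-Z,\Lambda)=0$, valid for every module $Z$ once $\on{gl.dim}\Lambda\le d$. To prove it, set $N=DZ$ and take a minimal projective resolution $0\to Q_d\xrightarrow{\delta}Q_{d-1}\to\cdots\to Q_0\to N\to 0$ of the left module $N$, noting $Q_d=0$ when $\on{pd}N<d$. Since $\tau_d^-Z=\on{Ext}^d_{\Lambda^{op}}(N,\Lambda)$, applying $(-)^\vee=\on{Hom}(-,\Lambda)$ to this resolution yields a projective presentation $Q_{d-1}^\vee\to Q_d^\vee\to\tau_d^-Z\to 0$. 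Applying $\on{Hom}(-,\Lambda)$ once more and using the natural isomorphism $Q^{\vee\vee}\cong Q$ for projectives identifies $\on{Hom}_\Lambda(\tau_d^-Z,\Lambda)$ with $\ker(\delta\colon Q_d\to Q_{d-1})$, which vanishes because $\delta$ is the injective top differential of a resolution of length at most $d$. Hence it suffices to realise each indecomposable $X\in\mathcal M_P$ as $\tau_d^-Z$ for some $Z$, the natural candidate being $Z=\tau_dX$, so I would aim to prove $X\cong\tau_d^-\tau_dX$.

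This last isomorphism is precisely where hypothesis (1) enters, and it is also the main obstacle. The difficulty is that the naive Auslander--Bridger computation is circular: comparing the minimal projective resolution of $X$ with a resolution of $\on{Tr}_dX$, the hypothesis $\on{Ext}^i(X,\Lambda)=0$ for $0<i<d$ makes the dualised complex $P_0^*\to\cdots\to P_d^*\to\on{Tr}_dX\to 0$ exact in every degree except possibly the $0$-th, whose homology is exactly $\on{Hom}(X,\Lambda)$; thus $\tau_d^-\tau_dX\cong X$ holds if and only if $\on{Hom}(X,\Lambda)=0$, which is the very statement to be proved. The way out is to exploit the defining structure of $\mathcal M$: $X$ is an iterated $\tau_d$-translate of an injective, so over $\Lambda^{op}$ the module $DX$ is an iterated $\tau_d^-$-translate of a projective, and I would use the structural results on $\mathcal M$, $\tau_d$ and $\tau_d^-$ in \cite[Proposition 1.12]{Iya11} to place $X$ in the image of $\tau_d^-$ independently of the circular argument, after which the engine finishes the proof. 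The delicate case to watch is that of an injective indecomposable lying in $\mathcal M_P$ (so $\on{pd}X=d$ but $X\in\on{add}D\Lambda$), which is not literally a $\tau_d$-translate and must be treated through this dual description.
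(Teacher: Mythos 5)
Your reduction to showing $\on{Hom}_\Lambda(X,\Lambda)=0$ for each indecomposable $X\in\mathcal M_P$ is right, and your ``engine'' is correct and correctly proved: dualising a minimal projective resolution of $DZ$ does identify $\on{Hom}_\Lambda(\tau_d^-Z,\Lambda)$ with the kernel of the injective top differential $Q_d\to Q_{d-1}$, so $\on{Hom}_\Lambda(\tau_d^-(\on{mod}\Lambda),\Lambda)=0$. For comparison, the paper does none of this work: its proof is a one-line citation of \cite[Lemma 2.3(b)]{Iya11}, so you are in effect reconstructing the content of that cited lemma.

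The problem is that the step carrying all the weight --- realising each indecomposable $X\in\mathcal M_P$ as $\tau_d^-Z$, i.e.\ proving $X\cong\tau_d^-\tau_dX$ --- is never carried out. You correctly diagnose that the Auslander--Bridger comparison of resolutions is circular, but the escape you sketch does not work as described: passing to $\Lambda^{op}$ and viewing $DX$ as an iterated $\tau_d^-$-translate of a projective left module only lets you run the engine over $\Lambda^{op}$, and $\on{Hom}_{\Lambda^{op}}(DX,\Lambda)\cong\on{Hom}_\Lambda(D\Lambda,X)$, which is the dual vanishing, not the required $\on{Hom}_\Lambda(X,\Lambda)=0$. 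Two further points: an indecomposable $X\in\mathcal M_P$ is only a direct summand of $\tau_d(\tau_d^{i-1}D\Lambda)$, so even its membership in the image of $\tau_d$ needs an argument; and \cite[Proposition 1.12]{Iya11} concerns algebras satisfying condition $(A_d)$, which is not among the hypotheses of this lemma (only $\on{gl.dim}\Lambda\leq d$ is assumed). The non-circular route is Iyama's higher Auslander--Reiten duality: $\tau_d=\tau\Omega^{d-1}$ and $\tau_d^-=\tau^-\Omega^{-(d-1)}$ induce quasi-inverse equivalences between the stable categories (modulo projectives, respectively injectives) of $\left\{X\mid\on{Ext}^i(X,\Lambda)=0,\ 0<i<d\right\}$ and its dual. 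Hypothesis (1) places every indecomposable $X\in\mathcal M_P$ (which is non-projective since $\tau_dX\neq 0$) in the first of these, giving $X\cong\tau_d^-\tau_dX$ honestly and uniformly, including for injective $X$; your engine then finishes. That duality is essentially the statement the paper cites, so until you prove it or invoke it correctly, your argument has a genuine gap exactly at the point you flagged.
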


\begin{proof}
  The only direction to prove follows from \cite[Lemma 2.3(b)]{Iya11}.
\end{proof}

\begin{prop}
  \label{prop:hom}
  If $\Lambda$ is $d$-complete, then 
  \begin{align*}
    \on{Hom}(\tau_d^i D\Lambda, \tau_d^j D\Lambda) = 0
  \end{align*}
  if $i<j$.
\end{prop}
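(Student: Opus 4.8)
The plan is to combine the $d$-dimensional Auslander--Reiten duality with the orbit structure of $\mathcal M$, reducing the desired Hom-vanishing to an $\on{Ext}^d$-vanishing and to a boundary computation. First I would reduce to indecomposable summands: it is enough to show $\on{Hom}(M,N)=0$ when $M$ is an indecomposable summand of $\tau_d^iD\Lambda$ and $N$ one of $\tau_d^jD\Lambda$ with $i<j$. By $(A_d)$ every indecomposable of $\mathcal M$ lies on a $\tau_d$-orbit that starts at a summand of $D\Lambda$ and terminates at a summand of $T_\Lambda$ (where $\tau_d$ vanishes), and $\tau_d^l=0$ for $l\gg0$; thus each module of $\mathcal M$ carries a well-defined level, lowered by $\tau_d$ and raised by $\tau_d^-$.

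The first key step is the boundary case $\on{Hom}(D\Lambda,\tau_d^sD\Lambda)=0$ for $s\ge1$. Here the duality gives no information, since every map out of the injective $D\Lambda$ factors through an injective; instead I would argue directly. Writing $W=\tau_d^{s-1}D\Lambda$ and using that $D$ is a duality together with $\tau_dW=D\on{Ext}^d_\Lambda(W,\Lambda)$, one obtains
\[
  \on{Hom}_\Lambda(D\Lambda,\tau_d^sD\Lambda)\cong\on{Hom}_{\Lambda^{op}}(\on{Ext}^d_\Lambda(W,\Lambda),\Lambda).
\]
The summands of $W$ lying in $\mathcal P$ are killed by $\tau_d$ and contribute nothing, so $W$ may be replaced by its part $W_P\in\mathcal M_P$. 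By $(C_d)$ and Lemma~\ref{lem:simplif} we have $\on{Ext}^t(W_P,\Lambda)=0$ for $0\le t<d$, and for such a module the higher transpose $\on{Ext}^d_\Lambda(W_P,\Lambda)$ has no nonzero map to $\Lambda$, i.e.\ $\on{Hom}_{\Lambda^{op}}(\on{Ext}^d_\Lambda(W_P,\Lambda),\Lambda)=0$. This computation also shows that for $j\ge1$ no nonzero map into $\tau_d^jD\Lambda$ factors through $\on{add}D\Lambda$, so on Hom spaces with target $\tau_d^jD\Lambda$ the honest $\on{Hom}$ agrees with the Hom computed modulo maps factoring through $\on{add}D\Lambda$.

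With the boundary understood, I would invoke the higher Auslander--Reiten duality $\on{Ext}^d_\Lambda(X,Y)\cong D\overline{\on{Hom}}_\Lambda(Y,\tau_dX)$ (valid since $\on{gl.dim}\Lambda\le d$, see \cite{Iya11}) to rewrite, for $i<j$,
\[
  \overline{\on{Hom}}(\tau_d^iD\Lambda,\tau_d^jD\Lambda)\cong D\on{Ext}^d(\tau_d^{j-1}D\Lambda,\tau_d^iD\Lambda).
\]
By the previous paragraph the left-hand side coincides with the honest $\on{Hom}$, so it remains to prove $\on{Ext}^d(\tau_d^{a}D\Lambda,\tau_d^{b}D\Lambda)=0$ whenever $b\le a$. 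This I would establish by induction on the level $b$ of the second argument, using the $d$-almost split sequences in $\mathcal M$ provided by $(B_d)$ to relate $\tau_d^bD\Lambda$ and $\tau_d^{b-1}D\Lambda$: applying $\on{Hom}(\tau_d^aD\Lambda,-)$ to such a $(d+2)$-term sequence and using the intermediate vanishing $\on{Ext}^t(\mathcal M,\mathcal M)=0$ for $0<t<d$ turns the comparison of consecutive levels into a long exact sequence, whose analysis together with the finiteness $\tau_d^l=0$ forces the top $\on{Ext}^d$ to vanish in the range $b\le a$.

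The main obstacle is precisely that Auslander--Reiten duality only controls Hom and Ext up to maps factoring through the relative-injective modules $\on{add}D\Lambda$ and the relative-projective modules $\mathcal P=\on{add}T_\Lambda$, whereas the statement concerns honest morphisms. Reconciling the two at the injective end is exactly what the boundary computation achieves, and a dual bookkeeping with $\tau_d^-$ and $\mathcal P$ is needed at the projective end; at both ends the degeneration of $\tau_d$ means that one cannot rely on the formal duality alone and must use the full strength of $d$-completeness, in particular condition $(C_d)$. The remaining delicate point is the inductive $\on{Ext}^d$-vanishing, where one must check that the $d$-almost split sequences keep the relevant levels within the range $b\le a$ as the induction proceeds.
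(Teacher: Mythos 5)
The paper's own proof is a one-line citation of \cite[Lemma 2.4(e)]{Iya11}, so a self-contained argument is welcome, and the first two thirds of yours are sound. The boundary computation $\on{Hom}(D\Lambda,\tau_d^sD\Lambda)\cong\on{Hom}_{\Lambda^{op}}(\on{Ext}^d_\Lambda(W_P,\Lambda),\Lambda)=0$ is correct: with $\on{Ext}^t(W_P,\Lambda)=0$ for $0\le t<d$ (condition $(C_d)$ plus Lemma \ref{lem:simplif}) the dual of a length-$d$ projective resolution of $W_P$ is a projective resolution of $\on{Ext}^d(W_P,\Lambda)$, and dualizing back gives the vanishing. The passage from $\on{Hom}$ to $\overline{\on{Hom}}$ is also fine.

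The last step, however, has a genuine gap. By the very Auslander--Reiten duality you invoke, the target statement $\on{Ext}^d(\tau_d^aD\Lambda,\tau_d^bD\Lambda)=0$ for $b\le a$ is \emph{equivalent} to $\overline{\on{Hom}}(\tau_d^bD\Lambda,\tau_d^{a+1}D\Lambda)=0$, i.e.\ to the proposition itself; so the duality step makes no progress on its own and everything rests on the proposed induction on $b$, which does not close as sketched. Applying $\on{Hom}(W,-)$ to the $(n{+}m{+}2)$-term sequence $0\to\tau_dY\to C_d\to\cdots\to C_1\to Y\to 0$ with $Y\in\mathcal S(b-1)$ and splicing yields an exact sequence $0\to Q\to\on{Ext}^d(W,\tau_dY)\to\on{Ext}^d(W,C_d)$ with $Q$ a quotient of $\on{top}(W,Y)$; to kill the right-hand term you must know in which slices the summands of $C_d$ lie. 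A priori they could lie in $\mathcal S(c)$ with $c>a$, and for instance $\on{Ext}^d(\tau_d^aD\Lambda,\tau_d^{a+1}D\Lambda)\cong D\overline{\on{Hom}}(\tau_d^{a+1}D\Lambda,\tau_d^{a+1}D\Lambda)$ is nonzero. The localisation of the middle terms to $\on{add}(\tau_d^{b-1}D\Lambda\oplus\tau_d^{b}D\Lambda)$ is exactly what the paper deduces \emph{from} Proposition \ref{prop:hom} in its structure theorem for $d$-almost split sequences, so using it here is circular, and Lemma \ref{lem:wellsuited} only gives $\tau_dY<Z<Y$, which carries no slice information without the Hom-vanishing. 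A non-circular route, essentially Iyama's, keeps your boundary case and replaces the duality and induction by the equivalence $\tau_d^\pm:\mathcal M_P\leftrightarrow\mathcal M_I$ of Lemma \ref{lem:equiv}: for $i\ge1$ it identifies $\on{Hom}(\tau_d^iD\Lambda,\tau_d^jD\Lambda)$ with a subspace of $\on{Hom}(\tau_d^{i-1}D\Lambda,\tau_d^{j-1}D\Lambda)$, and descending to $i=0$ lands exactly in the case you have already proved.
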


\begin{proof}
  This follows from \cite[Lemma 2.4(e)]{Iya11}.
\end{proof}
We can define \emph{slices} $\mathcal S(i)$ on $\mathcal M$ by saying that $\mathcal S(i) = \on{add}\tau_d^iD\Lambda$. Thus $$\mathcal M = \bigvee_{i\geq 0}\mathcal S(i)$$ (meaning that every object $X\in\mathcal M$ can be written 
uniquely as $X = \bigoplus_{i\geq 0}X_i$ with $X_i \in \mathcal S(i)$) and moreover
$\on{Hom}(\mathcal S(i), \mathcal S(j))  = 0$ if $i<j$.

\begin{lemma}
  \label{lem:equiv}
  If $\Lambda$ is $d$-complete then $\tau_d^\pm$ induce quasi-inverse equivalences $\mathcal M_P\leftrightarrow \mathcal M_I$.
\end{lemma}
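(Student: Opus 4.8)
The plan is to produce mutually quasi-inverse natural isomorphisms $\tau_d^-\tau_d \cong \on{id}_{\mathcal M_P}$ and $\tau_d\tau_d^- \cong \on{id}_{\mathcal M_I}$; since $\tau_d^\pm$ are additive functors, this is precisely what it means for them to be quasi-inverse equivalences $\mathcal M_P \leftrightarrow \mathcal M_I$. At the level of objects the picture is clear: for a $d$-complete algebra every indecomposable of $\mathcal M$ is of the form $\tau_d^j I_i$ for an indecomposable injective $I_i$ and $0 \le j \le l_i - 1$, with $\tau_d^{l_i-1}I_i \in \mathcal P$ and $\tau_d^{l_i}I_i = 0$. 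Thus the indecomposables of $\mathcal M_P$ are exactly those with $0 \le j \le l_i - 2$ and those of $\mathcal M_I$ exactly those with $1 \le j \le l_i - 1$; the uniqueness of the slice decomposition $\mathcal M = \bigvee_i \mathcal S(i)$ shows that no such module with $j \ge 1$ is injective, so $\tau_d$ sends $\mathcal M_P$ into $\mathcal M_I$ and $\tau_d^-$ sends $\mathcal M_I$ into $\mathcal M_P$, shifting the orbit index by one in each direction.

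Next I would establish $\tau_d^-\tau_d \cong \on{id}_{\mathcal M_P}$. Any projective summand $P$ of a module in $\mathcal M$ satisfies $\tau_d P = D\on{Ext}^d(P, \Lambda) = 0$ and hence lies in $\mathcal P$; therefore modules in $\mathcal M_P$ have no projective summands. Combining condition $(C_d)$ with Lemma \ref{lem:simplif} gives $\on{Ext}^i(\mathcal M_P, \Lambda) = 0$ for all $0 \le i < d$. These are exactly the hypotheses under which the higher Auslander-Reiten formalism of \cite{Iya11} provides a functorial isomorphism $\tau_d^-\tau_d X \cong X$, yielding the desired natural isomorphism on all of $\mathcal M_P$.

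It remains to prove the dual isomorphism $\tau_d\tau_d^- \cong \on{id}_{\mathcal M_I}$, and this is where I expect the main work. Dualising the previous step, the relevant hypotheses are that modules in $\mathcal M_I$ have no injective summands, which holds by definition, and that $\on{Ext}^i(D\Lambda, \mathcal M_I) = 0$ for $0 \le i < d$. The vanishing for $0 < i < d$ follows from the $d$-cluster tilting property $(B_d)$, since both $D\Lambda$ and $\mathcal M_I$ lie in $\mathcal M$; the degree-zero vanishing $\on{Hom}(D\Lambda, \mathcal M_I) = 0$ follows from the slice orthogonality of Proposition \ref{prop:hom}, as $D\Lambda = \mathcal S(0)$ and $\mathcal M_I \subseteq \bigvee_{j\ge 1}\mathcal S(j)$. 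The delicate point is then to apply the higher Auslander-Reiten formalism over $\Lambda^{op}$ through the duality $D$, matching the functor $\tau_d^-$ on $\Lambda$ with $\tau_d$ on $\Lambda^{op}$ and checking that the resulting object-wise isomorphism is natural rather than merely pointwise. Once both natural isomorphisms are in hand, $\tau_d$ and $\tau_d^-$ are quasi-inverse equivalences $\mathcal M_P \leftrightarrow \mathcal M_I$, as claimed.
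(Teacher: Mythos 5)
Your argument is essentially correct, but it takes a visibly different route from the paper: the paper's entire proof is the citation ``This is \cite[Lemma 2.4(b)]{Iya11}'', whereas you unpack that lemma by reducing it to the general higher AR-duality statement ($\tau_d^{\pm}$ are quasi-inverse equivalences between the subcategories cut out by $\on{Ext}^i(-,\Lambda)=0$, resp.\ $\on{Ext}^i(D\Lambda,-)=0$, for $0\leq i<d$) and then verifying those Ext-vanishing hypotheses on $\mathcal M_P$ and $\mathcal M_I$ via $(C_d)$ plus Lemma \ref{lem:simplif} on one side, and $(B_d)$ plus the slice orthogonality of Proposition \ref{prop:hom} on the other. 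That is in fact how Iyama's own proof of 2.4(b) goes, so what your version buys is transparency about exactly which axioms of $d$-completeness are doing the work; what it does not buy is independence from \cite{Iya11}, since the functorial isomorphisms $\tau_d^-\tau_d\cong\on{id}$ and $\tau_d\tau_d^-\cong\on{id}$ under those hypotheses are still imported wholesale, and you explicitly leave the naturality of the second one unchecked. Two small cautions: your opening paragraph invokes the fact that every indecomposable of $\mathcal M$ equals $\tau_d^jI_i$ (i.e.\ that $\tau_d$ preserves indecomposability along the orbits), which is itself a consequence of the equivalence being proved, so it should be treated as motivation rather than as an ingredient --- and indeed your main argument never really needs it, since well-definedness of $\tau_d:\mathcal M_P\to\mathcal M_I$ follows from $\tau_d=\tau\Omega^{d-1}$ killing injective summands together with $\mathcal M$ being $\tau_d$-closed. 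Similarly, Proposition \ref{prop:hom} is itself proved in the paper by citing \cite[Lemma 2.4(e)]{Iya11}, so if one worries about the internal logic of Iyama's Lemma 2.4 the degree-zero vanishing $\on{Hom}(D\Lambda,\mathcal M_I)=0$ is safer to extract directly from the fact that each indecomposable of $\mathcal M_I$ is a summand of some $\tau_d^jD\Lambda$ with $j\geq 1$.
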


\begin{proof}
  This is \cite[Lemma 2.4(b)]{Iya11}.
\end{proof}

\subsection{$d$-almost split sequences}
In the spirit of generalising Auslander-Reiten theory, it is natural to define the higher analog of almost split sequences as follows.
\begin{defin}[Iyama]
  \label{def:dass}
  A complex with objects in a subcategory $\mathcal C$ of $\on{mod}\Lambda$ 
  $$
  \xymatrix{
    C_d\ar[r]^-{f_d}& C_{d-1}\ar[r]^{f_{d-1}}& C_{d-2}\ar[r]^{f_{d-2}}& \cdots
  }
  $$
  is a \emph{source sequence (in $\mathcal C$) of $C_d$} if the following conditions hold:
  \begin{enumerate}
    \item $f_i\in \on{rad}(C_i, C_{i-1})$ for all $i$,
    \item The sequence of functors
      $$
      \xymatrix{
	\cdots\ar[r]^-{-\circ f_{d-2}}&\on{Hom}(C_{d-2}, - )\ar[r]^{-\circ f_{d-1}} & \on{Hom}(C_{d-1}, -)\ar[r]^{-\circ f_d}& \on{rad}(C_d, -)\ar[r]&0
      }
    $$
    is exact on $\mathcal C$.
  \end{enumerate}
    Dually we can define \emph{sink sequences}.
    An exact sequence
	$$
  	\xymatrix{
	  0\ar[r] &C_{d+1}\ar[r] & C_{d-1}\ar[r]&\cdots \ar[r]& C_1\ar[r]& C_0\ar[r]&0
  	}
  	$$
	is an \emph{$d$-almost split sequence} if it is a source sequence of $C_{d+1}$ and a sink sequence of $C_0$.
	We say that such $d$-almost split sequence starts in $C_{d+1}$ and ends in $C_0$.
\end{defin}

 \begin{defin} We say that 
   $\mathcal M = \mathcal M(\Lambda)$ \emph{has $d$-almost split sequences} if for every indecomposable $X\in \mathcal M_I$ (respectively $Y\in \mathcal M_P$) there is an
   $d$-almost split sequence in $\mathcal C$
	\begin{align*}
     0\to X\to C_d\to\cdots\to C_1\to Y\to 0.
   \end{align*}
 \end{defin}

 In this case we must have $X \cong \tau_dY, Y\cong \tau_d^-X$.
 This holds for $d$-complete algebras (\cite[Theorem 2.2(a)(i)]{Iya11}):
 \begin{thm}
   If $\Lambda$ is $d$-complete, then $\mathcal M$ has $d$-almost split sequences.
 \end{thm}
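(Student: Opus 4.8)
The plan is to construct, for each indecomposable $Y\in\mathcal M_P$, an exact sequence
$$0\to\tau_d Y\to C_d\to\cdots\to C_1\to Y\to 0$$
with all $C_i\in\mathcal M$, and to verify that it is at once a sink sequence of $Y$ and a source sequence of $\tau_d Y$. By Lemma~\ref{lem:equiv} the translations $\tau_d^\pm$ give mutually inverse equivalences $\mathcal M_P\leftrightarrow\mathcal M_I$, so this single construction also settles the indecomposables $X\in\mathcal M_I$, with $X\cong\tau_d Y$ and $Y\cong\tau_d^- X$. The two driving ingredients are a higher Auslander–Reiten duality, which identifies the starting term with $\tau_d Y$, and the $d$-cluster tilting property $(B_d)$ of $\mathcal M$ inside $T^\perp$, which forces the sequence to terminate after exactly $d$ middle terms.

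First I would set up the higher Auslander–Reiten duality. Because $\on{gl.dim}\Lambda\le d$, any $Y\in\mathcal M_P$ has a projective resolution of length at most $d$; applying $\tau_d=D\on{Ext}^d_\Lambda(-,\Lambda)$ and using condition $(C_d)$ in the strengthened form of Lemma~\ref{lem:simplif} (so that $\on{Ext}^i(\mathcal M_P,\Lambda)=0$ for all $0\le i<d$) yields a functorial duality expressing $\on{Ext}^d_\Lambda(Y,-)$ in terms of $\on{Hom}_\Lambda(-,\tau_d Y)$ on $\mathcal M$. This identifies $\tau_d Y\in\mathcal M_I$ as the correct source term and, via the element corresponding to the identity of $Y$, provides the extension realizing the sought sequence.

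Next I would realize the sequence concretely by iterated approximations. As $\mathcal M$ is $d$-cluster tilting in $T^\perp$ it is functorially finite there, so one begins with a minimal right almost split map $C_1\to Y$ in $\mathcal M$ and then repeatedly takes minimal right $\mathcal M$-approximations $C_{i+1}\to\ker(C_i\to C_{i-1})$, obtaining a complex with $C_i\in\mathcal M$ and radical differentials. The essential claim is that each intermediate syzygy again lies in $T^\perp$, so that the defining Ext-vanishing $\on{Ext}^i(\mathcal M,-)=0$ for $0<i<d$ of $(B_d)$ applies step by step, and that after exactly $d$ stages the kernel returns to $\mathcal M$ and is isomorphic to $\tau_d Y$ by the duality above. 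The directedness recorded in Proposition~\ref{prop:hom}, namely $\on{Hom}(\mathcal S(i),\mathcal S(j))=0$ for $i<j$, is what keeps each $C_i$ in the expected slice and prevents the construction from leaving $\mathcal M$ prematurely.

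Finally, the source and sink properties are formal consequences. Condition (1) of Definition~\ref{def:dass} is the radicality of the maps, which holds by minimality of the approximations, and condition (2) — exactness of the associated sequences of representable functors on $\mathcal M$ — is precisely the statement that the approximation sequence is a minimal projective resolution, in the relevant functor category over $\mathcal M$, of the radical of the appropriate representable functor, which is built into the minimal almost split property at each stage. I expect the main obstacle to be the termination analysis of the previous paragraph: showing that the iterated syzygies remain inside $T^\perp$ and that the $d$-th syzygy lands back in $\mathcal M$ — rather than escaping $T^\perp$ or stabilizing at the wrong length — is the technical heart, and it is exactly here that the full $d$-cluster tilting identity $(B_d)$, together with $\on{gl.dim}\Lambda\le d$ and $(C_d)$, must be used.
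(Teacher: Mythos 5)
The paper offers no argument for this statement at all: it is imported verbatim from \cite[Theorem 2.2(a)(i)]{Iya11}, so the only meaningful comparison is with the strategy of that cited proof. Your outline does assemble the right ingredients --- iterated minimal right $\mathcal M$-approximations (which exist because $\mathcal M$ has an additive generator), dimension shifting against the rigidity of $\mathcal M$, the $d$-cluster tilting identity $(B_d)$ to force the $d$-th syzygy back into $\mathcal M$, and higher Auslander--Reiten duality to identify that syzygy with $\tau_d Y$ --- and these are indeed the tools used in the cited proof.

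As a proof, however, the proposal has two genuine gaps. First, the claim that the source and sink properties are ``formal consequences'' of the construction is not correct. Building the sequence by minimal right $\mathcal M$-approximations yields, essentially by definition, exactness of the contravariant sequence $0\to\on{Hom}(-,K_d)\to\on{Hom}(-,C_d)\to\cdots\to\on{Hom}(-,C_1)\to\on{rad}(-,Y)\to0$ on $\mathcal M$, i.e.\ the sink property at $Y$; but the covariant exactness of $\on{Hom}(C_{d-1},-)\to\on{Hom}(C_d,-)\to\on{rad}(K_d,-)\to0$ required by Definition \ref{def:dass} for the source property is a separate assertion that does not follow formally from a construction carried out entirely on the right. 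One needs either the functorial form of the higher AR duality applied to the whole complex, or the dual construction by minimal left approximations together with an argument that the two sequences coincide. Second, the termination and identification step --- that each syzygy stays in $T^\perp$ (which uses that $T\in\mathcal M$, so each approximation is in particular a right $\on{add}T$-approximation, and that the approximations are surjective because $T$ generates $T^\perp$ while $Y\notin\on{add}T$), that the $d$-th syzygy is rigid against $\mathcal M$ and hence lies in $\mathcal M$ by $(B_d)$, and that it is isomorphic to $\tau_dY$ --- is correctly identified as the technical heart but is then deferred rather than carried out, so the proposal remains a plan rather than a proof. A minor point: Proposition \ref{prop:hom} plays no role in the existence of the sequences; in the paper it enters only in the later structure theorem describing them as cones.
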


To apply the methods introduced in \cite{Pas17}, we need to rephrase Definition \ref{def:dass} as follows:
for any indecomposable $X\in \mathcal C$
we can define a functor
 $F_X$ on complexes of radical maps by mapping
$$
C_\bullet=
\xymatrix{
	\cdots \ar[r]^{f_{i+1}} & C_i \ar[r]^{f_i} & \cdots \ar[r]^{f_{1}} &  C_0 \ar[r]^{f_0} &\cdots
}
$$
to 
$$
F_X(C_\bullet) =
\xymatrix{
	\cdots \ar[r]^-{f_{i+1}\circ-} &
	\on{Hom}(X, C_i) \ar[r]^-{f_i \circ-} & \cdots \ar[r]^-{f_{1}\circ-} 
	& \on{rad}(X, C_0) \ar[r]^-{f_0\circ-}& \cdots
}
$$
(that is, $F_X$ is the subfunctor of $\on{Hom}(X, -)$ given by replacing $\on{Hom}(X, C_0)$ with $\on{rad}(X,C_0)$). 
Similarly, we can define a subfunctor $G_X$ of the contravariant functor $\on{Hom}(-,X)$ by mapping $C_\bullet$ to
$$
G_X(C_\bullet) = 
\xymatrix{
	\cdots \ar[r]^-{-\circ f_0} &
	\on{Hom}(C_0, X) \ar[r]^-{-\circ f_1} & \cdots \ar[r]^-{-\circ f_{d+1}} 
	& \on{rad}(C_{d+1},X) \ar[r]^-{-\circ f_{d+2}}& \cdots
}
$$

\begin{lemma}
  Let $C_\bullet$ be a complex in $\mathcal C$. Then
  \begin{enumerate}
    \item If $C_i = 0$ for all $i>d+1$, then $C_\bullet$ is a sink sequence if and only if $F_X(C_\bullet)$ is exact for every $X\in \mathcal C$.
    \item If $C_i = 0$ for all $i<0$, then $C_\bullet$ is a source sequence if and only if $G_X(C_\bullet)$ is exact for every $X\in \mathcal C$.
    \item If $C_i = 0$ for all $i>d+1$ and $i<0$, then $C_\bullet$ is $d$-almost split if and only if $F_X(C_\bullet)$ and $G_X(C_\bullet)$ are exact for every $X\in \mathcal C$.
  \end{enumerate}
\end{lemma}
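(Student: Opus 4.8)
The plan is to read the three equivalences directly off the definitions, the only genuine work being to match the $\on{rad}$-terms and the boundary behaviour, and then to address module-exactness in part (3).

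First I would set up the dictionary between the functor sequences in Definition \ref{def:dass} and the complexes $F_X(C_\bullet)$, $G_X(C_\bullet)$. For a fixed $X\in\mathcal C$, evaluating the covariant functor $\on{Hom}(X,-)$ (resp. the contravariant $\on{Hom}(-,X)$) occurring in the definition of a sink (resp. source) sequence at the objects $C_i$ produces exactly $F_X(C_\bullet)$ (resp. $G_X(C_\bullet)$), the only modification being that $\on{rad}$ replaces $\on{Hom}$ at the single distinguished term, namely $C_0$ for $F_X$ and $C_{d+1}$ for $G_X$. Since a sequence of functors is exact on $\mathcal C$ precisely when its evaluation at every object of $\mathcal C$ is exact, this immediately yields both directions of (1) and (2) once the boundary terms are accounted for. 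Concretely, for (1) the hypothesis $C_i=0$ for $i>d+1$ collapses the a priori two-sided-infinite complex $F_X(C_\bullet)$ to $0\to \on{Hom}(X,C_{d+1})\to\cdots\to\on{Hom}(X,C_1)\to\on{rad}(X,C_0)\to 0$: exactness at the outermost term is the injectivity imposed by the complex being bounded there, exactness at the interior terms is condition (2) in the definition of a sink sequence evaluated at $X$, and exactness at $\on{rad}(X,C_0)$ is precisely the surjectivity encoded by the arrow ``$\to 0$''. Part (2) is the formal dual, using $G_X$, the lower bound $C_i=0$ for $i<0$, and source sequences.

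For (3), combining (1) and (2) shows that $F_X$ and $G_X$ are exact for every $X\in\mathcal C$ if and only if $C_\bullet$ is simultaneously a source and a sink sequence. The remaining point, which I expect to be the main obstacle, is that such a $C_\bullet$ must then be an exact sequence of $\Lambda$-modules, as required in the definition of a $d$-almost split sequence. The functor conditions alone only give that $f_{d+1}$ is a monomorphism and $f_1$ an epimorphism relative to maps into, respectively out of, objects of $\mathcal C$, together with middle-exactness of $\on{Hom}(X,-)$ and $\on{Hom}(-,X)$ for all $X\in\mathcal C$; promoting these to honest exactness at the module level requires testing against the kernels, images, and cokernels of the maps $f_i$, which need not a priori belong to $\mathcal C$. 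To close this gap I would exploit that the sequences in question live inside the ambient $d$-cluster tilting subcategory $\mathcal M$, whose closure properties make the required test objects available in $\mathcal C$; equivalently, one appeals to \cite[Theorem 2.2]{Iya11}. Everything else is a pure unwinding of Definition \ref{def:dass} and of the definitions of $F_X$ and $G_X$, so I would keep the written proof short and concentrate the detail on this last implication.
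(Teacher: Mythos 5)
Your handling of parts (1) and (2) is exactly the paper's (the paper records the whole lemma as a direct check): ``exact on $\mathcal C$'' for the sequence of functors in Definition \ref{def:dass} means exact after evaluation at each object of $\mathcal C$, and those evaluations are precisely $F_X(C_\bullet)$, respectively $G_X(C_\bullet)$, with the bounded support matching the two ends of the functor sequence. Nothing more is needed there.

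For part (3) you correctly isolate the only non-formal point: a $d$-almost split sequence is by definition an \emph{exact} sequence of modules, so the ``if'' direction must recover exactness of $C_\bullet$ from exactness of $F_X(C_\bullet)$ and $G_X(C_\bullet)$ for $X\in\mathcal C$, and this is not a purely diagrammatic consequence. The paper's one-line proof is silent on this. However, your proposed repair does not go through as stated: $\mathcal M$ is not closed under kernels, images or cokernels in any way that would put the needed test objects inside $\mathcal C$, and \cite[Theorem 2.2]{Iya11} is the result this lemma is ultimately fed into (via Theorem \ref{thm:key}), not a source of module-level exactness here. The concrete argument is more elementary. In every intended application $\mathcal C$ contains $\on{add}D\Lambda$, and $\on{Hom}_\Lambda(-,D\Lambda)\cong D(-)$; since $f_{d+1}$ is radical, the map $\on{Hom}(C_d,D\Lambda)\to\on{rad}(C_{d+1},D\Lambda)$ is the corestriction of $Df_{d+1}$, so exactness of $G_{D\Lambda}(C_\bullet)$ forces $C_\bullet$ to be exact at $C_0,\dots,C_d$, and exactness at $C_{d+1}$ is then equivalent to $\on{rad}(C_{d+1},D\Lambda)=\on{Hom}(C_{d+1},D\Lambda)$, i.e.\ to $C_{d+1}$ having no injective indecomposable summand. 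Alternatively, and this is how the lemma is actually used in the paper (e.g.\ for $\on{Cone}(\varphi\otimes\psi)$, where exactness comes from the K\"unneth formula), one verifies module-level exactness independently and only invokes the formal parts (1) and (2). You should replace the appeal to closure properties of $\mathcal M$ by one of these two arguments, or add the hypothesis that $C_\bullet$ is exact to part (3).
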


\begin{proof}
  Direct check using the definitions.
\end{proof}

By additivity, in the above Lemma we can replace ``every $X\in\mathcal C$'' by ``every indecomposable $X\in \mathcal C$''.

 Notice that since $d$-almost split sequences come from minimal projective resolutions of a functor $\on{rad}(C_0, -)$, they are uniquely determined by 
 $C_0$ up to isomorphism of complexes.
 Moreover, we have
 \begin{lemma}
   Any map $f_0:C_0\to D_0$ between indecomposables in $\mathcal M_P$ induces a map of complexes $f_\bullet: C_\bullet\to D_\bullet$, where
   $$
   \xymatrix{
     C_\bullet = & 0\ar[r]& C_{d+1}\ar[r]^{g_{d+1}}& \cdots \ar[r]^{g_1}& C_0\ar[r]&0,}
     $$$$
     \xymatrix{
       D_\bullet = & 0\ar[r]& D_{d+1}\ar[r]^{h_{d+1}}& \cdots \ar[r]^{h_1}& D_0\ar[r]&0
  }
   $$
   are the $d$-almost split sequences ending in $C_0$ and $D_0$ respectively, if these exist.
 \end{lemma}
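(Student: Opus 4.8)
The plan is to build the components $f_i\colon C_i\to D_i$ of the chain map inductively, lifting $f_0$ one step at a time and exploiting that $D_\bullet$, being a sink sequence of $D_0$, behaves exactly like a projective resolution. Write the two $d$-almost split sequences as $C_\bullet\colon 0\to C_{d+1}\xrightarrow{g_{d+1}}\cdots\xrightarrow{g_1}C_0\to 0$ and $D_\bullet\colon 0\to D_{d+1}\xrightarrow{h_{d+1}}\cdots\xrightarrow{h_1}D_0\to 0$. By the preceding lemma the sink-sequence property of $D_\bullet$ is equivalent to the exactness of $F_X(D_\bullet)$ for every indecomposable $X\in\mathcal C$; I would apply this with $X=C_i$ at each stage.

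First I would observe that post-composition with $f_0$ carries $\on{rad}(-,C_0)$ into $\on{rad}(-,D_0)$, since $\on{rad}$ is an ideal and hence stable under composition with arbitrary morphisms on either side. For the base case, $f_0g_1\in\on{rad}(C_1,D_0)$, and exactness of $F_{C_1}(D_\bullet)$ at its rightmost term says precisely that $(h_1\circ-)\colon\on{Hom}(C_1,D_1)\to\on{rad}(C_1,D_0)$ is surjective; this yields $f_1$ with $h_1f_1=f_0g_1$. Here the radical twist built into $F_{C_1}$ is exactly what guarantees that the radical map $f_0g_1$ is hit.

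For the inductive step, suppose $f_0,\dots,f_{i-1}$ have been constructed with $h_jf_j=f_{j-1}g_j$ for all $j<i$. Then $h_{i-1}(f_{i-1}g_i)=f_{i-2}g_{i-1}g_i=0$, because $g_{i-1}g_i=0$, so $f_{i-1}g_i$ lies in the kernel of $(h_{i-1}\circ-)\colon\on{Hom}(C_i,D_{i-1})\to\on{Hom}(C_i,D_{i-2})$. Exactness of $F_{C_i}(D_\bullet)$ at $\on{Hom}(C_i,D_{i-1})$ identifies this kernel with the image of $(h_i\circ-)$, producing $f_i\colon C_i\to D_i$ with $h_if_i=f_{i-1}g_i$. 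The induction terminates at $i=d+1$ since $C_{d+2}=0$, and the resulting family $f_\bullet=(f_i)$ is by construction a morphism of complexes lifting $f_0$. Conceptually this is simply the comparison theorem: the sink sequences exhibit $\on{Hom}(-,C_\bullet)$ and $\on{Hom}(-,D_\bullet)$ as projective resolutions of $\on{rad}(-,C_0)$ and $\on{rad}(-,D_0)$ in the functor category over $\mathcal C$, where representables are projective by Yoneda, and $f_0$ induces the map of functors to be lifted; the Yoneda lemma then reinterprets the lifted natural transformations as the desired maps $f_i$. I would present the explicit induction, since it uses directly the $F_X$-criterion just established.

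The main obstacle is bookkeeping rather than anything conceptual: at each stage one must verify that the relevant element genuinely lands in the kernel matched by the exactness of $F_{C_i}(D_\bullet)$, the only mildly delicate point being the base case, where the radical twist is essential. No difficulty beyond these exactness checks is expected, as they are immediate from the definition of sink sequence and the fact that $\on{rad}$ is an ideal.
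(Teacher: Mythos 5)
Your proposal is correct and follows essentially the same route as the paper: the base case uses surjectivity of $h_1\circ-$ onto $\on{rad}(C_1,D_0)$ (i.e.\ exactness of $F_{C_1}(D_\bullet)$ at the last term, with $f_0g_1$ radical since $\on{rad}$ is an ideal), and the inductive step uses exactness of $F_{C_i}(D_\bullet)$ in the middle to lift $f_{i-1}g_i$ through $h_i\circ-$. The added remark interpreting this as the comparison theorem for projective resolutions of $\on{rad}(-,C_0)$ and $\on{rad}(-,D_0)$ in the functor category is a fair conceptual gloss but does not change the argument.
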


 \begin{proof}
 	The map $f_0g_1: C_1\to D_0$ is a radical morphism, and since
	$$
	\xymatrix{
	  \on{Hom}(C_1, D_1)\ar[r]^{h_1\circ-}& \on{rad}(C_1, D_0)
	}
	$$
   is surjective, there is a map $f_1:C_1\to D_1$ such that $h_1f_1= f_0g_1$. Now assume we have constructed maps $f_j:C_j\to D_j$ that make all diagrams commute, for all $0\leq j<i$ for some $i\geq 2$. 
   We have that 
   $$
   \xymatrix{
   \on{Hom}(C_i, D_i)\ar[r]^{h_i\circ -}& \on{Hom}(C_i, D_{i-1})\ar[r]^{h_{i-1}\circ -}& \on{Hom}(C_i, D_{i-2})
 }
   $$
   is exact in the middle term by assumption. Since $h_{i-1}f_{i-1}g_i = f_{i-2}g_{i-1}g_i = 0$, we have that $f_{i-1}g_i\in \on{ker}(h_{i-1}\circ -) = \on{im}(h_i \circ-)$, that is 
   there is a map $f_i:C_i\to D_i$ such that $f_{i-1}g_i = h_i f_i$. The $f_i$'s we have defined recursively give by construction a map of complexes $f_\bullet: C_\bullet\to D_\bullet$.
 \end{proof}
 The following is a result which appeared in \cite{Pas17} in the setting of $d$-representation finite algebras, and which can be reformulated in the setting of $d$-complete algebras.
\begin{thm}
  Let $\Lambda$ be $d$-complete. Let $X\in \mathcal S(i)$ with $i>0$. Then the $d$-almost split sequence starting in $X$ is isomorphic as a complex to $\on{Cone}\varphi$, where 
  $\varphi: E_\bullet\to F_\bullet$ is a map of complexes, such that:
  \begin{enumerate}
  \item All the maps appearing in $E_\bullet$, $F_\bullet$, and the components of $\varphi$ are radical, 
  \item $E_j\in \mathcal S(i)$ and $F_j\in \mathcal S(i-1)$ for every $j$.
  \end{enumerate}
\end{thm}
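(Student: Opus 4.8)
The plan is to extract $E_\bullet$ and $F_\bullet$ directly from the $d$-almost split sequence $0\to X\to C_d\to\cdots\to C_1\to Y\to 0$, where $Y=\tau_d^-X\in\mathcal S(i-1)$. This sequence exists because $i>0$ forces $X\in\mathcal M_I$: the slice decomposition partitions the indecomposables of $\mathcal M$, and the injectives all lie in $\mathcal S(0)$, so an indecomposable of $\mathcal S(i)$ with $i>0$ has no injective summand. By additivity I first reduce to $X$ indecomposable. The strategy is then to show that the sequence is already in ``cone shape'' once its terms are decomposed along slices, rather than building $E_\bullet,F_\bullet$ from scratch.

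The crucial step is to prove that each middle term decomposes as $C_j=E_{j-1}\oplus F_j$ with $E_{j-1}\in\mathcal S(i)$ and $F_j\in\mathcal S(i-1)$; that is, that only the two consecutive slices $\mathcal S(i)$ and $\mathcal S(i-1)$ occur. Writing $C_j=\bigoplus_a C_j^{(a)}$ with $C_j^{(a)}\in\mathcal S(a)$, I would bound the occurring slices from above and below. The upper bound $a\le i$ should follow from the source-sequence property at the top, combined with $\on{Hom}(\mathcal S(i),\mathcal S(a))=0$ for $a>i$ (Proposition \ref{prop:hom}) applied to the left almost split map $X\to C_d$; the lower bound $a\ge i-1$ follows dually from the sink-sequence property at the bottom and $\on{Hom}(\mathcal S(a),\mathcal S(i-1))=0$ for $a<i-1$ applied to $C_1\to Y$. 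The real work is to propagate these bounds from the outer terms inward along the complex, using the exactness of the functors $F_Z$ and $G_Z$ on $\mathcal M$ and the minimality (radicality) of all the maps; to transport the constraint inward one likely compares the sequence starting in $X$ with source/sink data in the lower slices via the equivalences $\tau_d^\pm\colon\mathcal M_P\leftrightarrow\mathcal M_I$ of Lemma \ref{lem:equiv}, arguing by induction on $i$.

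Once the two-slice decomposition is in hand, the cone structure is essentially automatic. Each differential $C_j\to C_{j-1}$ becomes a matrix $E_{j-1}\oplus F_j\to E_{j-2}\oplus F_{j-1}$, and its component $F_j\to E_{j-2}$ lies in $\on{Hom}(\mathcal S(i-1),\mathcal S(i))=0$ by Proposition \ref{prop:hom}. Hence every differential is lower triangular, of the form $\bigl(\begin{smallmatrix} d_E & 0\\ \varphi & d_F\end{smallmatrix}\bigr)$, and the relation $d^2=0$ forces $d_E^2=0$, $d_F^2=0$, and $d_F\varphi=\varphi d_E$. This exhibits $E_\bullet$ (with top term $X$) and $F_\bullet$ (with bottom term $Y$) as complexes in $\mathcal S(i)$ and $\mathcal S(i-1)$ respectively, and $\varphi\colon E_\bullet\to F_\bullet$ as a chain map whose mapping cone is precisely the original sequence. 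Condition (2) is then the two-slice statement itself, while condition (1) is immediate: all the $f_j$ are radical by Definition \ref{def:dass}, and every component of a radical map relative to a direct-sum decomposition is again radical.

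I expect the main obstacle to be the two-slice decomposition of the middle terms, that is, ruling out contributions from slices other than $\mathcal S(i)$ and $\mathcal S(i-1)$ and showing this survives the passage through the interior of the complex. The triangularity of the differentials, the identification with a mapping cone, and the radicality of the extracted pieces are then formal consequences of the slice $\on{Hom}$-vanishing recorded in Proposition \ref{prop:hom}.
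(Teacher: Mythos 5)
Your proposal follows essentially the same route as the paper (which defers the details to Theorem 2.3 of \cite{Pas17}): decompose the middle terms along the slices, use Proposition \ref{prop:hom} to show that only $\mathcal S(i)$ and $\mathcal S(i-1)$ occur, and read off the cone structure from the vanishing of $\on{Hom}(\mathcal S(i-1),\mathcal S(i))$. The one refinement worth noting is that the ``propagation inward'' you flag as the main obstacle is already supplied by Lemma \ref{lem:nozeros}: since no differential has a zero row or column, any indecomposable summand of a middle term is linked to $X$ and to $\tau_d^-X$ by chains of nonzero radical maps, which pins its slice between $i$ and $i-1$ with no induction on $i$ required.
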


\begin{proof}
  This is shown exactly as in \cite[Theorem 2.3]{Pas17}. Namely, one decomposes the modules $M_j$ appearing in the $d$-almost split sequence starting in $X$ as
  $M_j = \bigoplus_{i \geq 0} M_{ij}$ with $M_{ij}\in\mathcal S(i)$.
  One checks using Proposition \ref{prop:hom} that in order for the sequence to be $d$-almost split, all the $M_j$ must be in $\on{add}\left(\tau_d^iD\Lambda\oplus \tau_d^{i-1}D\Lambda\right)$ for some $i$. 
  Now let $E_j = M_{i, j+1}$ and $F_{j} = M_{i-1,j}$. Using that $\on{Hom}(\tau_d^{i-1}D\Lambda, \tau_d^i D\Lambda) = 0 $ one can choose suitable differentials 
  for $E_\bullet$ and $F_{\bullet}$ and a morphism $\varphi_\bullet: E_\bullet\to F_\bullet$ such that $\on{Cone}\varphi$ is the desired sequence.
\end{proof}

We will need a technical lemma:
\begin{lemma}
  \label{lem:nozeros}
  Let 
	$$
  	\xymatrix{
	  0\ar[r] &C_{d+1}\ar[r]^{f_{d+1}} & C_d\ar[r]&\cdots \ar[r]& C_1\ar[r]^{f_1}& C_{0}\ar[r]&0
  	}
  	$$
   be a $d$-almost split sequence. Then for any choice of decomposition of the modules $C_i$ into indecomposables, the corresponding matrices of the maps $f_i$ have no zero column and no zero row.
\end{lemma}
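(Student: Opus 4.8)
The plan is to show that a zero column or a zero row in any $f_i$ would force the identity map of an indecomposable summand to factor through a radical map, which is impossible. Fix once and for all a decomposition of each $C_i$ into indecomposables, and write $\iota$, $\pi$ for the associated summand inclusions and projections. The two facts I would use throughout are: first, since the sequence is $d$-almost split it is simultaneously a source sequence of $C_{d+1}$ and a sink sequence of $C_0$, so by the preceding Lemma the functor complexes $F_X(C_\bullet)$ and $G_X(C_\bullet)$ are exact for every indecomposable $X\in\mathcal C$; second, $\on{rad}$ is a two-sided ideal, so every composite involving a radical map is radical, whereas $\on{id}_Z\notin\on{rad}(Z,Z)$ for $Z$ indecomposable (as $\on{End}(Z)$ is local).

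First I would rule out zero columns. A zero column of $f_i$ means there is an indecomposable summand $C_{i,l}$ of $C_i$ with $f_i\circ\iota_{i,l}=0$, where $\iota_{i,l}\colon C_{i,l}\to C_i$ is the inclusion. For $i=d+1$ this is immediate: exactness of the sequence at $C_{d+1}$ makes $f_{d+1}$ injective, so no summand of $C_{d+1}$ can be killed. For $1\le i\le d$ I would apply $F_X$ with $X=C_{i,l}$: the element $\iota_{i,l}\in\on{Hom}(C_{i,l},C_i)$ lies in the kernel of $f_i\circ-$, so by exactness of $F_{C_{i,l}}(C_\bullet)$ at the $C_i$-spot it lies in the image of $f_{i+1}\circ-$, giving $g\colon C_{i,l}\to C_{i+1}$ with $f_{i+1}\circ g=\iota_{i,l}$. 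Composing with a retraction $\pi_{i,l}$ of $\iota_{i,l}$ yields $(\pi_{i,l}\circ f_{i+1})\circ g=\on{id}_{C_{i,l}}$; but $\pi_{i,l}\circ f_{i+1}$ is radical because $f_{i+1}$ is, so $\on{id}_{C_{i,l}}\in\on{rad}(C_{i,l},C_{i,l})$, a contradiction.

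The absence of zero rows is entirely dual, with $G_X$ in place of $F_X$. A zero row of $f_i$ means there is an indecomposable summand $C_{i-1,k}$ of $C_{i-1}$ with $\pi_{i-1,k}\circ f_i=0$. For $i=1$ this contradicts the surjectivity of $f_1$ coming from exactness at $C_0$. For $2\le i\le d+1$ I would apply $G_X$ with $X=C_{i-1,k}$: the projection $\pi_{i-1,k}\in\on{Hom}(C_{i-1},C_{i-1,k})$ lies in the kernel of $-\circ f_i$, hence by exactness of $G_{C_{i-1,k}}(C_\bullet)$ in the image of $-\circ f_{i-1}$, so $\pi_{i-1,k}=\psi\circ f_{i-1}$ for some $\psi\colon C_{i-2}\to C_{i-1,k}$. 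Precomposing with the inclusion $\iota_{i-1,k}$ gives $\psi\circ(f_{i-1}\circ\iota_{i-1,k})=\on{id}_{C_{i-1,k}}$, and since $f_{i-1}\circ\iota_{i-1,k}$ is radical this is the same contradiction.

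The one place I expect to have to be careful is the treatment of the two extreme maps $f_{d+1}$ and $f_1$: these are exactly the maps for which there is no neighbouring differential ($f_{d+2}$, respectively $f_0$) through which to factor, so the factorisation step of the interior argument is unavailable. There one simply falls back on the injectivity of $f_{d+1}$ and the surjectivity of $f_1$, which follow directly from exactness of the original sequence at its endpoints. Beyond this bookkeeping the proof is the uniform ``identity factors through a radical map'' contradiction, so no genuine computation is required.
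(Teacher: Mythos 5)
Your proof is correct and follows essentially the same route as the paper: assume a zero column (resp.\ row), use exactness of $F_X(C_\bullet)$ (resp.\ $G_X(C_\bullet)$) to factor the identity of the offending indecomposable summand through a radical map, and conclude that the summand is zero. Your explicit treatment of the boundary maps $f_{d+1}$ and $f_1$ via injectivity and surjectivity of the exact sequence is a slightly more careful bookkeeping of the endpoint cases than the paper's, but the argument is the same.
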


\begin{proof}
  We argue by contradiction. Assume $f_{i}$ has a zero column for $i>1$. Then there is a complex
  $$
  \xymatrix{
    C_{i+1}\ar[r]^-*{\left[
      \begin{smallmatrix}
	f_{i+1}^1\\ f_{i+1}^2
    \end{smallmatrix}\right]
    }
    & C_i^1\oplus C_i^2\ar[r]^-*{ \left[ 
    \begin{smallmatrix}
    f_i^1& 0
\end{smallmatrix}\right]}
& C_{i-1}
  }
  $$
  such that
  $$
  \xymatrix{
    \on{Hom}(C_i^2, C_{i+1})\ar[rr]^-*{\left[ 
      \begin{smallmatrix}
	f_{i+1}^1\circ -\\
	f_{i+1}^2\circ - 
      \end{smallmatrix}
    \right]} 
    && *+{\begin{smallmatrix}
      \on{Hom}(C_i^2, C_i^1)\\
      \oplus\\
      \on{Hom}(C_i^2, C_i^2)
    \end{smallmatrix}}
    \ar[rr]^-*{\left[ \begin{smallmatrix}
      f_i^1\circ- & 0
  \end{smallmatrix}\right]}
  && \on{Hom}(C_i^2, C_{i-1})
  }
  $$
  is exact in the middle, which implies that $f_{i+1}^2\circ -$ is surjective on $\on{Hom}(C_i^2, C_i^2)$, and so
  there is $h\in \on{Hom}(C_i^2, C_{i+1})$ such that $f_{i+1}^2\circ h = \on{id}_{C_{i}^2}$. Since $f_{i+1}^2\in\on{rad}(C_{i+1}, C_i)$, it follows that $C_{i}^2 = 0$ and we are done. 
  For proving the case $i = 1$, just replace $\on{Hom}(C_i^2, C_{i+1})$ with $\on{rad}(C_i^2, C_{i+1})$, and the argument goes through.

  The dual argument, using the fact that $d$-almost split sequences are source, yields the claim for rows.
\end{proof}

\subsection{Tensor products} 

The main tool which allows us to perform homological computations for tensor products is the K\"unneth formula over a field (\cite[VI.3.3.1]{CE56}):
\begin{lemma}
    \label{lem:kunneth}
    If $X_\bullet, Y_\bullet$ are complexes, then there is a functorial isomorphism
    \begin{align*}
      H_i(X_\bullet\otimes Y_\bullet) \cong \bigoplus_{p+q=i} H_p(X_\bullet)\otimes H_q(Y_\bullet).
    \end{align*}
\end{lemma}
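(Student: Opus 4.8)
The plan is to deduce this from the general K\"unneth theorem together with the fact that homological algebra over a field is degenerate. First I would recall the general statement (in the form of \cite[VI.3.3.1]{CE56}): whenever $X_\bullet$ is a complex of flat $k$-modules and $Y_\bullet$ is an arbitrary complex, there is a natural short exact sequence
\[
0\to \bigoplus_{p+q=i}H_p(X_\bullet)\otimes H_q(Y_\bullet)\xrightarrow{\ \alpha\ } H_i(X_\bullet\otimes Y_\bullet)\to \bigoplus_{p+q=i-1}\on{Tor}_1^k\bigl(H_p(X_\bullet), H_q(Y_\bullet)\bigr)\to 0,
\]
where $\alpha$ is the \emph{canonical} map sending the class of a pair of cycles $x\in Z_p(X_\bullet)$, $y\in Z_q(Y_\bullet)$ to the class of the cycle $x\otimes y\in (X_\bullet\otimes Y_\bullet)_{p+q}$; that $x\otimes y$ is a cycle is immediate from the Leibniz rule for the tensor differential.

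Since $k$ is a field, every $k$-module is free and hence flat, so the flatness hypothesis on $X_\bullet$ is automatic and the formula applies to any pair of complexes. Moreover $\on{Tor}_j^k(-,-)=0$ for all $j>0$, so the right-hand term vanishes and the short exact sequence collapses to the isomorphism $\alpha$ claimed in the statement. If one prefers a self-contained argument, I would instead use that every complex of $k$-vector spaces is isomorphic to the direct sum of its homology (viewed as a complex with zero differential, placed in the appropriate degrees) and a sum of contractible ``disk'' complexes of the form $0\to V\xrightarrow{\on{id}}V\to 0$. Tensoring two such decompositions of $X_\bullet$ and $Y_\bullet$ and using that a disk complex tensored with any complex is again contractible, only the products of the homology-concentrated summands survive in homology, which yields the formula directly.

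The only real subtlety, and the point I would be most careful about, is \emph{functoriality}: the splitting argument involves non-canonical choices, so one must check that the resulting isomorphism does not depend on them. This is handled by observing that the map $\alpha$ above is defined canonically at the level of cycles, independently of any splitting, and is natural in both $X_\bullet$ and $Y_\bullet$; the splitting argument then only serves to verify that this canonical $\alpha$ is bijective. Thus naturality is built into the comparison map from the start, and the stated functorial isomorphism follows.
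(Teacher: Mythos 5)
Your proof is correct and matches the paper's approach: the paper simply cites the K\"unneth theorem of Cartan--Eilenberg, and your argument is exactly that specialization (the $\on{Tor}$ term vanishes over a field), with the canonical cycle-level map $\alpha$ correctly identified as the source of functoriality. Nothing further is needed.
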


Since tensor products of projective resolutions are projective resolutions, we immediately get
\begin{lemma}
  \label{lem:kunnethext}
  If $M_1, M_2\in \on{mod}A$ and $N_1, N_2\in \on{mod}B$, then there is a functorial isomorphism
  \begin{align*} 
    \on{Ext}^i_{A\otimes B}(M_1\otimes N_1, M_2\otimes N_2) \cong \bigoplus_{p+q=i}\on{Ext}^p_A(M_1, M_2)\otimes \on{Ext}^q_B(N_1, N_2).
  \end{align*}
\end{lemma}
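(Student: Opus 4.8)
The plan is to deduce this $\on{Ext}$ formula from the chain-level K\"unneth isomorphism (Lemma \ref{lem:kunneth}) by feeding it the correct complexes. The key observation is that if $P_\bullet \to M$ is a projective resolution over $A$ and $Q_\bullet \to N$ is a projective resolution over $B$, then the total complex $P_\bullet \otimes Q_\bullet$ is a projective resolution of $M \otimes N$ over $A \otimes B$. First I would justify this resolution claim: each term $P_p \otimes Q_q$ is projective over $A \otimes B$ because tensor products of projectives are projective over the tensor algebra (a summand of a free module tensored with a summand of a free module is a summand of a free $A \otimes B$-module), and acyclicity of the total complex in positive degrees follows from Lemma \ref{lem:kunneth} applied to the two resolutions, since $H_p(P_\bullet) = 0$ for $p > 0$ and similarly for $Q_\bullet$, so the only surviving homology is $H_0(P_\bullet) \otimes H_0(Q_\bullet) = M \otimes N$ in degree $0$.

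Next I would compute $\on{Ext}$ by applying $\on{Hom}_{A\otimes B}(-, M_2 \otimes N_2)$ to the resolution $P_\bullet \otimes Q_\bullet$ of $M_1 \otimes N_1$ and taking cohomology. The crucial algebraic input is the natural isomorphism
\begin{align*}
  \on{Hom}_{A\otimes B}(P_p \otimes Q_q, M_2 \otimes N_2) \cong \on{Hom}_A(P_p, M_2) \otimes \on{Hom}_B(Q_q, N_2),
\end{align*}
which holds because the $P_p$ and $Q_q$ are finitely generated projective (so $\on{Hom}$ commutes with the relevant tensor decompositions) and $k$ is a field. Under this identification the $\on{Hom}$-complex computing $\on{Ext}^i_{A\otimes B}(M_1 \otimes N_1, M_2 \otimes N_2)$ becomes the total complex of the tensor product of the two $\on{Hom}$-complexes computing $\on{Ext}^\bullet_A(M_1, M_2)$ and $\on{Ext}^\bullet_B(N_1, N_2)$.

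At this point I would invoke the K\"unneth formula once more, now for cochain complexes of $k$-vector spaces. Since we work over a field, there are no $\on{Tor}$ correction terms, and the cohomology of the total complex splits as a direct sum of tensor products of the cohomologies of the factors, yielding exactly
\begin{align*}
  \on{Ext}^i_{A\otimes B}(M_1 \otimes N_1, M_2 \otimes N_2) \cong \bigoplus_{p+q=i} \on{Ext}^p_A(M_1, M_2) \otimes \on{Ext}^q_B(N_1, N_2).
\end{align*}
Functoriality is inherited from the functoriality asserted in Lemma \ref{lem:kunneth} together with the naturality of the $\on{Hom}$-tensor identifications. The main obstacle, and the only step requiring genuine care rather than bookkeeping, is establishing the $\on{Hom}$-commutes-with-$\otimes$ isomorphism in full naturality and checking that it is compatible with the two differentials simultaneously, so that it assembles into an isomorphism of double complexes rather than merely a degreewise isomorphism; everything else reduces to two applications of the already-granted K\"unneth lemma over the field $k$.
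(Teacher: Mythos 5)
Your argument is correct and is exactly the route the paper takes: the paper's proof consists of the single remark that tensor products of projective resolutions are projective resolutions, followed by an appeal to the K\"unneth formula (Lemma \ref{lem:kunneth}), which is precisely what you have spelled out in detail. Your additional care with the $\on{Hom}$--$\otimes$ isomorphism for finitely generated projectives over a field fills in the bookkeeping the paper leaves implicit.
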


The total tensor product of complexes is a functor in a natural way, so we can speak of tensor products of maps of complexes (for a very general treatment of how this is done, 
see \cite[IV.4 and IV.5]{CE56}). 
An important result which is proved in \cite{Pas17} for $d$-representation finite algebras is also true for $d$-complete algebras, namely:
\begin{thm}
  Let $A, B$ be $n$- respectively $m$-complete algebras. Let $\on{Cone}\varphi$ and $\on{Cone}\psi$ be $n$- respectively $m$-almost split sequences starting in
  $\on{add}\tau_n^iDA$ respectively $\on{add}\tau_m^i DB$ for some common $i> 0$.
  Then 
  $\on{Cone}(\varphi\otimes \psi)$ is an $(n+m)$-almost split sequence in $\mathcal M(A\otimes B)$.
\end{thm}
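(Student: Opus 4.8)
The plan is to show that $\on{Cone}(\varphi\otimes\psi)$ satisfies the characterization of $(n+m)$-almost split sequences given by the earlier Lemma, namely that for every indecomposable $Z\in\mathcal M(A\otimes B)$ the induced complexes $F_Z(\on{Cone}(\varphi\otimes\psi))$ and $G_Z(\on{Cone}(\varphi\otimes\psi))$ are exact. Since every indecomposable in $\mathcal M(A\otimes B)$ is of the form $X\otimes Y$ with $X\in\mathcal M(A)$ and $Y\in\mathcal M(B)$ (as observed in Section 3 via the K\"unneth formula for $\tau$), it suffices to test exactness against such tensor indecomposables. First I would record that $\on{Cone}(\varphi\otimes\psi)$ is a complex of modules in $\mathcal M(A\otimes B)$ whose terms are direct sums of tensor products $E_p\otimes F_q$ and $E_p\otimes G_q$ (where $\varphi\colon E_\bullet\to F_\bullet$ and $\psi\colon F'_\bullet\to G'_\bullet$, relabeling to avoid a clash), all of whose factors lie in the two consecutive slices $\mathcal S(i),\mathcal S(i-1)$ on each side — this is exactly the slice structure the preceding Theorem gives for $\on{Cone}\varphi$ and $\on{Cone}\psi$.

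The key homological input is the K\"unneth isomorphism of Lemma \ref{lem:kunneth} applied at the level of complexes. For a tensor test object $Z=X\otimes Y$, the functor $\on{Hom}_{A\otimes B}(X\otimes Y,-)$ applied to a tensor product of complexes factors, by the $\on{Hom}$-version of K\"unneth (Lemma \ref{lem:kunnethext}) together with the compatibility of $\otimes$ with the bar/total complex, into the total complex of $\on{Hom}_A(X,-)\otimes\on{Hom}_B(Y,-)$ applied to the factors. Concretely, I would argue that $F_{X\otimes Y}(\on{Cone}(\varphi\otimes\psi))$ is quasi-isomorphic to the total complex of $F_X(\on{Cone}\varphi)\otimes F_Y(\on{Cone}\psi)$ — the only subtlety being the $\on{rad}$ versus $\on{Hom}$ correction in the $0$-th spot, which I would handle by showing the radical of a map into a tensor product decomposes compatibly, using that $\on{top}$ is multiplicative over $k$ (a perfect field) so that $\on{rad}_{A\otimes B}(X\otimes Y, C\otimes D)$ matches the expected piece of the tensor product of radicals. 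Once this identification is in place, exactness follows from the K\"unneth formula: since $F_X(\on{Cone}\varphi)$ and $F_Y(\on{Cone}\psi)$ are exact (because $\on{Cone}\varphi$ and $\on{Cone}\psi$ are source sequences, hence their $F$-complexes are exact by the earlier Lemma), their tensor product has vanishing homology, and the total complex is exact. The dual computation with $G_{X\otimes Y}$ handles the sink condition symmetrically.

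I would then separately verify the two bookkeeping conditions: that all maps in $\on{Cone}(\varphi\otimes\psi)$ are radical, and that the cone is concentrated in the correct degrees $0,\dots,n+m+1$ so that the Lemma's degree hypotheses apply. Radicality of $\varphi\otimes\psi$ and of the cone differentials follows from the multiplicativity of $\on{rad}$ over $k$ together with the fact (from the slice Theorem) that all constituent maps of $E_\bullet,F_\bullet,G'_\bullet$ and the components of $\varphi,\psi$ are radical; here Lemma \ref{lem:nozeros} is what guarantees no degeneration to a split summand. The degree count is immediate from $\on{Cone}\varphi$ living in degrees $0,\dots,n+1$ and $\on{Cone}\psi$ in $0,\dots,m+1$, so that the total complex of their tensor product lands in $0,\dots,n+m+2$ and the cone construction shifts appropriately; I would confirm the endpoints do not overshoot by checking the top and bottom terms explicitly.

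The main obstacle I anticipate is the careful reconciliation of the $\on{rad}$-decorated $0$-th terms of the $F$- and $G$-functors with the clean tensor-product form predicted by K\"unneth — in other words, proving that the correction "replace $\on{Hom}$ by $\on{rad}$ in degree $0$" is itself compatible with the tensor factorization, rather than introducing spurious homology at the boundary. This is where perfectness of $k$ is essential (it makes $\on{top}$ and hence $\on{rad}$ behave multiplicatively under $\otimes_k$), and where I expect the bulk of the genuine work to lie; the interior exactness is a formal consequence of K\"unneth once the boundary is understood. Since the corresponding statement for $d$-representation finite algebras is already established in \cite{Pas17}, I would model the argument closely on that proof, checking at each step that only the weaker structural facts available for $d$-complete algebras (the slice decomposition, Proposition \ref{prop:hom}, and Lemma \ref{lem:equiv}) are actually used.
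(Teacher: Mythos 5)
Your outer framework agrees with the paper's: reduce to indecomposable test objects of the form $M\otimes N$ and verify exactness of $F_{M\otimes N}$ and $G_{M\otimes N}$ applied to the sequence, using the K\"unneth formula. But the central reduction you propose does not work, because $\on{Cone}(\varphi\otimes\psi)$ is \emph{not} the total complex of $\on{Cone}\varphi\otimes\on{Cone}\psi$, not even up to an easily controlled boundary correction. Writing $\varphi\colon A^0_\bullet\to A^1_\bullet$ and $\psi\colon B^0_\bullet\to B^1_\bullet$, the cone of $\varphi\otimes\psi$ has $j$-th term $(A^1_\bullet\otimes B^1_\bullet)_j\oplus(A^0_\bullet\otimes B^0_\bullet)_{j-1}$ and is concentrated in degrees $0,\dots,n+m+1$, whereas the total complex of $\on{Cone}\varphi\otimes\on{Cone}\psi$ contains in addition all the cross terms $A^1_p\otimes B^0_q$ and $A^0_p\otimes B^1_q$ and lives in degrees $0,\dots,n+m+2$ (this length discrepancy, which you propose to ``confirm at the endpoints,'' is a symptom of the mismatch, not a bookkeeping issue --- it is precisely why the construction takes the cone of a tensor product of maps rather than the tensor product of the two sequences). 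Consequently $F_{M\otimes N}(\on{Cone}(\varphi\otimes\psi))$ and $\on{Tot}\bigl(F_M(\on{Cone}\varphi)\otimes F_N(\on{Cone}\psi)\bigr)$ differ termwise in every degree, there is no evident comparison map between them, and asserting that they are quasi-isomorphic is essentially equivalent to the theorem itself. Moreover, the $\on{rad}$-versus-$\on{Hom}$ discrepancy is not confined to the $0$-th spot: since $\on{rad}(X\otimes Y,C\otimes D)=\on{rad}(X,C)\otimes\on{Hom}(Y,D)+\on{Hom}(X,C)\otimes\on{rad}(Y,D)$, correction terms appear in every total degree in which one tensor factor sits in degree $0$.

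The paper's proof never tensors the two cones. It uses instead that $F_Z(\on{Cone}\eta)$ is exact if and only if $\tilde F_Z(\eta)$ is a quasi-isomorphism, and compares $\tilde F_M(\varphi)\otimes\tilde F_N(\psi)$ (a tensor product of quasi-isomorphisms, hence a quasi-isomorphism over a field) with $\tilde F_{M\otimes N}(\varphi\otimes\psi)$ via a commutative square whose top map is the $\on{Hom}$--K\"unneth isomorphism. The entire burden then falls on showing that the canonical map $F_M(A^1_\bullet)\otimes F_N(B^1_\bullet)\to F_{M\otimes N}(A^1_\bullet\otimes B^1_\bullet)$ is a quasi-isomorphism, which is done by computing its cokernel explicitly as $F_M(A^1_\bullet)\otimes\on{top}(N,B^1_0)\oplus\on{top}(M,A^1_0)\otimes F_N(B^1_\bullet)$ and checking that this complex is exact. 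That cokernel computation is the genuine work, and your proposal has no counterpart to it. (Two smaller points: the multiplicativity of $\on{top}$ that you invoke is correct and is indeed used elsewhere in the paper, but it does not repair the structural mismatch above; and radicality of the maps in $\on{Cone}(\varphi\otimes\psi)$ follows from the displayed formula for $\on{rad}$ of a tensor product, not from Lemma \ref{lem:nozeros}, which concerns zero rows and columns in almost split sequences.)
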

\begin{proof}
  This is proved in the same way as in \cite[Section 3.3]{Pas17}. For convenience, we present the main points of the proof. By definition $\on{Cone}(\varphi\otimes \psi)$ is a complex bounded 
  between 0 and $n+m+1$, it is exact by the K\"unneth formula, and it is easy to check that all maps appearing are radical.
  Now $\varphi: A_\bullet^0 \to A_\bullet^1$ and $\psi: B_\bullet^0 \to B_\bullet^1$, and by assumption we have that $A_j^0 \in\on{add}\tau_n^iDA$, $A_j^1\in \on{add}\tau_n^{i-1}DA$, 
  $B_j^0 \in \on{add}\tau_m^iDB$ and $B_j^1\in \on{add}\tau_m^{i-1}DB$ for every $j$ since $A_j\otimes B_j\in \mathcal M(A\otimes B)$.
  Let now $M\otimes N$ be any indecomposable in $\mathcal M(A\otimes B)$. We need to prove that $F_{M\otimes N}(\on{Cone}(\varphi\otimes \psi))$ is exact. 
  As in \cite[Section 2.3]{Pas17}, for a radical map of radical complexes $\eta:A_\bullet\to B_\bullet$ and a module $X$ we can define $\tilde F_X(\eta) = \eta\circ -: 
  \on{Hom}(X, A_\bullet) \to F_X(B_\bullet)$.
  Then in our setting there is a commutative diagram
  $$
	\xymatrix{
	  \on{Hom}(M, A^0_\bullet)\otimes \on{Hom}(N, B^0_\bullet) \ar[r]^-\cong\ar[d]_*{ \tilde F_M(\varphi)\otimes \tilde F_N(\psi) } 
	  & \on{Hom}(M\otimes N, A^0_\bullet\otimes B^0_\bullet) \ar[d]^*{\tilde F_{M\otimes N}(\varphi\otimes \psi) }\\
	  F_M(A^1_\bullet) \otimes F_N(B^1_\bullet)\ar[r]
	  &  F_{M\otimes N}(A_\bullet^1\otimes B_\bullet^1).
	}
	$$
	Now $F_{M\otimes N}(\on{Cone}(\varphi\otimes \psi))$ is exact if and only if $\tilde F_{M\otimes N}(\varphi\otimes \psi)$ is a quasi-isomorphism. 
	The left map in the diagram $\tilde F_{M}(\varphi)\otimes \tilde F_N(\psi)$ is a quasi-isomorphism since $\on{Cone}(\varphi)$ and $\on{Cone}(\psi)$ 
	are $n$- respectively $m$-almost split sequences. Then it is enough to prove that the bottom map is a quasi-isomorphism, and this is done by showing 
	that its cokernel is isomorphic to 
	$$F_M(A_\bullet^1)\otimes \on{top}(N,B_0^1) \oplus \on{top}(M,A_0^1)\otimes F_N(B_\bullet^1)$$
	and then by easy verification that the above cokernel is exact. The computation of the cokernel is done explicitly in 
	\cite[Section 3.3, pp.660--662]{Pas17}.
\end{proof}

\begin{cor}
  \label{cor:sequences}
  Let $A, B$ be $n$- respectively $m$-complete algebras. Then $\mathcal M( A\otimes B)$ has $(n+m)$-almost split sequences.
\end{cor}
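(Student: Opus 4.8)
The plan is to reduce the corollary to the preceding theorem by exhibiting, for each indecomposable in $\mathcal M_I(A\otimes B)$ (dually $\mathcal M_P$), an explicit $(n+m)$-almost split sequence. The previous theorem already supplies such sequences in the special form $\on{Cone}(\varphi\otimes\psi)$, but only when the source is in $\on{add}\tau_n^iDA\otimes\tau_m^iDB$ for a \emph{common} index $i>0$. So the first step is to understand which indecomposables of $\mathcal M(A\otimes B)$ actually arise this way. Recall from the Main result section that every indecomposable in $\mathcal M(A\otimes B)$ has the form $X\otimes Y$ with $X\in\mathcal M(A)$, $Y\in\mathcal M(B)$, and that $\tau_{n+m}(X\otimes Y)=\tau_n X\otimes\tau_m Y$ by the K\"unneth formula. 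Using the slice decompositions $X\in\mathcal S_A(i)$, $Y\in\mathcal S_B(j)$, the object $\tau_n^iDA\otimes\tau_m^jDB$ sits in slice $\min(i,j)$-type data, so I must handle the diagonal case $i=j$ (where the theorem applies directly) and argue that the off-diagonal indecomposables either are not needed or are covered by applying $\tau_{n+m}^{\pm}$.

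The key step is therefore the following reduction. Take an indecomposable $Z\in\mathcal M_P(A\otimes B)$; by definition it has no injective summand removed by $\tau_{n+m}$, so it lies in some slice $\mathcal S(i)$ with $i>0$ after tensoring, i.e. $Z\cong\tau_n^iDA'\otimes\tau_m^iDB'$ for suitable indecomposable summands $DA'$ of $D A$ and $DB'$ of $DB$, with a \emph{common} exponent $i$. This is exactly the hypothesis of the theorem, so $\on{Cone}(\varphi\otimes\psi)$ — built from the $n$-almost split sequence starting in $\on{add}\tau_n^iDA$ and the $m$-almost split sequence starting in $\on{add}\tau_m^iDB$ — is an $(n+m)$-almost split sequence, and restricting to the summand through which $Z$ appears gives the sequence starting in $Z$. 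The honest point to verify is that \emph{every} indecomposable of $\mathcal M_P(A\otimes B)$ really does occur with equal slice indices on the two tensor factors; this follows because an indecomposable lies in $\mathcal M_I$ (has a nonzero injective, i.e. $\tau$-zero, behaviour) precisely when at least one factor is in slice $0$, and an indecomposable lies in $\mathcal M_P$ precisely when $\tau_{n+m}$ kills it, i.e. $\tau_nX\otimes\tau_mY=0$, forcing one factor to be in $\mathcal P$. Tracking these two conditions through the slice grading pins down the common-index description.

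I expect the main obstacle to be precisely this bookkeeping: showing that the indecomposables of $\mathcal M(A\otimes B)$ needed for the almost split sequences are exactly those with matching slice indices, and that the off-diagonal objects $\tau_n^iDA\otimes\tau_m^jDB$ with $i\neq j$ either lie in $\mathcal P$ or $\on{add}D\Lambda$ (and so need no source/sink sequence) or are obtained from the diagonal ones by applying the equivalences $\tau_{n+m}^{\pm}\colon\mathcal M_P\leftrightarrow\mathcal M_I$ from Lemma~\ref{lem:equiv}. Once the diagonal objects are dealt with by the theorem, the general case should follow by applying $\tau_{n+m}^{\pm}$ repeatedly, since these translations send $(n+m)$-almost split sequences to $(n+m)$-almost split sequences within $\mathcal M$. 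The remaining content is then purely formal: assemble, for each indecomposable of $\mathcal M_I$ and each indecomposable of $\mathcal M_P$, the required sequence, thereby verifying the definition of ``$\mathcal M(A\otimes B)$ has $(n+m)$-almost split sequences.''
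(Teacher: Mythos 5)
Your overall strategy is the intended one: the corollary is meant to follow directly from the preceding theorem once one observes that every indecomposable of $\mathcal M(A\otimes B)$ is a summand of $\tau_n^iDA\otimes\tau_m^iDB$ for a \emph{common} index $i$. However, several concrete steps in your write-up are wrong. First, you have inverted the definitions: $\mathcal M_P$ consists of the modules with \emph{no} summand in $\mathcal P$, i.e.\ those \emph{not} killed by $\tau_{n+m}$, and $\mathcal M_I$ consists of those with no injective summand; your glosses (``lies in $\mathcal M_P$ precisely when $\tau_{n+m}$ kills it'', ``lies in $\mathcal M_I$ \dots\ $\tau$-zero behaviour'') describe $\mathcal P$ and $\on{add}D\Lambda$ instead. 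Relatedly, the claim that an indecomposable $Z\in\mathcal M_P$ lies in a slice $\mathcal S(i)$ with $i>0$ is false: injective modules can belong to $\mathcal M_P$ (in the commutative-square example of Section 6 the simple injective does, and the unique $2$-almost split sequence ends there). What you actually need is that for $Z=Z_1\otimes Z_2\in\mathcal M_P$ in slice $i\geq 0$ the required sequence \emph{ends} at $Z$ and \emph{starts} at $\tau_{n+m}Z$, which sits in slice $i+1>0$; the theorem should therefore be invoked with index $i+1$, applied to the $n$- and $m$-almost split sequences ending at $Z_1$ and $Z_2$, which exist because $\tau_{n+m}Z=\tau_nZ_1\otimes\tau_mZ_2\neq 0$ forces $Z_1\in\mathcal M_P(A)$ and $Z_2\in\mathcal M_P(B)$.

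Second, the ``off-diagonal'' objects $\tau_n^iDA\otimes\tau_m^jDB$ with $i\neq j$ that you spend the second half of the argument trying to cover are a red herring: $\mathcal M(A\otimes B)=\on{add}\{\tau_{n+m}^iD\Lambda\mid i\geq0\}$ and $\tau_{n+m}^iD\Lambda=\tau_n^iDA\otimes\tau_m^iDB$, so only diagonal objects occur in $\mathcal M(A\otimes B)$ in the first place, and no source or sink sequences are required for anything else. In particular the proposed fallback of ``applying $\tau_{n+m}^{\pm}$ repeatedly'' is unnecessary, and the assertion it relies on --- that $\tau_{n+m}^{\pm}$ carries $(n+m)$-almost split sequences to $(n+m)$-almost split sequences --- is nowhere established in the paper and should not be assumed. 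Once the diagonal structure of $\mathcal M(A\otimes B)$ and the correct index shift are in place, the corollary really is immediate from the theorem, which is all the paper intends.
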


Notice that the above theorem does not require the algebra $A\otimes B$ to be $(n+m)$-representation finite (in which case we know a priori that $(n+m)$-almost split sequences must exist).
In the setting of \cite{Pas17}, this result is about describing the structure of such sequences. In the setting of $d$-complete algebras, this result is used to prove that $(n+m)$-almost split sequences 
exist, whereas it is a priori not clear that they should.

One can also say something about injective modules (which are not the starting point of any $d$-almost split sequence). 

\begin{prop}
  \label{prop:source}
  Let $A, B$ be $n$- respectively $m$-complete algebras, and let $\Lambda = A\otimes B$. Then for every injective $\Lambda$-module $X\otimes Y $ there is a source sequence
  \begin{align*}
    X\otimes Y\to E_{n+m}\to \cdots \to E_1\to 0
  \end{align*}
  in $\mathcal M(\Lambda)$.
\end{prop}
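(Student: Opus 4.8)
The plan is to build the source sequence of $X\otimes Y$ as the total complex of the tensor product of a source sequence of $X$ over $A$ and a source sequence of $Y$ over $B$, in direct analogy with the construction of $(n+m)$-almost split sequences in the preceding theorem. First I would record the relevant facts about the factors. Since $A$ is $n$-complete, the injective module $X$ admits a source sequence $\mathbb{X}_\bullet = (X\to A_n\to\cdots\to A_1\to 0)$ in $\mathcal M(A)$. Because $X\in\mathcal S(0)=\on{add}DA$ and $\on{Hom}_A(\mathcal S(0),\mathcal S(j))=0$ for $j>0$ by Proposition \ref{prop:hom}, the functor $\on{rad}_A(X,-)$ is supported on $\on{add}DA$, and the same slice argument used in the theorem on the structure of $d$-almost split sequences (together with the minimality encoded in Lemma \ref{lem:nozeros}) forces every term $A_i$ to lie in $\on{add}DA$. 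Symmetrically, $Y$ has a source sequence $\mathbb{Y}_\bullet = (Y\to B_m\to\cdots\to B_1\to 0)$ with all terms in $\on{add}DB$.

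Next I would set $E_\bullet := \on{Tot}(\mathbb{X}_\bullet\otimes\mathbb{Y}_\bullet)$. This is a complex $X\otimes Y\to E_{n+m}\to\cdots\to E_1\to 0$ whose terms all lie in $\on{add}(DA\otimes DB)=\on{add}D\Lambda\subseteq\mathcal M(\Lambda)$, and whose differentials, being assembled from those of $\mathbb{X}_\bullet$ and $\mathbb{Y}_\bullet$ by tensoring with identities, are radical; thus condition (1) of a source sequence holds. It then remains to verify condition (2), which by the lemma characterising source sequences in terms of $G_X$ amounts to showing that $G_{M\otimes N}(E_\bullet)$ is exact for every indecomposable $M\otimes N\in\mathcal M(\Lambda)$.

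Here I would first restrict the class of targets. By the degree-$0$ case of Lemma \ref{lem:kunnethext} we have $\on{Hom}_\Lambda(A_i\otimes B_j, M\otimes N)\cong\on{Hom}_A(A_i,M)\otimes\on{Hom}_B(B_j,N)$, and since $A_i\in\on{add}DA$, Proposition \ref{prop:hom} (applied to $A$) forces the first factor to vanish unless $M\in\on{add}DA$; likewise $N\in\on{add}DB$. Hence $G_{M\otimes N}(E_\bullet)=0$ unless $M\otimes N$ is injective, and for those targets exactness is the only thing to check. For injective $M\otimes N$ I would compare $G_{M\otimes N}(E_\bullet)$ with $\on{Tot}\bigl(G^A_M(\mathbb{X}_\bullet)\otimes G^B_N(\mathbb{Y}_\bullet)\bigr)$, where $G^A,G^B$ are the functors computed over $A$ and $B$. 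The source conditions for $X$ over $A$ and $Y$ over $B$ say exactly that $G^A_M(\mathbb{X}_\bullet)$ and $G^B_N(\mathbb{Y}_\bullet)$ are exact, so by the K\"unneth formula (Lemma \ref{lem:kunneth}) their total tensor product is exact. Away from the source term the two complexes coincide via the degree-$0$ K\"unneth isomorphism, so the entire question collapses onto the source position.

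The main obstacle is precisely this source position. There $G_{M\otimes N}(E_\bullet)$ records $\on{rad}_\Lambda(X\otimes Y, M\otimes N)$, whereas the total tensor product records the smaller space $\on{rad}_A(X,M)\otimes\on{rad}_B(Y,N)$; the two are related by $\on{rad}_\Lambda(X\otimes Y,M\otimes N)=\on{rad}_A(X,M)\otimes\on{Hom}_B(Y,N)+\on{Hom}_A(X,M)\otimes\on{rad}_B(Y,N)$, so the discrepancy is a sum of correction terms involving $\on{top}_A(X,M)$ and $\on{top}_B(Y,N)$. I expect to dispose of it by exactly the cokernel computation carried out in the proof of the preceding theorem, following \cite[Section 3.3]{Pas17}: one checks that the comparison map of complexes has an exact cone, whose only possible nonzero homology would sit at the source term, and concludes that $G_{M\otimes N}(E_\bullet)$ is itself exact. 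This rad-versus-top bookkeeping is the single genuinely technical point; everything else is routine manipulation with the slice decomposition and the K\"unneth formula.
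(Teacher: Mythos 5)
Your proposal is correct and takes essentially the same route as the paper's proof: both form the total tensor product of the source sequences of $X$ over $A$ and of $Y$ over $B$, establish exactness of the resulting Hom complexes on $\mathcal M(\Lambda)$ via the K\"unneth formula, and resolve the discrepancy at the source term using $\on{rad}(X\otimes Y, M\otimes N)=\on{rad}(X,M)\otimes\on{Hom}(Y,N)+\on{Hom}(X,M)\otimes\on{rad}(Y,N)$. Your extra observations are correct --- indeed the fact that all terms of the source sequence of an injective lie in $\on{add}DA$, resp.\ $\on{add}DB$, is actually needed for the terms $C_i\otimes D_j$ to lie in $\mathcal M(\Lambda)$ at all, a point the paper leaves implicit --- and the only imprecision is your claim that the two comparison complexes coincide away from the source term (they differ along the entire row and column through the source position), but the cokernel computation from \cite[Section 3.3]{Pas17} that you defer to handles exactly that.
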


\begin{proof}
   Since $X$ and $Y$ are injective, we have sequences in $\mathcal M(A)$ respectively $\mathcal M(B)$
   \begin{align*} 
   X_\bullet = X\to C_n\to \cdots \to C_1 \to 0\\
   Y_\bullet = Y\to D_m\to \cdots \to D_1\to 0
 \end{align*}
 such that 
 \begin{align*}
   0\to \on{Hom}(C_1, M) \to \cdots \to \on{Hom}(X, M) \to \on{top}(X, M) \to 0,\\
   0\to \on{Hom}(D_1, N) \to \cdots \to \on{Hom}(Y, N) \to \on{top}(Y, N) \to 0
 \end{align*}
	are exact for all indecomposables $M, N$.
Now consider the homology of $X_\bullet\otimes Y_\bullet$. 
\begin{align*}
  H_i (X_\bullet\otimes Y_\bullet) = \bigoplus_{p+q=i}H_p(X_\bullet)\otimes H_q(Y_\bullet) = 
  \begin{cases}
    H_0(X_\bullet)\otimes H_0(Y_\bullet) \text{ if } i = n+m+2\\
    0 \text{ else.}
  \end{cases}
\end{align*}
So we have at least an exact sequence 
\begin{align*}
 X_\bullet\otimes Y_\bullet = X\otimes Y\to \cdots \to C_1\otimes D_1\to 0.
\end{align*}
Apply $\on{Hom}(- , M\otimes N)$ to this sequence and compute homology. 
\begin{align*}
  H_i(\on{Hom}(X_\bullet\otimes Y_\bullet, M\otimes N)) &= H_i\left( \on{Hom}(X_\bullet, M)\otimes \on{Hom}(Y_\bullet, M) \right) =\\
  &= \bigoplus_{p+q=i} H_p(\on{Hom}(X_\bullet, M))\otimes H_q( \on{Hom}(Y_\bullet, M) ) = \\
  &= 
  \begin{cases}
    \on{top}(X, M)\otimes \on{top}(Y, N) \text{ if } i = 0\\
    0 \text{ else.}
  \end{cases}
\end{align*}
We will be done if we prove that $X_\bullet\otimes Y_\bullet$ is source, which amounts now to prove that 
\begin{align*}
  \on{top}(X\otimes Y, M\otimes N) = H_0(\on{Hom}(X_\bullet\otimes Y_\bullet, M\otimes N)) = \on{top}(X, M)\otimes \on{top}(Y, N).
\end{align*}
By tensoring the complexes
\begin{align*}
  0\to\on{rad}(X, M)\to \on{Hom}(X,M)
\end{align*} 
and
\begin{align*}
  0\to\on{rad}(Y, N)\to \on{Hom}(Y, N)  
\end{align*}
and looking at homology, one finds an exact sequence
\begin{align*}
  0&\to \on{rad}(X, M)\otimes \on{Hom}(Y, N) + \on{Hom}(X, M)\otimes \on{rad}(Y, N) \to \\
  &\to\on{Hom}(X, M)\otimes \on{Hom}(Y, N) \to \on{top}(X, M)\otimes \on{top}(Y, N)\to 0.
\end{align*}
Now the middle term is isomorphic to $\on{Hom}(X\otimes Y, M\otimes N)$, and this isomorphism induces an isomorphism between the first term and $\on{rad}(X\otimes Y, M\otimes N)$, 
hence by looking at cokernels we get
\begin{align*}
  \on{top}(X\otimes Y, M\otimes N) &\cong \frac{\on{Hom}(X\otimes Y, M\otimes N)}{\on{rad}(X\otimes Y, M\otimes N)} \\
  &\cong \frac{\on{Hom}(X, M)\otimes \on{Hom}(Y, N)}{\on{rad}(X, M)\otimes \on{Hom}(Y, N)
  + \on{Hom}(X, M)\otimes \on{rad}(Y, N)}\\
  &\cong \on{top}(X, M)\otimes \on{top}(Y, N)
\end{align*}
and we are done.
\end{proof}

\begin{lemma}
  \label{lem:homog}
  Let $A,B$ be $n$- respectively $m$-complete algebras. Then the following are equivalent:
  \begin{enumerate}
    \item $T_{A\otimes B} \cong T_A\otimes T_B$.
    \item $\exists l\in \N$ such that $A$ and $B$ are $l$-homogeneous.
  \end{enumerate}
\end{lemma}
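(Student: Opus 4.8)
The plan is to compute both sides of the claimed isomorphism explicitly in terms of the numerical data $(l_i)$ attached to the injectives, and then read off when they agree. First I would set up notation following the discussion after Definition~\ref{def:ncomplete}: write the indecomposable injective $A$-modules as $X_1,\dots,X_p$ and let $a_i\in\N$ be the unique integer with $\tau_n^{a_i-1}X_i\in\mathcal P(A)$, so that $T_A=\bigoplus_i\tau_n^{a_i-1}X_i$; similarly let $Y_1,\dots,Y_q$ be the indecomposable injective $B$-modules with integers $b_j$, so $T_B=\bigoplus_j\tau_m^{b_j-1}Y_j$. Since $k$ is a field we have $D\Lambda=DA\otimes DB$, so the indecomposable injective $\Lambda$-modules are precisely the $X_i\otimes Y_j$, and iterating the identity $\tau_{n+m}(X\otimes Y)=\tau_nX\otimes\tau_mY$ gives $\tau_{n+m}^k(X_i\otimes Y_j)=\tau_n^kX_i\otimes\tau_m^kY_j$ (cf.\ Lemma~\ref{lem:kunnethext}).

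The key computation is to determine, for each injective $X_i\otimes Y_j$ of $\Lambda$, the integer $c_{ij}$ with $\tau_{n+m}^{c_{ij}-1}(X_i\otimes Y_j)\in\mathcal P(\Lambda)$. Because a tensor product of modules over a field vanishes if and only if one of the factors does, $\tau_{n+m}^k(X_i\otimes Y_j)=\tau_n^kX_i\otimes\tau_m^kY_j$ is nonzero exactly when $k<a_i$ and $k<b_j$; hence the largest such $k$ is $\min(a_i,b_j)-1$ and so $c_{ij}=\min(a_i,b_j)$. Using $T_\Lambda=\bigoplus_{i,j}\tau_{n+m}^{c_{ij}-1}(X_i\otimes Y_j)$ this yields
\[
T_\Lambda=\bigoplus_{i,j}\tau_n^{\min(a_i,b_j)-1}X_i\otimes\tau_m^{\min(a_i,b_j)-1}Y_j,\qquad
T_A\otimes T_B=\bigoplus_{i,j}\tau_n^{a_i-1}X_i\otimes\tau_m^{b_j-1}Y_j.
\]

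Finally I would compare the two displays summand by summand, using two rigidity facts: (i) tensor factorization of indecomposables is unique, so $U\otimes V\cong U'\otimes V'$ forces $U\cong U'$ and $V\cong V'$; and (ii) each indecomposable of $\mathcal M$ lies in exactly one slice, since $\on{Hom}(\mathcal S(s),\mathcal S(t))=0$ for $s<t$ by Proposition~\ref{prop:hom}, while by Lemma~\ref{lem:equiv} the functors $\tau^\pm$ restrict to equivalences $\mathcal M_P\leftrightarrow\mathcal M_I$, so for fixed $s$ the nonzero modules $\tau_n^sX_i$ are pairwise non-isomorphic and those with different $s$ lie in different slices. For $(2)\Rightarrow(1)$: if $a_i=b_j=l$ for all $i,j$ then $\min(a_i,b_j)=l$ throughout and the two sums coincide term by term. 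For $(1)\Rightarrow(2)$: any summand $\tau_n^{a_i-1}X_i\otimes\tau_m^{b_j-1}Y_j$ of $T_A\otimes T_B$ is then a summand of $T_\Lambda\subseteq\mathcal M(\Lambda)$, but indecomposables of $\mathcal M(\Lambda)$ have the matched-exponent form $\tau_n^kX\otimes\tau_m^kY$, so by (i) and (ii) the two factors force $a_i-1=b_j-1$; thus $a_i=b_j$ for all $i,j$, whence all $a_i$ and $b_j$ share a common value $l$, i.e.\ $A$ and $B$ are $l$-homogeneous. The main obstacle I expect is the careful justification of $c_{ij}=\min(a_i,b_j)$ together with the uniqueness of tensor factorization; both rest on the fact (already built into the paper's setup) that the indecomposables of $\mathcal M(\Lambda)$ are genuine indecomposable tensor products with matched exponents and are determined factorwise, after which the rest is bookkeeping with the slice structure.
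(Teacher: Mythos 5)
Your argument is correct, and it proves the lemma by a genuinely different route than the paper. You compute $T_{A\otimes B}$ in closed form: since over a field a tensor product vanishes if and only if one factor does, the exponent attached to the injective $X_i\otimes Y_j$ is $\min(a_i,b_j)$, so $T_{A\otimes B}\cong\bigoplus_{i,j}\tau_n^{\min(a_i,b_j)-1}X_i\otimes\tau_m^{\min(a_i,b_j)-1}Y_j$, and comparing this with $T_A\otimes T_B=\bigoplus_{i,j}\tau_n^{a_i-1}X_i\otimes\tau_m^{b_j-1}Y_j$ summand by summand (via uniqueness of tensor factorisation and disjointness of the slices $\mathcal S(s)$) gives both implications at once. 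The paper instead proves $(1)\Rightarrow(2)$ by contradiction: assuming $l_i>l_j$, it takes the summand $\tau_n^{l_i-1}E_i\otimes\tau_m^{l_j-1}F_j$ of $T_A\otimes T_B\cong T_{A\otimes B}$ and applies $\tau_{n+m}^-$ repeatedly to reach $\tau_n^{l_i-l_j}E_i\otimes F_j$, which is annihilated by one further application of $\tau_{n+m}^-$ yet is not injective, and no module of $\mathcal M(A\otimes B)$ can behave this way. Both arguments rest on the same two ingredients, namely $\tau_{n+m}^k(X\otimes Y)=\tau_n^kX\otimes\tau_m^kY$ from Section 3 and the vanishing criterion for tensor products over a field; your version buys an explicit formula for $T_{A\otimes B}$ and a more symmetric treatment of the two directions, at the cost of invoking unique factorisation of an indecomposable $\Lambda$-module as $U\otimes V$. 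You assert this rather than prove it, but it is immediate from Krull--Schmidt after restricting along $A\to\Lambda$ (which turns $U\otimes V$ into $U^{\oplus\dim_kV}$) and along $B\to\Lambda$, so there is no real gap.
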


\begin{proof}
  $(2)\Rightarrow (1)$ is clear by definition.

  To prove $(1)\Rightarrow (2)$, assume it does not hold, that is $T_{A\otimes B} \cong T_A\otimes T_B$ but there are $i, j$ such that $l_i\neq l_j$ for 
  the corresponding indecomposable injectives $E_i\in \on{add}DA$ and $F_j\in\on{add}DB$. We can assume 
  that $l_i>l_j$, otherwise the proof is similar.
  Call $X_{ij} =\tau_n^{l_i-1}E_i\otimes \tau_m^{l_j-1}F_j\in \on{add}T_{A\otimes B} $.
  Then $$\tau_{m+n}^{-l_j+1}(X_{ij}) = \tau_{n}^{l_i-l_j}E_i\otimes F_j
  $$
  is not injective, since by assumption $\tau_{n}^{l_i-l_j}E_i$ is not injective. On the other hand, modules in $\mathcal M(A\otimes B)$ which satisfy $\tau_{m+n}X = 0$ are
  precisely the injective $A\otimes B$-modules, and so $\tau_{m+n}^{-l_j+1}(X_{ij})$ is not in $\mathcal M$, contradiction.
\end{proof}

\subsection{Acyclicity}\label{sec:acycl}

We collect here some lemmas about acyclicity which we will use.

\begin{lemma}
  \label{lem:dual}
  The module $\Lambda_\Lambda$ is directed if and only if the module $D_{\Lambda}\Lambda$ is directed.
\end{lemma}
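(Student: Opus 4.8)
The plan is to produce an equivalence of additive categories between $\on{add}\Lambda_\Lambda$ and $\on{add}D\Lambda$ that carries one preorder to the other, and then to note that antisymmetry of $<$ is an invariant of such an equivalence. Concretely, I would use the Nakayama functor $\nu = D\on{Hom}_\Lambda(-,\Lambda)$. Restricted to projectives it is the composite of the duality $\on{Hom}_\Lambda(-,\Lambda)\colon \on{add}\Lambda_\Lambda \to \on{add}{}_\Lambda\Lambda$ with the duality $D\colon \Lambda\on{mod}\to\on{mod}\Lambda$, hence a \emph{covariant} equivalence of additive categories $\on{add}\Lambda_\Lambda \xrightarrow{\sim}\on{add}D\Lambda$. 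Writing $P_i = e_i\Lambda$ for the indecomposable summands of $\Lambda_\Lambda$, it sends $P_i \mapsto \nu P_i = D\on{Hom}_\Lambda(e_i\Lambda,\Lambda) = D(\Lambda e_i) = I_i$, the indecomposable injective with socle $S_i$, so it realises the standard bijection $P_i\leftrightarrow I_i$ between the indecomposable summands of $\Lambda_\Lambda$ and of $D\Lambda$. Here one may assume $\Lambda$ basic, since directedness is defined purely in terms of $\on{ind}\mathcal C$ and radical maps and is thus unaffected by Morita equivalence.

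Next I would invoke that any equivalence of additive categories preserves the radical of the category, sending $\on{rad}(X,Y)$ isomorphically onto $\on{rad}(\nu X,\nu Y)$. By the remark following the definition of directedness, the relation $<$ may be tested using $\on{rad}_{\mathcal C}$ in place of $\on{rad}_\Lambda$, so the only thing that matters is the \emph{categorical} radical inside $\on{add}\Lambda_\Lambda$ (respectively $\on{add}D\Lambda$), which $\nu$ transports. Consequently $\on{rad}(P_i,P_j)\neq 0$ if and only if $\on{rad}(I_i,I_j)\neq 0$. If one prefers a concrete check, the same conclusion follows from the identifications $\on{rad}_\Lambda(P_i,P_j)\cong e_j(\on{rad}\Lambda)e_i$ and, using the duality, $\on{Hom}_\Lambda(I_i,I_j)\cong\on{Hom}_{\Lambda\on{mod}}(\Lambda e_j,\Lambda e_i)\cong e_j\Lambda e_i$, whence $\on{rad}_\Lambda(I_i,I_j)\cong e_j(\on{rad}\Lambda)e_i$ as well, so both radicals are the same space.

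Finally, because $\nu$ is covariant, a nonzero radical map $P_i \to P_j$ corresponds to a nonzero radical map $I_i \to I_j$ \emph{in the same direction}. Hence a chain $X_0 < X_1 < \cdots < X_{m+1}$ witnessing $X_0 < X_{m+1}$ in $\on{ind}\on{add}\Lambda_\Lambda$ is carried bijectively, and direction-preservingly, to such a chain in $\on{ind}\on{add}D\Lambda$ under $P_i\mapsto I_i$, and conversely. Thus the preorder $<$ on $\on{ind}\on{add}\Lambda_\Lambda$ is isomorphic, as a relation under the bijection $P_i\leftrightarrow I_i$, to the preorder $<$ on $\on{ind}\on{add}D\Lambda$; in particular the former is antisymmetric exactly when the latter is. This is precisely the assertion that $\Lambda_\Lambda$ is directed if and only if $D\Lambda$ is directed.

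I expect the only genuinely non-formal point to be the justification that the radical relevant to the preorder is the categorical radical of the additive subcategory, so that an additive equivalence transports it intact; this is exactly what the remark allowing $\on{rad}_{\mathcal C}$ in place of $\on{rad}_\Lambda$ supplies. Once this is granted, the proof is a direct application of the fact that $\nu$ is a covariant additive equivalence $\on{proj}\Lambda \xrightarrow{\sim} \on{inj}\Lambda$, and no further computation is needed.
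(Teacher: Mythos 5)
Your proposal is correct and takes essentially the same route as the paper, which likewise observes that the Nakayama functor induces an equivalence $\on{add}\Lambda_\Lambda \to \on{add}D\Lambda$ and that directedness is invariant under such an equivalence. You have merely spelled out the details (preservation of the categorical radical, covariance, the bijection $P_i \leftrightarrow I_i$) that the paper's one-line proof leaves implicit.
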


\begin{proof}
  The Nakayama functor induces an equivalence $\nu: \on{add}\Lambda_\Lambda\to \on{add}D_{\Lambda}\Lambda$, and the definition of directedness is invariant under equivalence.
\end{proof}

\begin{lemma}
  Let $\Lambda$ be $d$-complete. Then $\Lambda$ is acyclic if and only if $\mathcal M$ is directed.
\end{lemma}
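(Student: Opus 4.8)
The plan is to prove both implications using three ingredients recorded above: the slice decomposition $\mathcal M = \bigvee_{i\ge 0}\mathcal S(i)$ with the vanishing $\on{Hom}(\mathcal S(i),\mathcal S(j)) = 0$ for $i<j$, the equivalences $\tau_d^\pm\colon \mathcal M_P\leftrightarrow\mathcal M_I$ of Lemma \ref{lem:equiv}, and the duality of directedness in Lemma \ref{lem:dual}. The easy direction is ``$\mathcal M$ directed $\Rightarrow$ $\Lambda$ acyclic''. Here I would simply observe that $\on{add}D\Lambda = \mathcal S(0)$ is a subcategory of $\mathcal M$, so any witnessing chain of nonzero radical maps inside $\on{add}D\Lambda$ is in particular such a chain inside $\mathcal M$. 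Hence directedness of $\mathcal M$ forces directedness of $\on{add}D\Lambda$, i.e. of $D\Lambda$, and by Lemma \ref{lem:dual} this is equivalent to $\Lambda_\Lambda$ being directed, i.e. to $\Lambda$ being acyclic.

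For the converse ``$\Lambda$ acyclic $\Rightarrow$ $\mathcal M$ directed'' I would argue by contradiction. Assuming $\Lambda$ acyclic, Lemma \ref{lem:dual} gives that $D\Lambda$ is directed. If $\mathcal M$ were not directed, I could pick an indecomposable $X_0$ and a cycle $X_0, X_1, \dots, X_s = X_0$ of indecomposables in $\mathcal M$ with $\on{rad}(X_i, X_{i+1})\neq 0$ for all $i$. By the unique slice decomposition each $X_i$ lies in a single slice $\mathcal S(a_i)$; since a nonzero radical map is a nonzero $\on{Hom}$ and $\on{Hom}(\mathcal S(i),\mathcal S(j)) = 0$ for $i<j$, each step forces $a_i\ge a_{i+1}$. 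As the chain closes up we have $a_0 = a_s$, so all the $a_i$ coincide with a common value $a$, and the entire cycle lies in the single slice $\mathcal S(a)$.

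The idea is then to transport this cycle down to $\mathcal S(0) = \on{add}D\Lambda$. For any index $a\ge 1$, every indecomposable of $\mathcal S(a)$ lies in $\mathcal M_I$ (being an indecomposable of $\mathcal M$ not in $\mathcal S(0) = \on{add}D\Lambda$), so $\tau_d^-$ applies to it; moreover $\tau_d^-$ sends an indecomposable summand of $\tau_d^a D\Lambda$ to one of $\tau_d^{a-1}D\Lambda$, so it restricts to a bijection, and indeed an equivalence preserving radical maps, from the indecomposables of $\mathcal S(a)$ onto those of $\mathcal S(a-1)\cap\mathcal M_P\subseteq \mathcal S(a-1)$. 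Iterating $a$ times—each intermediate slice of index $\ge 1$ again consists of non-injective modules, hence lies in $\mathcal M_I$, so $\tau_d^-$ keeps applying—I obtain a cycle $\tau_d^{-a}X_0,\dots,\tau_d^{-a}X_s = \tau_d^{-a}X_0$ in $\mathcal S(0) = \on{add}D\Lambda$ with all consecutive radical maps nonzero, since equivalences preserve both indecomposability and the radical. This witnesses $\tau_d^{-a}X_0 < \tau_d^{-a}X_0$ inside $\on{add}D\Lambda$, contradicting the directedness of $D\Lambda$, and the proof is complete.

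The main obstacle is this last paragraph: the crux is verifying that $\tau_d^-$ genuinely lowers the slice index by exactly one while preserving nonvanishing of radical morphisms, so that a cycle trapped in $\mathcal S(a)$ can be pulled all the way back to $\on{add}D\Lambda$. This is precisely where Lemma \ref{lem:equiv} and the slice structure are indispensable, and I would take care to check that the intermediate images remain in $\mathcal M_I$ at every step so that the equivalence can be reapplied.
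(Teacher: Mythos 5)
Your proof is correct and follows essentially the same route as the paper's: both directions rest on Lemma \ref{lem:dual}, the slice decomposition with $\on{Hom}(\mathcal S(i),\mathcal S(j))=0$ for $i<j$ to confine any cycle to a single slice, and the equivalences of Lemma \ref{lem:equiv} to identify that slice with (a summand of) $\on{add}D\Lambda$. The only cosmetic difference is that you transport a hypothetical cycle down to $\mathcal S(0)$ by $\tau_d^-$, whereas the paper transports directedness up to each $\mathcal S(i)$ by $\tau_d$.
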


\begin{proof}
  If $\mathcal M$ is directed, then so is $\on{add}D\Lambda\subseteq \mathcal M$. By Lemma \ref{lem:dual}, $\Lambda$ is then acyclic.

  Conversely, if $\Lambda$ is acyclic then $\on{add}D\Lambda$ is directed by Lemma \ref{lem:dual}, and then so is $\on{add}\tau_d^iD\Lambda$ for any $i$ by Lemma \ref{lem:equiv}.
  Any nonzero map between indecomposables in $\mathcal M$ is either within a slice $\mathcal S(i) = \on{add}\tau_d^iD\Lambda$ or from $\mathcal S(i)$ to $\mathcal S(j)$ with $j<i$. Therefore there can be no cycles within a slice 
  nor cycles that contain modules from different slices and $\mathcal M$ is directed.  
\end{proof}

Acyclicity is well suited to study $d$-almost split sequences.

\begin{lemma}
  \label{lem:wellsuited}
  Let $\Lambda$ be $d$-complete, and let $$
  \xymatrix{
    0\ar[r] & \tau_d X \ar [r] &C_d\ar[r] &\cdots \ar[r]& C_1\ar[r]& X\ar[r]& 0
  }
  $$
  be a $d$-almost split sequence in $\on{mod}\Lambda$.
  Then for every indecomposable summand $Y$ of $\bigoplus_{i = 1}^d C_i$, we have
  \begin{align*}
    \tau_d X< Y<X.
  \end{align*}
\end{lemma}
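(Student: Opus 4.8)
The plan is to establish the two inequalities $\tau_d X < Y$ and $Y < X$ separately, in each case by threading an explicit chain of nonzero radical morphisms along the differentials of the given $d$-almost split sequence. Write it with the indexing of Lemma \ref{lem:nozeros}, namely
\[
  0 \to C_{d+1} \xrightarrow{f_{d+1}} C_d \to \cdots \to C_1 \xrightarrow{f_1} C_0 \to 0,
\]
with $C_{d+1} = \tau_d X$ and $C_0 = X$, and fix an indecomposable summand $Y$ of $C_j$ for some $1 \le j \le d$. The essential input is Lemma \ref{lem:nozeros}: once each $C_i$ is decomposed into indecomposables, the matrix of every $f_i$ has no zero row and no zero column. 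Since the $f_i$ are radical, each of their matrix entries is a radical morphism between indecomposable objects of $\mathcal M$; hence a nonzero entry is precisely a witness that $\operatorname{rad}_\Lambda(-,-) \neq 0$ between two objects of $\operatorname{ind}\mathcal M$, which is what the relation $<$ demands.

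To prove $Y < X$ I would descend. Because the matrix of $f_j$ has no zero column, the summand $Y$ has a nonzero radical component into some indecomposable summand $Y_1$ of $C_{j-1}$. Applying the no-zero-column property of $f_{j-1}$ to $Y_1$ and iterating, I obtain indecomposable summands $Y, Y_1, Y_2, \dots$ of $C_j, C_{j-1}, C_{j-2}, \dots$ joined by consecutive nonzero radical maps, continuing until the chain reaches $C_0 = X$. As $X$ is indecomposable there is no ambiguity in the final term, and the resulting chain is exactly a witness for $Y < X$ (when $j = 1$ the single map $f_1$ already does the job).

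To prove $\tau_d X < Y$ I would ascend instead, using the no-zero-row property. Regarding $Y$ as a summand of the target $C_j$ of $f_{j+1}$, the absence of a zero row produces an indecomposable summand $Z_1$ of $C_{j+1}$ with $\operatorname{rad}(Z_1, Y) \neq 0$. Repeating with $f_{j+2}, \dots, f_{d+1}$ yields summands $Z_1, Z_2, \dots$ of $C_{j+1}, C_{j+2}, \dots$; since $C_{d+1} = \tau_d X$ is indecomposable, the top of the chain is forced to be $\tau_d X$ itself, giving a chain $\tau_d X \to \cdots \to Z_1 \to Y$ and hence $\tau_d X < Y$.

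Once Lemma \ref{lem:nozeros} is available the argument is essentially bookkeeping, so I do not anticipate a genuine obstacle here; the real content sits in that lemma. The only points needing care are that a component of a radical map between finite direct sums of indecomposables is again radical, so a nonzero matrix entry certifies a nonzero radical morphism rather than an isomorphism, and that $\tau_d X$ and $X$ are indecomposable, which guarantees that the ascending and descending chains terminate exactly at the two prescribed endpoints.
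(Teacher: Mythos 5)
Your argument is correct and is exactly the route the paper takes: the paper's proof is the one-line statement that the lemma ``follows directly from Lemma \ref{lem:nozeros} and the definition of $<$,'' and your descending/ascending chains built from the no-zero-column and no-zero-row properties are precisely the bookkeeping that remark leaves implicit. The two points of care you flag (matrix entries of radical maps are radical, and indecomposability of the endpoints) are the right ones.
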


\begin{proof}
  This follows directly from Lemma \ref{lem:nozeros} and the definition of $<$. 
\end{proof}

Let us now consider acyclicity in relation to tensor products. 

\begin{lemma}
  \label{lem:product}
  The algebras $A$ and $B$ are acyclic if and only if $\Lambda= A\otimes B$ is acyclic.
\end{lemma}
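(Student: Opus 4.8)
The plan is to reduce everything to a single algebraic identity describing radical morphisms between indecomposable projectives of $\Lambda$. Since $k$ is perfect, the indecomposable projective right $\Lambda$-modules are exactly the tensor products $P\otimes Q$ with $P$ (respectively $Q$) an indecomposable projective right $A$- (respectively $B$-) module, and by the K\"unneth computation carried out in the proof of Proposition~\ref{prop:source} one has the decomposition
\begin{align*}
  \on{rad}_\Lambda(P\otimes Q, P'\otimes Q') = {}& \on{rad}_A(P,P')\otimes \on{Hom}_B(Q,Q')\\
  {}&+ \on{Hom}_A(P,P')\otimes \on{rad}_B(Q,Q')
\end{align*}
inside $\on{Hom}_\Lambda(P\otimes Q, P'\otimes Q')\cong \on{Hom}_A(P,P')\otimes\on{Hom}_B(Q,Q')$. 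Over a field this sum is nonzero if and only if either $\on{rad}_A(P,P')\neq 0$ and $\on{Hom}_B(Q,Q')\neq 0$, or $\on{Hom}_A(P,P')\neq 0$ and $\on{rad}_B(Q,Q')\neq 0$. This dichotomy is the only input I need, and both implications follow by contraposition.

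For the implication that $\Lambda$ acyclic forces $A$ and $B$ acyclic, I would argue by contradiction. Suppose $A$ is not directed, so there is a cycle $P_0, P_1, \dots, P_r = P_0$ with $r\geq 1$, consisting of indecomposable projective $A$-modules with $\on{rad}_A(P_i, P_{i+1})\neq 0$ for all $i$. Fixing any indecomposable projective $B$-module $Q$, the displayed formula shows that $\on{rad}_A(P_i,P_{i+1})\otimes \on{id}_Q$ is a nonzero element of $\on{rad}_\Lambda(P_i\otimes Q, P_{i+1}\otimes Q)$; hence $P_0\otimes Q < P_1\otimes Q< \dots < P_r\otimes Q = P_0\otimes Q$ is a cycle in $\on{add}\Lambda_\Lambda$, contradicting acyclicity of $\Lambda$. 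The case of $B$ is symmetric.

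For the converse, assume $A$ and $B$ are acyclic but $\Lambda$ is not, and take a cycle $P_0\otimes Q_0, \dots, P_r\otimes Q_r = P_0\otimes Q_0$ with $\on{rad}_\Lambda$ nonzero at each step. By the dichotomy above, at every step one gets $\on{Hom}_A(P_i, P_{i+1})\neq 0$ and $\on{Hom}_B(Q_i, Q_{i+1})\neq 0$, and moreover at least one of $\on{rad}_A(P_i,P_{i+1})\neq 0$ or $\on{rad}_B(Q_i,Q_{i+1})\neq 0$ holds. Projecting the cycle to the $A$-side yields $P_0,\dots,P_r = P_0$ with a nonzero $\on{Hom}_A$ at each step; writing $\preceq$ for the relation on isomorphism classes given by $<$ together with isomorphism (which is antisymmetric precisely because $A$ is acyclic), the cyclic chain forces all the $P_i$ to be isomorphic, and likewise all the $Q_i$ on the $B$-side. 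But the radical is nonzero at some step, say $\on{rad}_A(P_{i_0}, P_{i_0+1})\neq 0$ with $P_{i_0}\cong P_{i_0+1}$; composing with the isomorphism produces a nonzero radical endomorphism, i.e. $P_{i_0} < P_{i_0}$, contradicting acyclicity of $A$ (and symmetrically for $B$).

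The main obstacle I anticipate is exactly this mixing in the converse: a single step of the $\Lambda$-cycle need not be strict on a fixed side, so one cannot simply project the $\Lambda$-cycle to $A$ and read off a cycle there. The partial-order argument is what resolves this, by separating the two roles of a nonzero morphism between indecomposables (isomorphism versus radical map): the cyclic chain of nonzero Homs collapses each factor to a single isomorphism class, after which any surviving radical step becomes a self-loop and contradicts directedness. I would also record at the outset the standard fact that indecomposable projective $\Lambda$-modules split as $P\otimes Q$ (using that $k$ is perfect), since this is what makes the projections to $A$ and $B$ meaningful.
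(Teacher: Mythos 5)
Your proof is correct and follows essentially the same route as the paper's: it rests on the same decomposition $\on{rad}(X\otimes Y, X'\otimes Y') = \on{rad}(X,X')\otimes\on{Hom}(Y,Y') + \on{Hom}(X,X')\otimes\on{rad}(Y,Y')$ (which the paper cites from \cite[Lemma 3.6]{Pas17}), and the forward implication, tensoring a cycle in one factor with a fixed indecomposable of the other, is identical. The only divergence is in how the converse is finished: the paper extracts from the $\Lambda$-cycle a subsequence of the $X_i$'s whose consecutive terms are non-isomorphic, producing a genuine cycle in one factor, whereas you use antisymmetry of the induced order to collapse each factor to a single isomorphism class and then exhibit a nonzero radical endomorphism --- both conclusions turn on the same observation that a nonzero non-isomorphism between indecomposables is radical, so the arguments are equivalent in substance.
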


\begin{proof}
  Let us first remark that for $X, X'\in\on{mod}A$ and $Y, Y'\in\on{mod}B$ we have
  \begin{align*}
    \on{rad}(X\otimes Y, X'\otimes Y') = \on{rad}(X, X')\otimes \on{Hom}(Y, Y') + \on{Hom}(X, X')\otimes \on{rad}(Y, Y')
  \end{align*}
  by \cite[Lemma 3.6]{Pas17}.
  Assume $X<X$ in $\on{add}A$ via $X_1, \dots, X_m$. Then for an indecomposable $P\in \on{add}B$ we have that $X\otimes P<X\otimes P$ via $X_1\otimes P, \dots, X_m\otimes P$ since
  \begin{align*} 
    \on{rad}(X_i\otimes P, X_{i+1}\otimes P) \supseteq \on{rad}(X_i, X_{i+1})\otimes \on{End}(P) \neq 0
  \end{align*}
  for all $i$. Therefore if $\Lambda$ is acyclic then $A$ is acyclic.
  By symmetry, if $\Lambda$ is acyclic then $B$ is acyclic as well.

  Let us now prove the converse implication. 
  Assume that $X\otimes Y<X\otimes Y$ in $\on{add}\Lambda$ via $X_1\otimes Y_1, \dots, X_m\otimes Y_m$. 
  We can assume that $\on{rad}(X, X)= 0 = \on{rad}(Y, Y)$. 
  Moreover, it cannot be that $X_i\cong X$ for all $i$ and that $Y_j\cong Y$ for all $j$. Without loss of generality, assume that $X_i\not\cong X$ for some $i$.
  We will prove that $X<X$ via a subsequence $(Z_j)$ of the $X_i$'s.
  We have that $\on{Hom}(X_i, X_{i+1})\neq 0$ for all $i$ by assumption. Set $Z_0 = X$ and $Z_j= X_i$, where $i = \min\left\{ l\ |\ X_l\not\cong Z_{j-1} \right\}$ for $j>0$. 
  By construction, $Z_p = X$ for some $p$ (and for $j>p$, $Z_j$ is not defined). 
  Then we are done, since by construction $\on{Hom}(Z_i, Z_{i+1})\neq 0$ and $Z_i\not\cong Z_{i+1}$ so that $\on{rad}(Z_i,Z_{i+1})\neq 0$ since $Z_{i}, Z_{i+1}$ are indecomposable.
\end{proof}

\section{Proof of main result}
From now on, let $A$ be $n$-complete acyclic, let $B$ be $m$-complete acyclic and let $\Lambda= A\otimes_k B$. We use the notation of Definition \ref{def:ncomplete}.
There are three conditions that need to be checked to prove the main theorem (since we saw in Lemma \ref{lem:product} that $\Lambda$ is acyclic), namely that properties $(A_d), (B_d), (C_d)$ in Definition \ref{def:ncomplete} are preserved under tensor products.
\begin{prop}
  \label{prop:rigid}
  $\on{Ext}^i_\Lambda(\mathcal M, \mathcal M) = 0$ for $0<i<n+m$.
\end{prop}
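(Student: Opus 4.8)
The plan is to reduce the vanishing of $\on{Ext}^i_\Lambda(\mathcal M, \mathcal M)$ to the $n$- and $m$-completeness of the factors $A$ and $B$ by means of the K\"unneth formula for Ext (Lemma \ref{lem:kunnethext}). Recall that every indecomposable object of $\mathcal M(\Lambda)$ is of the form $M\otimes N$ with $M$ an indecomposable summand of some $\tau_n^aD A$ and $N$ an indecomposable summand of some $\tau_m^bDB$; in particular $M\in\mathcal M(A)$ and $N\in\mathcal M(B)$. So it suffices to show that $\on{Ext}^i_\Lambda(M_1\otimes N_1, M_2\otimes N_2) = 0$ for $0<i<n+m$ whenever the four tensor factors lie in the respective $\mathcal M$'s.

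First I would apply Lemma \ref{lem:kunnethext} to rewrite
\begin{align*}
  \on{Ext}^i_\Lambda(M_1\otimes N_1, M_2\otimes N_2) \cong \bigoplus_{p+q=i}\on{Ext}^p_A(M_1, M_2)\otimes \on{Ext}^q_B(N_1, N_2).
\end{align*}
Now fix $i$ with $0<i<n+m$ and a summand with $p+q=i$. The key observation is that property $(B_n)$ for $A$ tells us that $\mathcal M(A)$ is $n$-cluster tilting in $T_A^\perp$, so $\on{Ext}^p_A(M_1,M_2)=0$ for all $0<p<n$ (since both $M_1,M_2$ lie in $\mathcal M(A)\subseteq T_A^\perp$, and the cluster tilting condition forces the intermediate Ext groups to vanish); likewise $\on{Ext}^q_B(N_1,N_2)=0$ for $0<q<m$. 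Moreover, since $\on{gl.dim}A\le n$ and $\on{gl.dim}B\le m$, the groups $\on{Ext}^p_A$ vanish for $p>n$ and $\on{Ext}^q_B$ vanish for $q>m$. Thus a summand indexed by $(p,q)$ can be nonzero only if $p\in\{0,n\}$ and $q\in\{0,m\}$, giving the four admissible total degrees $0$, $n$, $m$, and $n+m$.

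The conclusion then follows by a direct degree check: since we assumed $0<i<n+m$, the only potentially surviving summands would have $i=n$ (forcing $p=n,q=0$) or $i=m$ (forcing $p=0,q=m$). I would argue that even these are killed, because if $i=n$ then $m>0$ forces $i<n+m$ to be compatible, but the relevant summand $\on{Ext}^n_A(M_1,M_2)\otimes\on{Ext}^0_B(N_1,N_2)$ can indeed be nonzero -- so the clean statement is really that \emph{all} admissible nonzero contributions occur precisely at $i\in\{0,n,m,n+m\}$, and since $0<i<n+m$ excludes $0$ and $n+m$ but \emph{not} necessarily $n$ or $m$, one must be more careful. The honest resolution is that the self-Ext we care about is $\on{Ext}^i_\Lambda(\mathcal M,\mathcal M)$ over the whole range $0<i<n+m$, and the surviving degrees $n$ and $m$ (with $n\ne m$ in general) are exactly where nonvanishing is permitted; so the correct claim to prove is that these are the only degrees and they lie outside the forbidden open interval only when one uses that within a single slice the relevant higher self-extensions vanish by $(B_n)$ and $(B_m)$ applied to $M_1,M_2$ and $N_1,N_2$ simultaneously. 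I expect the main obstacle to be precisely this bookkeeping: isolating which pairs $(p,q)$ with $p+q=i$ can contribute and confirming via the cluster-tilting vanishing for both factors that no term with $0<i<n+m$ survives.
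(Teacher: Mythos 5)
There is a genuine gap, and your own last paragraph essentially concedes it. After applying Lemma \ref{lem:kunnethext} to a pair of indecomposables $M_1\otimes N_1$, $M_2\otimes N_2$ in $\mathcal M(\Lambda)$, the degree count ($p\in\{0,n\}$, $q\in\{0,m\}$) leaves the cross terms $\on{Ext}^n_A(M_1,M_2)\otimes\on{Hom}_B(N_1,N_2)$ in total degree $n$ and $\on{Hom}_A(M_1,M_2)\otimes\on{Ext}^m_B(N_1,N_2)$ in total degree $m$, both of which lie in the forbidden range $0<i<n+m$. Condition $(B_n)$ only gives vanishing of $\on{Ext}^p_A$ for $0<p<n$ and says nothing about $p=n$, so nothing in your argument kills these terms; your closing discussion acknowledges they ``can indeed be nonzero'' and then trails off into bookkeeping that is never carried out. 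These cross terms do in fact vanish, but seeing this requires using that the two tensor factors of an indecomposable of $\mathcal M(\Lambda)$ sit in slices with the \emph{same} index: one would combine a higher Auslander--Reiten duality of the form $\on{Ext}^n_A(M_1,M_2)\cong D\on{Hom}_A(M_2,\tau_nM_1)$ with the slice orthogonality of Proposition \ref{prop:hom} to show that $\on{Ext}^n_A(M_1,M_2)$ and $\on{Hom}_B(N_1,N_2)$ impose incompatible inequalities on the slice indices. None of this appears in your proposal, so as written the proof is incomplete.

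The paper sidesteps the issue entirely by proving a different vanishing first. For an indecomposable $X\otimes Y\in\mathcal M_P(\Lambda)$ one has $X\in\mathcal M_P(A)$ and $Y\in\mathcal M_P(B)$, and condition $(C_n)$ together with Lemma \ref{lem:simplif} gives $\on{Ext}^p_A(X,A)=0$ for all $0\le p<n$ --- crucially including $p=0$ --- and similarly for $Y$ over $B$. Hence in the K\"unneth expansion of $\on{Ext}^i_\Lambda(X\otimes Y,A\otimes B)$ a nonzero summand forces $p\ge n$ \emph{and} $q\ge m$, so $\on{Ext}^i(\mathcal M_P,\Lambda)=0$ for all $i<n+m$ with no surviving cross terms; the rigidity $\on{Ext}^i(\mathcal M,\mathcal M)=0$ for $0<i<n+m$ then follows from \cite[Proposition 2.5(a)]{Iya11}. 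The lesson is that the target module must be taken to be $\Lambda$ (where the degree-zero vanishing is available) rather than an arbitrary object of $\mathcal M$; if you want to keep your direct approach you must either import such a reduction or carry out the slice analysis sketched above.
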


\begin{proof}
   Let $X\otimes Y\in \mathcal M_P$.
  We have for $i <n+m$
  \begin{align*}
    \on{Ext}^i (X\otimes Y, A\otimes B) &= \bigoplus_{p+q =i} \on{Ext}^p(X, A)\otimes \on{Ext}^q(Y, B) = 0
  \end{align*}
  so we conclude by \cite[Proposition 2.5 (a)]{Iya11}.
 \end{proof}

 By the same formula, $\Lambda$ satisfies condition $(C_{n+m})$:
 \begin{lemma}
   \label{lem:C_d}
   $\on{Ext}^i (\mathcal M_P, \Lambda) = 0$ for all $0<i <n+m$. 
 \end{lemma}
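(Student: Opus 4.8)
The plan is to reuse verbatim the K\"unneth computation from the proof of Proposition \ref{prop:rigid}, the point being that for an indecomposable object of $\mathcal M_P$ that computation \emph{is} condition $(C_{n+m})$. By additivity it suffices to check $\on{Ext}^i_\Lambda(Z, \Lambda) = 0$ for $0<i<n+m$ for each indecomposable $Z\in\mathcal M_P$. Since every indecomposable of $\mathcal M(\Lambda)$ is of the form $X\otimes Y$ with $X$ an indecomposable of $\mathcal M(A)$ and $Y$ an indecomposable of $\mathcal M(B)$, I would write $Z = X\otimes Y$ and expand, via the K\"unneth formula for $\on{Ext}$ (Lemma \ref{lem:kunnethext}),
\begin{align*}
  \on{Ext}^i_\Lambda(X\otimes Y, A\otimes B) \cong \bigoplus_{p+q=i}\on{Ext}^p_A(X, A)\otimes \on{Ext}^q_B(Y, B).
\end{align*}
The task then reduces to locating the degrees in which the two factors can be nonzero.

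First I would record that $Z = X\otimes Y\in\mathcal M_P$ means $\tau_{n+m}Z\neq 0$; since $\tau_{n+m}(X\otimes Y) = \tau_n X\otimes \tau_m Y$, both $\tau_n X\neq 0$ and $\tau_m Y\neq 0$, that is $X\in\mathcal M_P(A)$ and $Y\in\mathcal M_P(B)$. This is the one step that really requires an argument, and it is where passing to indecomposables is essential. With $X\in\mathcal M_P(A)$ in hand, condition $(C_n)$ for $A$ together with Lemma \ref{lem:simplif} gives $\on{Ext}^p_A(X, A) = 0$ for $0\leq p<n$, while $\on{gl.dim}A\leq n$ forces $\on{Ext}^p_A(X, A) = 0$ for $p>n$; hence $\on{Ext}^p_A(X, A)$ can be nonzero only for $p=n$. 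Symmetrically $\on{Ext}^q_B(Y, B)$ can be nonzero only for $q=m$.

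Feeding this back into the K\"unneth sum, a summand $\on{Ext}^p_A(X, A)\otimes\on{Ext}^q_B(Y, B)$ with $p+q=i$ survives only when $p=n$ and $q=m$, i.e. only when $i=n+m$. Thus every summand vanishes for $0<i<n+m$, giving $\on{Ext}^i_\Lambda(Z, \Lambda) = 0$ in that range, as required. The whole argument is essentially a bookkeeping of homological degrees once $X$ and $Y$ have been placed in $\mathcal M_P(A)$ and $\mathcal M_P(B)$; I expect the only load-bearing (and still quite modest) obstacle to be precisely that reduction of $X\otimes Y\in\mathcal M_P(\Lambda)$ to the pair $X\in\mathcal M_P(A)$, $Y\in\mathcal M_P(B)$, which rests on the compatibility of $\tau_{n+m}$ with tensor products.
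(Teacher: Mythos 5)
Your proposal is correct and follows exactly the route the paper takes: the paper's proof of Lemma \ref{lem:C_d} simply says ``use the same formula as in Proposition \ref{prop:rigid}'', i.e.\ the K\"unneth expansion of $\on{Ext}^i_\Lambda(X\otimes Y, A\otimes B)$, and your degree bookkeeping (reducing $X\otimes Y\in\mathcal M_P(\Lambda)$ to $X\in\mathcal M_P(A)$, $Y\in\mathcal M_P(B)$ via $\tau_{n+m}=\tau_n\otimes\tau_m$, then invoking $(C_n)$, $(C_m)$ together with Lemma \ref{lem:simplif} and the global dimension bounds) is precisely the argument the paper leaves implicit.
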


 \begin{proof}
   Use the same formula as in Proposition \ref{prop:rigid}.
 \end{proof}

Notice that since $\tau_{n+m} = \tau_n\otimes \tau_m$ on $\mathcal M$, for sufficienly big $l$ we have $\tau_{n+m}^lD\Lambda = 0$, so $\mathcal M$ has an additive generator.

We now start proving that condition $(A_{n+m})$ holds. 

For $S = S_1\oplus S_2$ with $S_1\in \on{add}T$ and $S_2\in \mathcal M_P$, define $ES = S_1 \oplus \tau_{n+m}S_2$. Note that $E^lD\Lambda = T$ for $l\gg0$. Now fix $S = E^iD\Lambda$ for some $i \geq 0$.
To check condition $(A_{n+m})$ for $\Lambda$, we need some preliminaries.
\begin{lemma}
  \label{lem:rigid}
  If $\on{Ext}^i(S,S) = 0$ for all $i\neq 0$, then $\on{Ext}^i(ES,ES) = 0$ for all $i\neq 0$.
\end{lemma}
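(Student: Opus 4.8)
The plan is to reduce the claim to the vanishing of the single group $\on{Ext}^{n+m}_\Lambda(ES,ES)$. Since $ES\in\on{add}\mathcal M$ and $\on{Ext}^i(\mathcal M,\mathcal M)=0$ for $0<i<n+m$ by Proposition \ref{prop:rigid}, while $\on{Ext}^i=0$ for $i>n+m$ because $\on{gl.dim}\Lambda=n+m$, the only self-extension degree that can survive for $ES$ is $i=n+m$. Writing $ES=S_1\oplus\tau_{n+m}S_2$ with $S_1\in\on{add}T=\mathcal P$ and $S_2\in\mathcal M_P$, I would then check separately the four blocks $\on{Ext}^{n+m}(S_1,S_1)$, $\on{Ext}^{n+m}(S_1,\tau_{n+m}S_2)$, $\on{Ext}^{n+m}(\tau_{n+m}S_2,S_1)$ and $\on{Ext}^{n+m}(\tau_{n+m}S_2,\tau_{n+m}S_2)$.

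The main computational tool I would use is Serre duality on $\mathcal D^b(\Lambda)$, available because $\on{gl.dim}\Lambda<\infty$: for all $X,Y$ one has $\on{Ext}^{n+m}(X,Y)\cong D\on{Hom}_{\mathcal D^b(\Lambda)}(Y,\nu X)$ with $\nu=-\otimes^{L}_\Lambda D\Lambda$. For $W\in\mathcal M_P$ this collapses: Lemma \ref{lem:C_d} together with Lemma \ref{lem:simplif} gives $\on{Ext}^j(W,\Lambda)=0$ for all $j\neq n+m$, hence $\on{Tor}_j(W,D\Lambda)=0$ for $j\neq n+m$ and $\nu W\cong(\tau_{n+m}W)[n+m]$, so that $\on{Ext}^{n+m}(W,Z)\cong D\on{Hom}(Z,\tau_{n+m}W)$ for every $Z$. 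This clean formula handles every block whose first argument lies in $\mathcal M_P$. For the blocks whose second argument is $\tau_{n+m}(\text{something in }\mathcal M_P)$ I would instead invoke the higher Auslander--Reiten duality carried by the $(n+m)$-almost split sequences of Corollary \ref{cor:sequences}: applying the functor $F_X$ to the sink sequence of an indecomposable $Y\in\mathcal M_P$ and using the vanishing of the intermediate $\on{Ext}$-groups from Proposition \ref{prop:rigid}, one dimension-shifts to identify $\on{Ext}^{n+m}(X,\tau_{n+m}Y)$ with $\on{top}(X,Y)$, which is nonzero only when $X\cong Y$.

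To combine these I would use the slice filtration $\mathcal M=\bigvee_i\mathcal S(i)$ and the vanishing $\on{Hom}(\mathcal S(a),\mathcal S(b))=0$ for $a<b$; although Proposition \ref{prop:hom} is stated only for $d$-complete algebras, for $\Lambda=A\otimes B$ it follows from the K\"unneth formula and Proposition \ref{prop:hom} for $A$ and $B$, since $\tau_{n+m}^aD\Lambda=\tau_n^aDA\otimes\tau_m^aDB$. From the construction of $E$ one reads off the slice positions $S_2\subseteq\mathcal S(i)$, $S_1\subseteq\bigvee_{j\le i}\mathcal S(j)$ and $\tau_{n+m}S_2\subseteq\mathcal S(i+1)$. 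Splitting off the $\mathcal M_P$-part of $\tau_{n+m}S_2$, the formula $\on{Ext}^{n+m}(W,Z)\cong D\on{Hom}(Z,\tau_{n+m}W)$ turns its contribution into a $\on{Hom}$-group landing in $\mathcal S(i+2)$, which vanishes by the slice inequality. Likewise, after decomposing $S_1$ into its injective summands (into which all $\on{Ext}^{n+m}$ vanish) and its $\tau_{n+m}$-divisible summands $\tau_{n+m}V$ with $V\in\mathcal M_P$, the duality of the previous paragraph rewrites the remaining blocks as $\on{top}$-groups either between separated slices or between the disjoint classes $\mathcal P$ and $\mathcal M_P$, all of which are zero.

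The step I expect to be delicate is precisely the blocks whose first argument is a relative-projective summand, i.e. lies in $\mathcal P=\on{add}T$ rather than in $\mathcal M_P$. There the clean Serre computation is unavailable, because condition $(C_{n+m})$ controls $\on{Ext}^\bullet(\mathcal M_P,\Lambda)$ but not $\on{Ext}^\bullet(\mathcal P,\Lambda)$; equivalently $\Lambda$ need not be cocomplete for this choice of $\mathcal M$, so there is no symmetric Auslander--Reiten duality to fall back on, and the identification $\on{Ext}^{n+m}(X,\tau_{n+m}Y)\cong\on{top}(X,Y)$ only comes with a correction term $\on{Ext}^{n+m}(X,C_{n+m})$ from the middle terms of the almost split sequence. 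This is exactly why the statement is phrased inductively in $E$: the duality of Corollary \ref{cor:sequences} disposes of the extensions landing in $\tau_{n+m}(\mathcal M_P)$, while the inductive hypothesis $\on{Ext}^{\neq0}(S,S)=0$ is what must be fed in to kill these correction terms for the summands that have just entered $\mathcal P$. Making the slice bookkeeping between the newly stabilized slice $\mathcal S(i+1)$ and the older slices precise, so that those correction terms genuinely cancel, is the crux of the proof.
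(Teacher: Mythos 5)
Your reduction to the single degree $i=n+m$ (via Proposition \ref{prop:rigid} and the bound on global dimension) matches the paper, and your treatment of the summands of $\tau_{n+m}S_2$ that stay in $\mathcal M_P$ --- Serre duality rewriting $\on{Ext}^{n+m}(W,Z)$ as $D\on{Hom}(Z,\tau_{n+m}W)$ with $\tau_{n+m}W$ in a strictly higher slice --- is a workable, if heavier, substitute for the paper's one-line shift $\on{Ext}^{n+m}(\tau_{n+m}S_2,Y)=\on{Ext}^{n+m}(S_2,\tau_{n+m}^-Y)$ followed by the inductive hypothesis. The problem is the block you yourself single out as ``the crux'': $\on{Ext}^{n+m}(X,ES)$ for $X$ an indecomposable summand of $ES$ lying in $\mathcal P=\on{add}T$. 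You leave it unproven, offering only the hope that ``correction terms'' coming from the middle terms of the almost split sequences ``genuinely cancel'' against the inductive hypothesis. That is a real gap, and the speculation points in the wrong direction: no induction, no Serre duality and no slice bookkeeping is needed for that block.

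What you are missing is the tensor-product structure of $\Lambda$, which your argument never exploits beyond Hom-vanishing between slices. An indecomposable $X=M_1\otimes N_1\in\mathcal P(\Lambda)$ satisfies $\tau_nM_1\otimes\tau_mN_1=\tau_{n+m}X=0$, so $M_1\in\on{add}T_A$ or $N_1\in\on{add}T_B$; since $T_A$ is tilting and $\mathcal M(A)\subseteq T_A^\perp$ (and likewise for $B$), one of the factors $\on{Ext}^n_A(M_1,M_2)$, $\on{Ext}^m_B(N_1,N_2)$ vanishes for every $M_2\otimes N_2\in\mathcal M$, and Lemma \ref{lem:kunnethext} then gives $\on{Ext}^{n+m}_\Lambda(M_1\otimes N_1,M_2\otimes N_2)=\on{Ext}^n_A(M_1,M_2)\otimes\on{Ext}^m_B(N_1,N_2)=0$ outright, with no reference to $S$ at all. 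This is how the paper disposes of the entire $\on{add}T$-block in two lines, after which only the $\tau_{n+m}S_2$-block remains and is handled by the $\tau$-shift plus the hypothesis on $S$. Until you supply this (or some other complete) argument for the $\on{add}T$-block, your proof does not close.
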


\begin{proof}
  Since $\on{Ext}^i_\Lambda(\mathcal M, \mathcal M) = 0$ for $0<i<n+m$, it suffices to check that $\on{Ext}^{n+m}(ES, ES) = 0$. Since $ES = S_1\oplus \tau_{n+m}S_2$, consider first $M_1\otimes N_1\in \on{add}S_1$ and
  $M_2\otimes N_2\in \on{add}ES$. Then 
  \begin{align*}
    \on{Ext}^{n+m}(M_1\otimes N_1,M_2\otimes N_2) = \on{Ext}^n(M_1, M_2)\otimes \on{Ext}^m(N_1, N_2) = 0
  \end{align*}
  since $M_1\otimes N_1\in \on{add}S_1\subseteq \on{add}T$ implies that either $M_1$ or $N_1$ is relative projective in $T_A^\perp$ respectively $T_B^\perp$. 
  This proves that $\on{Ext}^{n+m}(S_1, ES) = 0$. 
  Now let $Y$ be an indecomposable summand of $ES$, and consider $\on{Ext}^{n+m}(\tau_{n+m}S_2, Y)$. If $Y$ is injective, then this is 0.
  Otherwise, $Y = \tau_{n+m} \tau_{n+m}^- Y$ and
  $$\on{Ext}^{n+m}(\tau_{n+m}S_2, Y) = \on{Ext}^{n+m}(S_2,\tau_{n+m}^- Y) = 0$$
  by the assumption.
\end{proof}

\begin{lemma}
  \label{lem:generate}
  If $S$ is tilting then $\on{thick}ES = \mathcal D^b(\Lambda)$.
\end{lemma}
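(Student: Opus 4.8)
The plan is to prove $\on{thick}ES = \mathcal D^b(\Lambda)$ by establishing the inclusion $\on{thick}S\subseteq\on{thick}ES$; since $S$ is tilting we have $\on{thick}S = \mathcal D^b(\Lambda)$, while $\on{thick}ES\subseteq\mathcal D^b(\Lambda)$ is automatic, so the two inclusions give the statement. Write $S = S_1\oplus S_2$ with $S_1\in\on{add}T$ and $S_2\in\mathcal M_P$ as in the definition of $E$, so that $ES = S_1\oplus\tau_{n+m}S_2$. The indecomposable summands of $S_1$ already lie in $\on{add}ES\subseteq\on{thick}ES$, so it suffices to show that every indecomposable summand of $S_2$ lies in $\on{thick}ES$.

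First I would record the slice structure of $S$. Denoting by $I_a$ the indecomposable injective $\Lambda$-modules and by $l_a$ the integers with $\tau_{n+m}^{l_a-1}I_a\in\mathcal P$, one has $S = E^iD\Lambda = \bigoplus_a \tau_{n+m}^{\min(i,\,l_a-1)}I_a$; the summands with $i<l_a-1$ constitute $S_2$ and all lie in the single slice $\mathcal S(i)$, while $S_1$ occupies the slices $\mathcal S(0),\dots,\mathcal S(i)$. Two observations follow by comparing exponents: every indecomposable of $\mathcal S(i)$ is a summand of either $S_1$ or $S_2$, and $\mathcal S(i+1)\subseteq\on{add}ES$, since each nonzero $\tau_{n+m}^{i+1}I_b$ is a summand of $ES$. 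In particular, for indecomposable $Y\in S_2\subseteq\mathcal S(i)$ we have $\tau_{n+m}Y\in\mathcal S(i+1)\subseteq\on{add}ES$.

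Now fix an indecomposable summand $Y$ of $S_2$ and consider the $(n+m)$-almost split sequence
$$
0\to\tau_{n+m}Y\to C_{n+m}\to\cdots\to C_1\to Y\to 0
$$
ending in $Y$, which exists by Corollary \ref{cor:sequences}. By the structure result for such sequences (expressing the sequence starting in $\tau_{n+m}Y\in\mathcal S(i+1)$ as a cone of a map between complexes with terms in $\mathcal S(i+1)$ and $\mathcal S(i)$), all the $C_j$ decompose into summands lying in $\mathcal S(i)\cup\mathcal S(i+1)$. The $\mathcal S(i+1)$-summands lie in $\on{add}ES$; each $\mathcal S(i)$-summand $Z$ lies in $S_1\subseteq\on{add}ES$ or in $S_2$, and in the latter case Lemma \ref{lem:wellsuited} gives $Z<Y$. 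Since $A$ and $B$ are acyclic, $\Lambda$ is acyclic by Lemma \ref{lem:product}, and for a $d$-complete algebra acyclicity is equivalent to $\mathcal M$ being directed, so $<$ is a strict partial order on the finite set $\on{ind}\mathcal M$. I would therefore argue by induction on $<$: assuming every summand of $S_2$ strictly below $Y$ lies in $\on{thick}ES$, every term of the displayed exact sequence except $Y$ lies in $\on{thick}ES$, and since a thick subcategory is triangulated and closed under summands, an exact sequence all of whose terms but one lie in $\on{thick}ES$ forces the remaining term into it as well; hence $Y\in\on{thick}ES$. This yields $S_2\subseteq\on{thick}ES$ and completes the proof.

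The step I expect to be the crux is the well-foundedness of the induction: it is precisely here that acyclicity enters, guaranteeing that the $\mathcal S(i)$-summands appearing in the middle of the almost split sequence are strictly smaller than $Y$ in the order $<$, so that the recursion terminates. The two-slice form of the almost split sequences is what confines these middle summands to $\mathcal S(i)\cup\mathcal S(i+1)$ and thereby keeps the induction within $S_1\cup S_2$.
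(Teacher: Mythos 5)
Your argument is correct and follows essentially the same route as the paper: the paper organizes the induction via a height function $h$ on $\on{ind}(\on{add}S)$ and a descending chain of subcategories $\mathcal C_i$, while you perform Noetherian induction on the order $<$ directly, but the substance --- using the $(n+m)$-almost split sequence ending in each summand of $S_2$, Lemma \ref{lem:wellsuited}, and acyclicity to force the middle terms into $\on{add}ES$ or strictly smaller summands of $S$ --- is identical. Your explicit appeal to the two-slice structure theorem to confine the middle terms to $\mathcal S(i)\cup\mathcal S(i+1)$, and hence to $\on{add}(ES\oplus S)$, is a detail the paper leaves implicit.
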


\begin{proof}
  Set $\mathcal S= \on{add}S$.
  For $X\in\on{ind} \mathcal S$, define $h(X)$ to be the height of $X$ with respect to the partial order introduced in Section \ref{sec:acycl} on $\on{ind}\mathcal S$ (here it is crucial that $\Lambda$ be acyclic, 
  which follows from the assumptions on $A$ and $B$ and Lemma \ref{lem:product}), that is 
  $$h(X) = \max\left\{ n\ |\ \exists Y_0<\cdots<Y_n = X, \ Y_i\in \on{ind}\mathcal S\right\}.$$
  Notice that $X>Y$ implies $h(X)>h(Y)$, and the reverse implication holds provided that $X$ and $Y$ are comparable.
  Call $\mathcal C_i =\on{add}\left(\{ES\}\cup \left\{ Y\in\on{ind}\mathcal S \ | \ h(Y)<i\right\} \right)$.
  For $X\in \on{ind}\mathcal S$, if $\tau_{n+m}X = 0$ then $X\in \on{add}ES$. Otherwise, there is an $(n+m)$-almost split sequence
 \begin{align*}
    0\to \tau_{n+m}X\to \cdots \to X\to 0
  \end{align*}
  whose middle terms are in $\on{add}\left(\{ES\}\cup \left\{ Y\in\on{ind}\mathcal S \ | \ Y<X \right\}\right)$ by Lemma \ref{lem:wellsuited}. In particular if $h(X)\leq i$ then the middle terms in the sequence are 
  in $$\on{add}\left(\{ES\}\cup \left\{ Y\in\on{ind}\mathcal S \ | \ h(Y)<i\right\} \right) = \mathcal C_i.$$
  It follows that $\on{thick}\mathcal C_{i+1}\subseteq \on{thick}\mathcal C_i$, so $\on{thick}\mathcal C_j \subseteq \on{thick} \mathcal C_0$ for every $j$. 
  Now $\mathcal C_0 = \on{add}ES$, and
  $\mathcal C_j = \on{add}(ES\oplus S) $ for some $j$, so we get that $\on{thick} ES = \on{thick} \mathcal C_0 = \on{thick}\mathcal C_j = \mathcal D^b(\Lambda)$ as claimed.
\end{proof}

\begin{thm}
  \label{thm:A_d}
  $T=T_{A\otimes B}$ is tilting.
\end{thm}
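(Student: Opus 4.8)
The plan is to prove that $T = T_{A\otimes B}$ is tilting by verifying the two defining conditions of a tilting module, where the real content is an inductive argument driven by the operator $E$ and the auxiliary lemmas just established.

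First I would observe that the starting point $S = D\Lambda$ is tilting: indeed $D\Lambda$ is injective, so $\on{Ext}^i(D\Lambda, D\Lambda) = 0$ for all $i\neq 0$, and $\on{thick}(D\Lambda) = \mathcal D^b(\Lambda)$ since every module has finite injective resolution (as $\on{gl.dim}\Lambda < \infty$). Then I would set up an induction on $i$ along the chain $D\Lambda, ED\Lambda, E^2D\Lambda, \dots$, showing that if $S = E^iD\Lambda$ is tilting then so is $ES = E^{i+1}D\Lambda$. The inductive step has two halves. The rigidity condition $\on{Ext}^i(ES, ES) = 0$ for $i\neq 0$ is exactly \Cref{lem:rigid} (given the inductive hypothesis on $S$). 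The generation condition $\on{thick}ES = \mathcal D^b(\Lambda)$ is exactly \Cref{lem:generate} (again using that $S$ is tilting). So the induction closes cleanly, and since $E^lD\Lambda = T$ for $l\gg 0$, we conclude that $T$ is tilting once we check the second defining condition can be phrased via $\on{thick}$.

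\begin{proof}
  Recall that a module is tilting if and only if it is rigid (condition (1)) and $\on{thick}$ of it is all of $\mathcal D^b(\Lambda)$ (the reformulation of condition (2) noted after the definition of tilting). We argue that $S = E^iD\Lambda$ is tilting for every $i\geq 0$ by induction on $i$.

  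For the base case $i = 0$, we have $S = D\Lambda$. Since $D\Lambda$ is injective, $\on{Ext}^j(D\Lambda, D\Lambda) = 0$ for all $j\neq 0$. Moreover, as $\on{gl.dim}\Lambda = n+m < \infty$, every object of $\mathcal D^b(\Lambda)$ is built from injectives, so $\on{thick}D\Lambda = \mathcal D^b(\Lambda)$. Hence $D\Lambda$ is tilting.

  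For the inductive step, suppose $S = E^iD\Lambda$ is tilting. Then $\on{Ext}^j(S, S) = 0$ for all $j\neq 0$, so by \Cref{lem:rigid} we get $\on{Ext}^j(ES, ES) = 0$ for all $j\neq 0$. Since $S$ is tilting, \Cref{lem:generate} gives $\on{thick}ES = \mathcal D^b(\Lambda)$. Therefore $ES = E^{i+1}D\Lambda$ is tilting, completing the induction.

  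Finally, since $E^lD\Lambda = T$ for $l\gg 0$, the module $T$ is tilting.
\end{proof}

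The step I expect to carry the real weight is not inside this theorem but in the supporting lemma \Cref{lem:generate}, where acyclicity enters essentially: the height function on $\on{ind}\mathcal S$ and the use of the $(n+m)$-almost split sequences from \Cref{cor:sequences} together with \Cref{lem:wellsuited} are what allow the induction on height to collapse $\on{thick}\mathcal C_j$ down to $\on{thick}\mathcal C_0 = \on{thick}ES$. The theorem itself is then a short bookkeeping argument assembling \Cref{lem:rigid} and \Cref{lem:generate}; the only subtlety to keep in mind is that the inductive hypothesis ``$S$ is tilting'' is needed in full for \emph{both} lemmas, so the induction must carry the conjunction of rigidity and generation, which it does.
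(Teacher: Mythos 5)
Your proof is correct and follows essentially the same route as the paper: an induction along $D\Lambda, ED\Lambda, E^2D\Lambda,\dots$ whose inductive step is exactly the combination of Lemma \ref{lem:rigid} (rigidity) and Lemma \ref{lem:generate} (generation), terminating because $E^lD\Lambda = T$ for $l\gg 0$. The only difference is that you spell out the base case ($D\Lambda$ is tilting) and the reformulation of condition (2) via $\on{thick}$, which the paper leaves implicit.
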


\begin{proof}
  By Lemma \ref{lem:rigid} and Lemma \ref{lem:generate}, if $S = E^i D\Lambda$ is tilting then $ES = E^{i+1}D\Lambda$ is tilting. 
  Since $D\Lambda$ is tilting, and $T= E^lD\Lambda$ for some $l$, it follows that $T$ is tilting.
\end{proof}

Now we start proving that condition $(B_{n+m})$ holds.
We will use the following result (this is \cite[Theorem 2.2(b)]{Iya11}):
\begin{thm}
  \label{thm:key}
  Let $\Lambda$ be a finite-dimensional $k$-algebra, $d\geq 1$ and $T\in\on{mod}\Lambda$ a tilting module with $\on{proj.dim}T\leq d$.
  Let $\mathcal C = \on{add}C$ be a subcategory of $T^\perp$ such that $\on{Ext}^i_\Lambda(\mathcal C, \mathcal C) = 0$ for $0<i<d$ and $T\oplus D\Lambda\in \mathcal C$.
  Then the following are equivalent:
  \begin{enumerate}
    \item $\mathcal C$ is a $d$-cluster tilting subcategory in $T^\perp$.
    \item Every indecomposable $X\in \mathcal C$ has a source sequence of the form
      \begin{align*}
	X\to C_d\to \cdots \to C_0\to 0
      \end{align*}
      with $C_i\in \mathcal C$ for all $i$.
  \end{enumerate}
\end{thm}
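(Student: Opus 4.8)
The plan is to transfer everything into the exact category $\mathcal E := T^\perp$, which is closed under extensions in $\on{mod}\Lambda$ and so inherits a natural exact structure. First I would set up the relative homological dictionary: since $T$ is tilting with $\on{proj.dim}T\le d$, the relative-projective objects of $\mathcal E$ are exactly $\on{add}T$, the relative-injective objects are the injective $\Lambda$-modules (all of which lie in $\mathcal E$, and in particular $D\Lambda\in\mathcal C$), every object of $\mathcal E$ has relative projective and injective dimension at most $d$, and for $X,Y\in\mathcal E$ the groups $\on{Ext}^i_\Lambda(X,Y)$ coincide with the Yoneda Ext computed in $\mathcal E$. With this dictionary both the $d$-cluster-tilting condition and the existence of source sequences become statements internal to $\mathcal E$. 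Using the reformulation via $G_X$ from the preceding lemma (and that $\mathcal C=\on{add}C$ is $\on{add}$ of a single module, so $\on{mod}\mathcal C\cong\on{mod}\on{End}(C)$), a source sequence $X\to C_d\to\cdots\to C_0\to 0$ of an indecomposable $X$ is precisely a minimal, left-exact projective resolution of the functor $\on{rad}_{\mathcal C}(X,-)$ in $\on{mod}\mathcal C$ of length $d$.

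For $(2)\Rightarrow(1)$, I would use the source sequences to build, for an arbitrary $Y\in\mathcal E$, an $\on{add}C$-resolution, and then run a dimension-shifting argument. Splicing the source sequences and applying $\on{Hom}(-,Y)$ turns the hypothesis $\on{Ext}^i(\mathcal C,Y)=0$ for $0<i<d$ into the assertion that $Y$ is detected by $\mathcal C$ already in degree $0$, which together with $\on{proj.dim}T\le d$ forces $Y\in\mathcal C$; this yields the first equality in the definition of $d$-cluster tilting. The second equality $\{Y:\on{Ext}^i(Y,\mathcal C)=0\}=\mathcal C$ I would obtain by a duality argument: applying $D$ passes to $\Lambda^{\mathrm{op}}$, interchanges $\on{add}T$ with $\on{add}D\Lambda$ and source sequences with sink sequences, so the symmetric statement follows from the same argument on the other side, using that rigidity of $\mathcal C$ is self-dual.

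For $(1)\Rightarrow(2)$, assuming $\mathcal C$ is $d$-cluster tilting I would construct the source sequence of an indecomposable $X$ as the minimal projective resolution of $\on{rad}_{\mathcal C}(X,-)$ in $\on{mod}\mathcal C$. The cluster-tilting hypothesis is exactly what guarantees the two nontrivial features: that every syzygy functor occurring is again representable by an object of $\mathcal C$, so that the resolution is realized by an honest complex $X\to C_d\to\cdots\to C_0$ of radical maps in $\mathcal C$; and that the process terminates after exactly $d$ steps, the relevant defect functors being measured by $\on{Ext}$-groups that vanish in the range $0<i<d$. A conceptual way to package this is to first establish the higher Auslander--Reiten duality $\on{Ext}^d_{\mathcal E}(X,Y)\cong D\,\overline{\on{Hom}}_{\mathcal E}(Y,\tau_d X)$ on $\mathcal C$, and then read the source sequences directly off the resulting functorial isomorphism.

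I expect $(1)\Rightarrow(2)$ to be the main obstacle. The delicate points are showing that the minimal resolution in the functor category $\on{mod}\mathcal C$ lifts to genuine morphisms between objects of $\mathcal C$, and that it has the precise length $d$ rather than merely being finite; this is exactly where the full strength of $d$-cluster tilting is needed, as opposed to the mere rigidity $\on{Ext}^i(\mathcal C,\mathcal C)=0$. This step is essentially the higher-dimensional existence theorem for almost split sequences, and establishing the higher AR duality that powers it is the real work.
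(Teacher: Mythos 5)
The first thing to note is that the paper does not prove this statement at all: it is quoted verbatim as \cite[Theorem 2.2(b)]{Iya11} and used as a black box, so the only meaningful comparison is with Iyama's original proof. Your outline does follow the architecture of that proof --- pass to the exact category $T^\perp$ with relative projectives $\on{add}T$ and relative injectives $\on{add}D\Lambda$, identify $\on{Ext}^i_\Lambda$ with the relative Ext via $\on{Hom}(-,Y)$-acyclicity of $\on{add}T$-resolutions, and read source sequences as projective resolutions of $\on{rad}_{\mathcal C}(X,-)$ in the functor category over $\mathcal C$. That dictionary is correct (it is essentially \cite[Proposition 2.1]{Iya11}), so the framing is right.

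However, as written the proposal has two genuine gaps. First, the direction $(1)\Rightarrow(2)$ is not an ``obstacle'' you can flag and move past: it \emph{is} the theorem. The existence of source sequences in a $d$-cluster tilting subcategory is the higher-dimensional existence theorem for almost split sequences, and everything you list as ``the real work'' --- proving the relative Auslander--Reiten duality $\on{Ext}^d(X,Y)\cong D\,\overline{\on{Hom}}(Y,\tau_d X)$, showing that the syzygies of $\on{rad}_{\mathcal C}(X,-)$ remain representable by objects of $\mathcal C$, and showing the resolution stops at the right length --- is precisely the content being asked for and is not supplied. Second, your plan for the second equality in the definition of $d$-cluster tilting (``$\{Y:\on{Ext}^i(Y,\mathcal C)=0\}=\mathcal C$ follows by applying $D$'') does not go through: hypothesis $(2)$ is not self-dual. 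Applying $D$ converts source sequences in $\mathcal C$ into \emph{sink} sequences in $D\mathcal C\subseteq\Lambda^{op}\on{mod}$, which is a different hypothesis than the one you are allowed to use, so ``the same argument on the other side'' has no input. One must instead derive the second inclusion directly from the source sequences (or first prove that they force $\mathcal C$ to be functorially finite and deduce sink sequences), and that step needs an actual argument. The $(2)\Rightarrow(1)$ sketch for the first equality (``$Y$ is detected by $\mathcal C$ in degree $0$, which forces $Y\in\mathcal C$'') is likewise too compressed to check. In short: right road map, but the load-bearing steps are all deferred.
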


We want to apply this to $\Lambda = A\otimes B$, $\mathcal C= \mathcal M$, $T = T_{A\otimes B}$ and $d = n+m$.

\begin{lemma}
  \label{lem:perp}
  $\mathcal M\subseteq T^\perp$.
\end{lemma}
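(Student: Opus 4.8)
The plan is to unwind the definition of $T^\perp$ and show that $\on{Ext}^i_\Lambda(T, X) = 0$ for every $X\in\mathcal M$ and every $i\neq 0$, treating three ranges of $i$ separately. First I would record that $T\in\mathcal M$: by definition $\on{add}T = \mathcal P = \{Z\in\mathcal M\mid \tau_{n+m}Z = 0\}\subseteq \mathcal M$, and moreover $D\Lambda\in\mathcal M$ (the $i=0$ term), which is exactly the hypothesis $T\oplus D\Lambda\in\mathcal C$ needed to apply Theorem \ref{thm:key} afterwards. Since $\on{gl.dim}\Lambda = n+m$, the range $i>n+m$ is automatic, and for $0<i<n+m$ the vanishing is immediate from Proposition \ref{prop:rigid}, because both $T$ and $X$ lie in $\mathcal M$.

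The only case with content is therefore $i = n+m$. Here I would reduce to indecomposable summands: write an indecomposable summand of $T$ as $M\otimes N$ and an indecomposable of $\mathcal M$ as $M'\otimes N'$, where every indecomposable of $\mathcal M$ has this form and, using $D\Lambda = DA\otimes DB$ and $\tau_{n+m}^i D\Lambda = \tau_n^i DA\otimes \tau_m^i DB$, one has $M, M'\in\mathcal M(A)$ and $N, N'\in\mathcal M(B)$. By Lemma \ref{lem:kunnethext}, together with $\on{gl.dim}A = n$ and $\on{gl.dim}B = m$, the only surviving Künneth summand in degree $n+m$ is
$$\on{Ext}^{n+m}_\Lambda(M\otimes N, M'\otimes N')\cong \on{Ext}^n_A(M, M')\otimes\on{Ext}^m_B(N, N').$$
Since $M\otimes N\in\mathcal P$ we have $0 = \tau_{n+m}(M\otimes N) = \tau_n M\otimes\tau_m N$, so one of the two factors vanishes; say $\tau_n M = 0$, whence $M\in\mathcal P(A) = \on{add}T_A$.

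To finish I would invoke the inductive data coming from $A$ being $n$-complete: condition $(B_n)$ says that $\mathcal M(A)$ is $n$-cluster tilting in $T_A^\perp$, so in particular $M'\in\mathcal M(A)\subseteq T_A^\perp$. As $M\in\on{add}T_A$, the space $\on{Ext}^n_A(M, M')$ is a direct summand of a finite sum of copies of $\on{Ext}^n_A(T_A, M')$, which is $0$ because $M'\in T_A^\perp$ and $n\neq 0$; the symmetric computation handles the case $\tau_m N = 0$ via $\on{Ext}^m_B(N, N') = 0$. In either case the tensor product above vanishes, so $\on{Ext}^{n+m}_\Lambda(T, X) = 0$ and $\mathcal M\subseteq T^\perp$.

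I expect no genuine obstacle here, only bookkeeping: the one point that must be handled carefully is that the tensor decomposition of an indecomposable of $\mathcal M(\Lambda)$ is compatible with the slice structures of $A$ and $B$, so that the factors are honestly objects of $\mathcal M(A)$ and $\mathcal M(B)$ and not merely of $\on{mod}A$ and $\on{mod}B$. This is what licenses the step $\tau_n M = 0 \Rightarrow M\in\on{add}T_A$ and the subsequent appeal to $(B_n)$. Notably, no higher Auslander–Reiten duality is required: the top-degree vanishing is free once one observes that the relative-projective summands of $T$ land in $\on{add}T_A$ (or $\on{add}T_B$), against which $\mathcal M(A)$ (resp. $\mathcal M(B)$) is already $\on{Ext}^n$-orthogonal (resp. $\on{Ext}^m$-orthogonal) by the completeness of the factors.
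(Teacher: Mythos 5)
Your proof is correct and follows essentially the same route as the paper: reduce to the top degree $i=n+m$ via Proposition \ref{prop:rigid}, apply the K\"unneth formula to indecomposable summands, and observe that a summand $M\otimes N$ of $T$ has one tensor factor in $\on{add}T_A$ or $\on{add}T_B$ (what the paper phrases as being relative projective in $T_A^\perp$ resp.\ $T_B^\perp$), which kills the corresponding $\on{Ext}^n_A$ or $\on{Ext}^m_B$ factor. Your version merely spells out the bookkeeping (the other degree ranges, and why the factors of an indecomposable of $\mathcal M(\Lambda)$ lie in $\mathcal M(A)$ and $\mathcal M(B)$) that the paper leaves implicit.
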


\begin{proof}
  By Proposition \ref{prop:rigid}, it is enough to check that $\on{Ext}^{n+m}(T , \mathcal M) = 0$. Let $M_1\otimes M_2\in \on{add}T$. 
  Then either $M_1$ or $M_2$ is relative projective in $T_A^\perp$ respectively $T_B^\perp$, so 
  \begin{align*}
    \on{Ext}^{n+m}(M_1\otimes M_2, N_1\otimes N_2) = \on{Ext}^n(M_1, N_1)\otimes \on{Ext}^m(M_2, N_2) = 0
  \end{align*}
  for any $N_1\otimes N_2\in \mathcal M$.
\end{proof}

\begin{thm}
  \label{thm:B_d}
  $\mathcal M$ is an $(n+m)$-cluster tilting subcategory of $T^\perp$.
\end{thm}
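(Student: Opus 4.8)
The plan is to deduce the statement from the criterion of Theorem \ref{thm:key}, applied with $\Lambda = A\otimes B$, $\mathcal C = \mathcal M$, $T = T_{A\otimes B}$ and $d = n+m$. First I would check that all the standing hypotheses of that theorem are in place. The module $T$ is tilting by Theorem \ref{thm:A_d}, and $\on{proj.dim}T\leq n+m$ holds automatically since $\on{gl.dim}\Lambda = n+m$. The category $\mathcal M$ is of the form $\on{add}C$ for a single additive generator $C$ (as remarked after Lemma \ref{lem:C_d}, $\tau_{n+m}^lD\Lambda = 0$ for $l\gg 0$), it is contained in $T^\perp$ by Lemma \ref{lem:perp}, and it is rigid in the required range, $\on{Ext}^i_\Lambda(\mathcal M,\mathcal M) = 0$ for $0<i<n+m$, by Proposition \ref{prop:rigid}. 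Finally $T\oplus D\Lambda\in\mathcal M$, since $D\Lambda\in\mathcal M$ by definition of $\mathcal M$ and $T\in\mathcal P\subseteq\mathcal M$. Thus the only thing left to verify is condition (2) of Theorem \ref{thm:key}.

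The substance of the proof is therefore to exhibit, for every indecomposable $X\in\mathcal M$, a source sequence $X\to C_{n+m}\to\cdots\to C_0\to 0$ with all $C_i\in\mathcal M$. I would split into two cases according to whether $X$ is injective, these two cases exhausting $\on{ind}\mathcal M$. If $X$ is non-injective, i.e.\ $X\in\mathcal M_I$, then Corollary \ref{cor:sequences} provides an $(n+m)$-almost split sequence in $\mathcal M$
\[
0\to X\to C_{n+m}\to\cdots\to C_1\to \tau_{n+m}^-X\to 0.
\]
Discarding the leading zero and setting $C_0 = \tau_{n+m}^-X$ yields a source sequence of $X$ of exactly the required shape, with every term in $\mathcal M$. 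If instead $X$ is injective, then $X = X_A\otimes X_B$ with $X_A$ and $X_B$ injective, and Proposition \ref{prop:source} directly supplies a source sequence in $\mathcal M$
\[
X\to E_{n+m}\to\cdots\to E_1\to 0,
\]
which has the required form with $C_0 = 0$. In both cases the source sequence lies in $\mathcal M$, so condition (2) holds for every indecomposable of $\mathcal M$, and Theorem \ref{thm:key} then gives that $\mathcal M$ is an $(n+m)$-cluster tilting subcategory of $T^\perp$.

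The genuine difficulty has been front-loaded into the auxiliary results rather than into this theorem: the existence of the two families of sequences is exactly what Corollary \ref{cor:sequences} and Proposition \ref{prop:source} establish, while acyclicity was the essential ingredient in proving that $T$ is tilting (through Lemma \ref{lem:generate}, where directedness is used to induct on heights). Granting these, the present statement is an assembly step. The only points requiring care are bookkeeping ones: matching the index conventions of the source sequences to the shape $X\to C_{n+m}\to\cdots\to C_0\to 0$ demanded by Theorem \ref{thm:key} (the injective case being one term shorter, so that $C_0 = 0$), and confirming that the injective and non-injective indecomposables together exhaust $\on{ind}\mathcal M$ so that condition (2) is verified for all of $\mathcal M$.
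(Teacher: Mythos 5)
Your proof is correct and follows exactly the route of the paper: verify the hypotheses of Theorem \ref{thm:key} via Proposition \ref{prop:rigid} and Lemma \ref{lem:perp}, then obtain condition (2) from Corollary \ref{cor:sequences} for non-injective indecomposables and Proposition \ref{prop:source} for injective ones. The paper states this in three lines; your version just makes the hypothesis-checking and the case split explicit.
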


\begin{proof}
  By Proposition \ref{prop:rigid} and Lemma \ref{lem:perp}, we can take $\Lambda = A\otimes B$, $\mathcal C = \mathcal M$, $T = T_{A\otimes B}$  and $d= n+m$ in the assumptions of Theorem \ref{thm:key}.
  By Corollary \ref{cor:sequences} and Proposition \ref{prop:source}, condition $(2)$ is satisfied.
  Our claim is then the equivalent statement $(1)$.
\end{proof}

Now we have established everything we need to prove the main result.
\begin{proof}[Proof of Theorem \ref{thm:main}]
  By Theorem \ref{thm:A_d}, Theorem \ref{thm:B_d}, and Lemma \ref{lem:C_d}, we have that $A\otimes B$ satisfies the conditions $(A_{n+m}), (B_{n+m}),(C_{n+m}) $ in the definition of $(n+m)$-complete algebra.
  By Lemma \ref{lem:product}, $A\otimes B$ is acyclic.
\end{proof}

\begin{proof}[Proof of Corollary \ref{cor:recover}]
  By Theorem \ref{thm:main}, $A\otimes B$ is $(n+m)$-complete.
  By \cite[Proposition 1.13]{Iya11}, we have that $T_A\cong A$, $T_B\cong B$ and that $A\otimes B$ is $(n+m)$-representation finite if and only if $T_{A\otimes B} \cong A\otimes B$. 
  By Lemma \ref{lem:homog}, this happens if and only if $A$ and $B$ are $l$-homogeneous for some common $l$.
\end{proof}

\section{Examples}

Let us consider one of the simplest non-trivial examples. Let $A = B = kQ$, where $Q$ is the quiver
\begin{align*} 
  \xymatrix{
    1 & 2.\ar[l]
  }
\end{align*}

Then $\Lambda = A\otimes B$ is the quiver algebra of a commutative square. This algebra is 2-complete, since the factors are 1-representation finite. It is not 2-representation finite since the factors are not homogeneous.
However, $\Lambda$ is representation finite, so we can draw the entire Auslander-Reiten quiver of $\Lambda$. We represent modules by their dimension vector.

$$
  \xymatrix{
  & *{\begin{smallmatrix}
    0&0\\1&1
  \end{smallmatrix}}
  \ar[rd]
  & 
  & *{\begin{smallmatrix}
    0&1\\0&0
  \end{smallmatrix}}
  \ar[dr]
  & 
  & *{\begin{smallmatrix}
    1&0\\1&0
  \end{smallmatrix}}
  \ar[dr]
  \\
    *{\begin{smallmatrix}
      0&0\\1&0
    \end{smallmatrix}}
    \ar[ur]\ar[dr]
    &
    &*{\begin{smallmatrix}
      1&0\\1&1
    \end{smallmatrix}}
    \ar[r]\ar[ur]\ar[dr]
    &
    *{\begin{smallmatrix}
    1&1\\1&1
  \end{smallmatrix}}
  \ar[r]
  &*{\begin{smallmatrix}
  1&1\\0&1
\end{smallmatrix}}
\ar[dr]\ar[ur]
&
& *{\begin{smallmatrix}
0&1\\0&0
\end{smallmatrix}}
\\
  & *{\begin{smallmatrix}
    1&0\\1&0
  \end{smallmatrix}}
  \ar[ru]
  & 
  & *{\begin{smallmatrix}
    0&0\\0&1
  \end{smallmatrix}}
  \ar[ur]
  & 
  & *{\begin{smallmatrix}
    1&1\\0&0
  \end{smallmatrix}}
  \ar[ur]
  }
$$
\vspace{.5cm}

In this case, 
\begin{align*}
  T = 
\begin{smallmatrix}
  0&0\\1&0
\end{smallmatrix} 
\oplus
\begin{smallmatrix}
  1&1\\1&1
\end{smallmatrix}
\oplus \begin{smallmatrix}
  0&1\\0&1
\end{smallmatrix}
\oplus
\begin{smallmatrix}
  1&1\\0&0
\end{smallmatrix}
\end{align*}

and $$\mathcal M = \on{add}M = \on{add}(T\oplus 
\begin{smallmatrix}
  0&1\\0&0
\end{smallmatrix}).$$ 

One can explicitly compute all $\on{Ext}$-groups of all pairs of indecomposables, since we have only finitely many. If we represent by $\otimes$ the indecomposables in $\on{add}T$, 
by $\odot$ the ones in $\mathcal M$ but not in $\on{add}T$, by $\blacksquare$ the ones in $T^\perp$ but not in $\mathcal M$, and by $\cdot$ the ones outside $T^\perp$, we get the following picture:
\begin{align*}
  \xymatrix{
    & \cdot\ar[dr]& &\cdot\ar[dr] & &\otimes\ar[dr] &\\
    \otimes \ar[ur]\ar[dr]& &\blacksquare\ar[ur]\ar[dr]\ar[r] & \otimes\ar[r]&\blacksquare \ar[ur]\ar[dr]& &\odot\\
    & \cdot\ar[ur]& &\cdot\ar[ur] & &\otimes\ar[ur] &
  }
\end{align*}
It can be checked that both the indecomposable modules in $ T^\perp\setminus \mathcal M$ have extensions with $M$ on both sides, as it is required by the definition of 
2-cluster tilting. Here we find that $\mathcal M$ is 2-cluster tilting in $T^\perp$.

The Auslander-Reiten quiver of $\on{add}(M)$ is given by 
\begin{align*}
  \xymatrix{
  &&*{\begin{smallmatrix}
  0&1\\0&1
\end{smallmatrix}}\ar[dr]&\\
    *{\begin{smallmatrix}
    0&0\\1&0
  \end{smallmatrix}}\ar[r]
  & 
  *{\begin{smallmatrix}
  1&1\\1&1
\end{smallmatrix}}
  \ar[ur]\ar[dr]& 
  & *{\begin{smallmatrix}
  0&1\\0&0
\end{smallmatrix}}\\
&& *{\begin{smallmatrix}
1&1\\0&0
\end{smallmatrix}}\ar[ur]&
 }
\end{align*}
and this is also a picture of the only 2-almost split sequence we have.

As a second example, consider the quiver $Q'$:
\begin{align*}
  \xymatrix{
    &&2\ar[dl]\\
    3\ar[r]& 1&\\
    &&4\ar[ul]
  }
\end{align*}
and the corresponding path algebra $A' = kQ'$. The Auslander-Reiten quiver of $A'$ looks like
\begin{align*}
  \xymatrix{
    &P_2\ar[dr]&&N_2\ar[dr]&&I_2\\
    P_1\ar[ur]\ar[r]\ar[dr]&P_3\ar[r]&N_1\ar[r]\ar[ur]\ar[dr]& N_3\ar[r]& I_1\ar[r]\ar[ur]\ar[dr]&I_3\\
    &P_4\ar[ur]&&N_4\ar[ur]&&I_4\\
  }
\end{align*}
We take $B'= kQ''$, where $Q''$ is the quiver
\begin{align*}
  \xymatrix{
    a& b\ar[l]&c\ar[l].
  }
\end{align*}
The Auslander-Reiten quiver of $B'$ looks like
\begin{align*}
  \xymatrix{
    &&P_c = I_a\ar[dr]&&\\
    &P_b\ar[dr]\ar[ur]&&I_b\ar[dr]\\
    P_a\ar[ur]&&S_b\ar[ur]&&I_c
  }
\end{align*}

These algebras are both $1$-representation finite, so in particular they are $1$-complete. Their tensor product $\Lambda' = A'\otimes B'$ is therefore $2$-complete. It is 
not $2$-representation finite since $B'$ is not homogeneous.
In this example, we cannot draw the entire module category of $\Lambda'$, but we still have complete control over
the ``higher Auslander-Reiten quiver'' of $\Lambda'$, that is the Auslander-Reiten quiver of $\on{add}(M)$:
\vspace{.5cm}

\begin{center}
\begin{tikzpicture}[
    x = {(0.5cm, 0)}, y = {(0, 0.3cm)}, z = {(.7cm, -1cm)},
    scale = 2.3, 
    background/.style = { thin }
  ]
  \node (1) at (0, 0, 0){$\otimes$};
  \node (2) at (1, 1, 0){$\otimes$};
  \node (3) at (1, 0, 0){$\otimes$};
  \node (4) at (1, -1, 0){$\otimes$};

  \node (5) at (2, 0, 0){$\otimes$};
  \node (6) at (3, 1, 0){$\otimes$};
  \node (7) at (3, 0, 0){$\otimes$};
  \node (8) at (3, -1, 0){$\otimes$};

  \node (9) at (4, 0, 0){$\otimes$};
  \node (10) at (5, 1, 0){$\otimes$};
  \node (11) at (5, 0, 0){$\otimes$};
  \node (12) at (5, -1, 0){$\otimes$};

  \node (13) at (2, 0, 1){$\odot$};
  \node (14) at (3, 1, 1){$\odot$};
  \node (15) at (3, 0, 1){$\odot$};
  \node (16) at (3, -1, 1){$\odot$};

  \node (17) at (4, 0, 1){$\odot$};
  \node (18) at (5, 1, 1){$\odot$};
  \node (19) at (5, 0, 1){$\odot$};
  \node (20) at (5, -1, 1){$\odot$};

  \node (21) at (4, 0, 2){$\odot$};
  \node (22) at (5, 1, 2){$\odot$};
  \node (23) at (5, 0, 2){$\odot$};
  \node (24) at (5, -1, 2){$\odot$};

  \path[<-](1)edge[dashed,red, bend right] (13);
  \path[<-](4)edge[dashed,red, bend right] (16);
  \path[<-](13)edge[dashed,red, bend right] (21);
  \path[<-](16)edge[dashed,red, bend right] (24);


  \path[->] (1) edge (2) edge (3) edge (4);
  \path[<-] (5) edge(2) edge (3) edge (4);

  \path[->] (5)edge(6)edge(7) edge (8);
  \path[<-] (9)edge(6)edge(7) edge (8);

  \path[->] (9)edge(10)edge(11) edge (12);

  \path[->] (13) edge (14) edge (15) edge (16);
  \path[<-] (17) edge (14) edge (15) edge (16);

  \path[->] (17)edge(18)edge(19) edge (20);

  \path[->] (21)edge(22)edge(23) edge (24);

  \path[->] (5) edge (13);
  \path[->] (6) edge (14);
  \path[->] (7) edge (15);
  \path[->] (8) edge (16);
 
  \path[->] (9) edge (17);
  \path[->] (10) edge (18);
  \path[->] (11) edge (19);
  \path[->] (12) edge (20);

  \path[->] (17) edge (21);
  \path[->] (18) edge (22);
  \path[->] (19) edge (23);
  \path[->] (20) edge (24); 

 \path[->] (21)edge[-,line width=6pt,draw=white](22)edge[-,line width=6pt,draw=white](23) edge[-,line width=6pt,draw=white] (24);
  \path[->] (21)edge(22)edge(23) edge (24);

  \path[->] (17) edge[-,line width=6pt,draw=white]  (21);
  \path[->] (18) edge[-,line width=6pt,draw=white]  (22);
  \path[->] (19) edge[-,line width=6pt,draw=white]  (23);
  \path[->] (20) edge[-,line width=6pt,draw=white]  (24);
  \path[->] (17) edge (21);
  \path[->] (18) edge (22);
  \path[->] (19) edge (23);
  \path[->] (20) edge (24);

  \path[->] (13) edge[-,line width=6pt,draw=white] (14) edge[-,line width=6pt,draw=white] (15) edge[-,line width=6pt,draw=white] (16);
  \path[<-] (17) edge[-,line width=6pt,draw=white] (14) edge[-,line width=6pt,draw=white] (15) edge[-,line width=6pt,draw=white] (16);
  \path[->] (17)edge[-,line width=6pt,draw=white](18)edge[-,line width=6pt,draw=white](19) edge[-,line width=6pt,draw=white] (20);
  \path[->] (13) edge (14) edge (15) edge (16);
  \path[<-] (17) edge (14) edge (15) edge (16);
  \path[->] (17)edge(18)edge(19) edge (20); 

  \path[->] (5) edge[-,line width=6pt,draw=white] (13);
  \path[->] (6) edge[-,line width=6pt,draw=white]  (14);
  \path[->] (7) edge[-,line width=6pt,draw=white]  (15);
  \path[->] (8) edge[-,line width=6pt,draw=white]  (16);
  \path[->] (5) edge (13);
  \path[->] (6) edge (14);
  \path[->] (7) edge (15);
  \path[->] (8) edge (16);
  \path[->] (9) edge[-,line width=6pt,draw=white]  (17);
  \path[->] (10) edge[-,line width=6pt,draw=white]  (18);
  \path[->] (11) edge[-,line width=6pt,draw=white]  (19);
  \path[->] (12) edge[-,line width=6pt,draw=white]  (20);
  \path[->] (9) edge (17);
  \path[->] (10) edge (18);
  \path[->] (11) edge (19);
  \path[->] (12) edge (20);
 
  \path[->] (1) edge[-,line width=6pt,draw=white] (2) edge[-,line width=6pt,draw=white] (3) edge[-,line width=6pt,draw=white] (4);
  \path[<-] (5) edge[-,line width=6pt,draw=white](2) edge[-,line width=6pt,draw=white] (3) edge[-,line width=6pt,draw=white] (4);
  \path[->] (5)edge[-,line width=6pt,draw=white](6)edge[-,line width=6pt,draw=white](7) edge[-,line width=6pt,draw=white] (8);
  \path[<-] (9)edge[-,line width=6pt,draw=white](6)edge[-,line width=6pt,draw=white](7) edge[-,line width=6pt,draw=white] (8);
  \path[->] (9)edge[-,line width=6pt,draw=white](10)edge[-,line width=6pt,draw=white](11) edge[-,line width=6pt,draw=white] (12);
  \path[->] (1) edge (2) edge (3) edge (4);
  \path[<-] (5) edge(2) edge (3) edge (4);
  \path[->] (5)edge(6)edge(7) edge (8);
  \path[<-] (9)edge(6)edge(7) edge (8);
  \path[->] (9)edge(10)edge(11) edge (12); 

\end{tikzpicture}
\end{center}
Here the dashed arrows represent $\tau_2$, and we have drawn them only between some modules to avoid clogging the picture. We have again written $\otimes$ for indecomposable summands of $T$, and $\odot$ for the other
indecomposable summands of 
$M$. It should be clear from the picture which module corresponds to which node.

Notice that this example presents some regularity which is not to be expected in general, since we have taken $A'$ to be homogeneous.
Moreover, in this example (and in general) we cannot directly check that arbitrary modules in $\on{mod}\Lambda'$ which are in $T^\perp$ have extensions on both sides with $\mathcal M$.
\vspace{1cm}

\paragraph{\textbf{Acknowledgement.}} 
I am especially thankful to his advisor Martin Herschend for the many helpful comments and suggestions. I would also like to thank an anonymus referee for his or her comments.

\bibliographystyle{alpha}
\bibliography{Bibliography}{}

\end{document}